\mathchardef\ordinarycolon\mathcode`\:
\begin{document}

\title{A Hadamard theorem in transversely affine geometry with applications to affine orbifolds}

\author{Francisco C.~Caramello Jr.}
\address{Departamento de Matemática, Universidade Federal de Santa Catarina, R. Eng. Agr. Andrei Cristian Ferreira, 88040-900, Florianópolis - SC, Brazil}
\email{francisco.caramello@ufsc.br}

\author{H. A. Puel Martins}
\address{Departamento de Matemática, Universidade Federal de Santa Catarina, R. Eng. Agr. Andrei Cristian Ferreira, 88040-900, Florianópolis - SC, Brazil}
\email{henrique.martins@posgrad.ufsc.br}

\author{Ivan P. Costa e Silva}
\address{Departamento de Matemática, Universidade Federal de Santa Catarina, R. Eng. Agr. Andrei Cristian Ferreira, 88040-900, Florianópolis - SC, Brazil}
\email{pontual.ivan@ufsc.br}

\subjclass[2020]{53C12, 53C50}

\newenvironment{proofoutline}{\proof[Proof outline]}{\endproof}
\newenvironment{proofcomment}{\proof[Comment on the proof]}{\endproof}
\theoremstyle{definition}
\newtheorem{example}{Example}[section]
\newtheorem{definition}[example]{Definition}
\newtheorem{remark}[example]{Remark}
\theoremstyle{plain}
\newtheorem{proposition}[example]{Proposition}
\newtheorem{theorem}[example]{Theorem}
\newtheorem{lemma}[example]{Lemma}
\newtheorem{corollary}[example]{Corollary}
\newtheorem{claim}[example]{Claim}
\newtheorem{conjecture}[example]{Conjecture}
\newtheorem{thmx}{Theorem}
\renewcommand{\thethmx}{\Alph{thmx}} 
\newtheorem{corx}[thmx]{Corollary}
\renewcommand{\thecorx}{\Alph{corx}} 

\newcommand{\dif}[0]{\mathrm{d}}
\newcommand{\od}[2]{\frac{\dif #1}{\dif #2}}
\newcommand{\pd}[2]{\frac{\partial #1}{\partial #2}}
\newcommand{\dcov}[2]{\frac{\nabla #1}{\dif #2}}
\newcommand{\proin}[2]{\left\langle #1, #2 \right\rangle}
\newcommand{\f}[0]{\mathcal{F}}
\newcommand{\g}[0]{\mathcal{G}}
\newcommand{\metric}{\ensuremath{\mathrm{g}}}
\newcommand{\qcd}{\begin{flushright} $\Box$ \end{flushright}}
\begin{abstract}
We introduce and investigate a novel notion of transversely affine foliation, comparing and contrasting it to the previous ones in the literature. We then use it to give an extension of the classic Hadamard’s theorem from Riemannian geometry to this setting. Our main result is a transversely affine version of a well-known ``Hadamard-like'' theorem by J. Hebda for Riemannian foliations. Alternatively, our result can be viewed as a foliation-theoretic analogue of the Hadamard's theorem for affine manifolds proven by Beem and Parker. Namely, we show that under the transverse analogs of pseudoconvexity and disprisonment for the family of geodesics in the transverse affine geometry, together with an absence of transverse conjugate points, the universal cover of a manifold endowed with a transversely affine foliation whose leaves are compact and with finite holonomy is diffeomorphic to the product of a contractible manifold with the universal cover of a leaf. This also leads to a Beem--Parker-type Hadamard-like theorem for affine orbifolds.
\end{abstract}

\maketitle
\setcounter{tocdepth}{1}
\tableofcontents

\section{Introduction}


Let $(M,h)$ be a connected Riemannian manifold. The classic Hadamard's theorem in Riemannian geometry \cite{oneillbook,petersen} states that the exponential map $\exp^h_p:T_pM \rightarrow M$ at any point $p\in M$ is a (universal) covering map, provided $(M,h)$ is geodesically complete and its sectional curvature is nonpositive. Since this result implies such a nice control over the underlying manifold's topology, it naturally motivates attempts of generalizations to a broader class of geometries --- say  semi-Riemannian or affine manifolds --- where the notion of an exponential map still makes sense. 

At first, such attempts would seem misguided, because geodesic completeness and nonpositivity of sectional curvatures, which are natural in Riemannian geometry (see. e.g., \cite{morrow,NO}) are much less so in these broader contexts. For example, it is well-known that semi-Riemannian metrics do not, in general, define any distance function, and no analogue of the Hopf-Rinow theorem exists \cite{beem,oneillbook}. In particular, compactness of the underlying manifold does not imply either geodesic completeness or geodesic connectedness in this context \cite[Example 7.16, Exercise 9.12]{oneillbook}. In addition, in the semi-Riemannian context it is well-known that both geodesic completeness and how the existence of conjugate points is affected by sectional curvatures depend on the so-called \textit{causal character} of the geodesics \cite{beemincomplete}, and furthermore many of the most interesting geometric examples in Lorentzian geometry --- arguably the most important semi-Riemannian type of geometry apart from the Riemannian one --- are geodesically incomplete \cite[Ch. 5]{hawking-ellis}. 

These issues notwithstanding, there have been a number of concrete alternative results in these broader contexts that still can be regard as ``Hadamard-like'' theorems. For example, Flaherty \cite{flaherty}, studying the particular case of Lorentzian manifolds, applied the hypotheses of geodesic completeness and sign for the sectional curvatures only in timelike directions, though at the cost of introducing the fairly strong hypothesis of \textit{future 1-connectedness}. i.e., that any two timelike curves starting and ending at the same points can be homotopically deformed through timelike intermediate curves.

\begin{theorem}[Flaherty]
Let $M$ be a time-oriented, causally future-complete Lorentzian manifold which is also future 1-connected. Further suppose that the sectional curvature is nonpositive for all timelike planes. Then the exponential map regularly embeds the future timecone at each point into $M$.
\end{theorem}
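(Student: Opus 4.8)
The plan is to follow the architecture of the classical Cartan--Hadamard argument, but confined to the future timecone and with the three Riemannian ingredients (completeness, curvature sign, simple connectivity of the domain) replaced by their timelike counterparts. Fix $p\in M$ and write $C^+_p\subset T_pM$ for the open convex cone of future-directed timelike vectors; being a convex cone, $C^+_p$ is contractible, in particular simply connected. I would organize the proof into three steps: (i) $\exp_p$ is defined on all of $C^+_p$; (ii) $\exp_p|_{C^+_p}$ is a local diffeomorphism; (iii) $\exp_p|_{C^+_p}$ is injective. Once (i)--(iii) are in place one appeals to the elementary fact that an injective local diffeomorphism is automatically an open map, hence a homeomorphism --- indeed a diffeomorphism --- onto its (open) image; this is exactly the assertion that $\exp_p$ regularly embeds the future timecone into $M$.

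For step (i), causal future-completeness says precisely that every future-directed timelike geodesic issuing from $p$ is defined for all positive affine parameters; in particular $\gamma_v(t)=\exp_p(tv)$ is defined at $t=1$ for every $v\in C^+_p$, so the domain of $\exp_p$ contains $C^+_p$. For step (ii), I would use that $d(\exp_p)_v$ fails to be an isomorphism exactly when $\exp_p(v)$ is conjugate to $p$ along $\gamma_v$. Along a future-timelike geodesic the normal bundle is spacelike, so the relevant Jacobi fields take values in a positive-definite subspace and the conjugate-point analysis reduces, via the index form, to the sectional curvatures of the timelike planes spanned by $\gamma'$ and a normal direction. The hypothesis that the sectional curvature is nonpositive on all such planes is the timelike analogue of the Riemannian nonpositivity condition and, through the index form, prevents focusing; hence no future-timelike geodesic from $p$ carries a conjugate point, $d(\exp_p)_v$ is an isomorphism for all $v\in C^+_p$, and $\exp_p|_{C^+_p}$ is a local diffeomorphism.

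Step (iii) is where future 1-connectedness does the work, and it is the step I expect to be the main obstacle. Suppose $\exp_p(v_0)=\exp_p(v_1)=q$ with $v_0,v_1\in C^+_p$; then $s\mapsto\exp_p(sv_i)$ are two future-timelike geodesics from $p$ to $q$. Future 1-connectedness provides a homotopy $H\colon[0,1]\times[0,1]\to M$ between them through future-timelike curves fixing the endpoints $p$ and $q$. The aim is to lift each cross-section $s\mapsto H(s,u)$ through the local diffeomorphism $\exp_p$, starting from the origin of $T_pM$, to a curve $\widetilde{H}(\cdot,u)$ in $C^+_p$ with $\widetilde{H}(\cdot,0)=sv_0$ and $\widetilde{H}(\cdot,1)=sv_1$ (the latter by uniqueness of lifts). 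Then the endpoint track $u\mapsto\widetilde{H}(1,u)$ lies in the discrete fiber $\exp_p^{-1}(q)$ and depends continuously on $u$, hence is constant, whence $v_0=\widetilde{H}(1,0)=\widetilde{H}(1,1)=v_1$ and injectivity follows. The delicate point is to guarantee that such lifts exist for every parameter value: that the set of $u$ for which $H(\cdot,u)$ lifts on all of $[0,1]$ is open, closed, and nonempty. Openness and nonemptiness are local consequences of $\exp_p$ being a local diffeomorphism, while closedness is where one must prevent the partial lifts from escaping the timecone or failing to converge --- and this is precisely controlled by the timelike character of the intermediate curves $H(\cdot,u)$ together with causal future-completeness, which keeps the lifted timelike variations defined up to their terminal parameter. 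Assembling these, $\exp_p|_{C^+_p}$ is an injective local diffeomorphism, and by the elementary fact noted at the outset it is a regular embedding of the future timecone into $M$.
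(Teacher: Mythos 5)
First, a contextual point: the paper does not prove this statement at all --- it is quoted from Flaherty's 1975 article purely as motivation for the Beem--Parker and Hebda theorems, and the only ``proof'' in the source is the citation. So your attempt can only be measured against Flaherty's original argument, whose architecture your outline does reproduce faithfully: domain of $\exp_p$ on the timecone from completeness, maximal rank from the curvature hypothesis, injectivity from future 1-connectedness via a homotopy-lifting argument, and the elementary observation that an injective local diffeomorphism is a diffeomorphism onto its open image.

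That said, two points in your outline are not yet proofs. The lesser one is in step (ii): on a timelike plane the quantity $\langle v,v\rangle\langle w,w\rangle-\langle v,w\rangle^2$ is \emph{negative}, so the link between the sign of timelike sectional curvature and focusing is reversed relative to the Riemannian case under the most common convention (indeed, with that convention, nonpositivity of $K$ on all timelike planes implies the timelike convergence condition $\mathrm{Ric}(\gamma',\gamma')\geq 0$, which \emph{produces} conjugate points). You must verify that ``nonpositive'' in Flaherty's convention is the non-focusing sign before invoking the index form; calling it ``the timelike analogue of the Riemannian nonpositivity condition'' papers over exactly the step that needs checking. The serious gap is in step (iii): the closedness of the set of parameters $u$ for which $H(\cdot,u)$ lifts is the entire mathematical content of the theorem, and you only assert that it is ``controlled by'' the timelike character of the intermediate curves together with causal completeness. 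To close it one must show that a maximal partial lift $\widetilde{H}(\cdot,u)\colon[0,s_0)\to C^+_p$ converges as $s\to s_0^-$ to a point of $C^+_p\cup\{0\}$ rather than escaping to infinity or to the boundary of the cone; the standard device is to pull the metric back to $C^+_p$ via $\exp_p$, check that $(C^+_p,\exp_p^{*}g)$ is future causally complete (its radial curves are complete timelike geodesics), and deduce convergence of the lifted future-timelike curve from completeness of this pullback geometry. Without that argument, openness plus nonemptiness does not suffice. There is also the minor wrinkle, which you should address explicitly, that the lifts are required to start at $0\in\partial C^+_p$, which is not in the open cone on which the local-diffeomorphism property is being used, so the lifting must be performed on $(0,1]$ and extended continuously to $s=0$.
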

\noindent Observe that having hypotheses only in the timelike directions entails that the result affects only timecones, a kind of ``partial covering map'' result. 

A different approach was taken by Beem and Parker \cite{beemparker}, who obtained a result applicable to the broader class of affine manifolds $(M,\nabla)$. As hypotheses, they directly impose the absence of pairs of conjugate points along all geodesics --- a property that follows from nonpositivity of sectional curvatures in the Riemannian case --- and ensure a suitable path-lifting property in lieu of completeness, via the concepts of \textit{disprisonment} and \textit{pseudoconvexity} of the family of all geodesics of $(M,\nabla)$. These are standard geometric constraints meant to control the behavior of the geodesic flow and other dynamical systems. (See the references in \cite{beemparker} for an extensive list of classic applications). However, since (a transversal version of) disprisonment and pseudoconvexity will turn out to be important for us in this paper, we briefly recall the latter notions here. 

\begin{definition}
Let $(M,\nabla)$ be an affine manifold and consider a family $\mathcal{C}$ of non-constant $\nabla$-geodesics. We say that $\mathcal{C}$ is
\begin{itemize}
    \item \textit{pseudoconvex} if for every compact set $K\subset M$ there is another compact set $K^*\subset M$ such that if any segment of $\gamma\in\mathcal{C}$ has its endpoints in $K$, then that segment is entirely contained in $K^*$;
    \item \textit{disprisoning} if given the maximal extension $\gamma:(a,b)\rightarrow M$ of a geodesic in $\mathcal{C}$ and any $t_0\in(a,b)$, both $\gamma|_{(a,t_0]}$ and $\gamma|_{[t_0, b)}$ fail to have compact closure. 
\end{itemize}
\end{definition} 
\noindent If the class $\mathcal{C}$ of \textit{all} geodesics is pseudoconvex and disprisoning, then we say that $(M,\nabla)$ itself is pseudoconvex and disprisoning.

\begin{theorem}[Beem--Parker]
Let $(M^n,\nabla)$ be both pseudoconvex and disprisoning, and suppose that no inextendible geodesic therein has a pair of conjugate points. Then $(M,\nabla)$ is geodesically connected and, for each $p\in M$, the exponential map $\exp_p:\mathcal{D}_p\subset T_pM\rightarrow M$ is a smooth covering map. In particular, the universal covering manifold of $M$ is diffeomorphic to $\mathbb{R}^n$.  
\end{theorem}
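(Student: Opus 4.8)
The plan is to realize $\exp_p$ as the universal covering of $M$ by showing it is a local diffeomorphism possessing the path-lifting property, whose (contractible) domain $\mathcal{D}_p$ then plays the role that $T_pM$ plays in the classical Hadamard theorem. First I would record that $\mathcal{D}_p \subseteq T_pM$ is open and star-shaped about $0$: if $v \in \mathcal{D}_p$ then $sv \in \mathcal{D}_p$ for all $s \in [0,1]$, directly from the definition of the exponential domain. Consequently $\mathcal{D}_p$ is contractible, simply connected, and in fact diffeomorphic to $\mathbb{R}^n$, since every star-shaped open subset of $\mathbb{R}^n$ is. Next, the hypothesis on conjugate points enters exactly as in the Riemannian argument: because $d(\exp_p)_v$ is singular precisely when $\exp_p(v)$ is conjugate to $p$ along $s \mapsto \exp_p(sv)$, the absence of conjugate points forces $\exp_p$ to be a local diffeomorphism at every $v \in \mathcal{D}_p$.

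The core of the proof is to establish that $\exp_p$ has the path-lifting property, and this is where both standing hypotheses are indispensable. Given a path $\sigma \colon [0,1] \to M$ with $\sigma(0) = \exp_p(v_0)$, the local-diffeomorphism property lets me lift $\sigma$ uniquely over a maximal subinterval $[0,\beta)$ to $\tilde{\sigma}$ with $\tilde{\sigma}(0) = v_0$, and I must show this lift extends up to $\beta$. The only obstructions are that $\tilde{\sigma}(t)$ might run off to infinity in $T_pM$, or approach the boundary $\partial\mathcal{D}_p$, as $t \to \beta$. To control this I would consider the radial geodesics $c_t(s) = \exp_p(s\tilde{\sigma}(t))$, $s \in [0,1]$, joining $p$ to $\sigma(t)$; their endpoints lie in the compact set $K = \{p\} \cup \sigma([0,\beta])$, so pseudoconvexity confines all of them to a single compact set $K^*$. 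To rule out $|\tilde{\sigma}(t)| \to \infty$, I would renormalize the initial velocities with respect to an auxiliary inner product on $T_pM$ and extract a limiting direction $u$; a reparametrization shows the forward geodesic $\tau \mapsto \exp_p(\tau u)$ would then be trapped inside $K^*$ on its entire maximal interval, contradicting disprisonment.

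With $\tilde{\sigma}$ bounded, any sequence $t_k \to \beta$ admits a subsequence along which $\tilde{\sigma}(t_k) \to w \in \overline{\mathcal{D}_p}$. The remaining danger, which I expect to be the main obstacle, is that $w \in \partial\mathcal{D}_p$, i.e.\ the limiting geodesic $s \mapsto \exp_p(sw)$ breaks down before $s = 1$. I would dispatch this via a limit-geodesic lemma that is itself a consequence of pseudoconvexity together with disprisonment: a limit of geodesic segments joining points of a compact set, with convergent initial data, is again a geodesic segment defined over the full parameter interval and joining the limit endpoints. This yields $w \in \mathcal{D}_p$ with $\exp_p(w) = \sigma(\beta)$, so $w$ is interior; uniqueness of lifts then forces $\tilde{\sigma}(t) \to w$, and the local-diffeomorphism property extends the lift past $\beta$, contradicting maximality. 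Hence every path lifts, and the identical escape-prevention argument applied to the compact square $[0,1]^2$ yields homotopy lifting as well.

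Finally I would assemble the conclusions. Lifting a path from $p$ to an arbitrary $q \in M$ produces a point of $\mathcal{D}_p$ mapping to $q$, which gives surjectivity of $\exp_p$ and hence geodesic connectedness. A local diffeomorphism enjoying the path- and homotopy-lifting properties over a connected manifold is a covering map (its fibers are discrete, being a local homeomorphism), so $\exp_p \colon \mathcal{D}_p \to M$ is a smooth covering. Since $\mathcal{D}_p$ is simply connected it is the universal cover, and as it is diffeomorphic to $\mathbb{R}^n$, the universal covering manifold of $M$ is diffeomorphic to $\mathbb{R}^n$, as claimed.
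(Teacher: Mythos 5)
The paper only quotes this theorem from \cite{beemparker} and gives no proof of its own, so the comparison is against the original argument rather than anything in the text. Your proposal is correct and reconstructs essentially that argument: absence of conjugate points makes $\exp_p$ a local diffeomorphism on the open star-shaped domain $\mathcal{D}_p$, pseudoconvexity traps the radial geodesics beneath a lifted path in a single compact set, disprisonment excludes both escape to infinity of the lift and breakdown of the limit geodesic at $\partial\mathcal{D}_p$, and the resulting path- and homotopy-lifting properties upgrade the local diffeomorphism to a covering --- the same mechanism the present paper later packages as properness of the exponential map in Theorem \ref{pc+disp=proper}.
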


Our goal in this paper is to give a foliation-theoretic analogue of the Beem--Parker result described above. A key antecedent to our work is a well-known result by Hebda \cite{hebda}, who obtained a \textit{transverse} version of Hadamard's theorem for \emph{Riemannian} foliations, \textit{i.e.} a foliated manifold endowed with a Riemannian \textit{bundle-like metric}, which is a Riemannian metric $h$ such that $h(X,Y)$ is a basic function whenever $X,Y$ are projectable fields $g$-orthogonal to $\mathcal{F}$. 

\begin{theorem}[Hebda]
Let $(M,\mathcal{F})$ be a Riemannian, codimension $q$ foliation with a complete bundle-like metric $h$. If the sectional curvature function of $h$ is nonpositive, then the universal cover $\tilde{M}$ of $M$ fibers over a complete simply-connected Riemannian $q$-manifold $N$ of nonpositive curvature. Moreover, each fiber in this fibration is diffeomorphic to the universal covering of a leaf of $\mathcal{F}$. In particular, because the fibration is trivial since $N\simeq \mathbb{R}^q$ is contractible, $\tilde{M}$ is diffeomorphic to $\tilde{L}\times N$.
\end{theorem}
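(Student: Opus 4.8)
The plan is to run a transverse Cartan--Hadamard argument on the simply connected cover, transferring the classical mechanism (nonpositive curvature $\Rightarrow$ no conjugate points $\Rightarrow$ the exponential map is a covering) into the transverse geometry of $\mathcal F$. First I would lift everything to $\tilde M$: the pulled-back foliation $\tilde{\mathcal F}$, the bundle-like metric $\tilde h$ (still complete, since completeness is inherited by Riemannian covers), and the curvature hypothesis. I would then record the two structural facts about bundle-like metrics that make a transverse geodesic calculus available. A geodesic meeting a leaf orthogonally stays orthogonal to every leaf it meets, so the notion of a \emph{horizontal (transverse) geodesic} is well defined and, by completeness of $\tilde h$, horizontal geodesics are complete. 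Moreover, the normal bundle $Q = T\tilde M/T\tilde{\mathcal F}$ carries a transverse Levi--Civita connection whose curvature is the \emph{transverse sectional curvature}; it is this transverse curvature---the one entering the transverse Jacobi equation---on which I would read the nonpositivity hypothesis.

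Next I would develop the transverse Jacobi theory along horizontal geodesics. A transverse Jacobi field measures the first-order spreading of a family of nearby horizontal geodesics modulo the leaf directions, and it solves a transverse Jacobi equation driven by the transverse curvature. A transverse Rauch/Jacobi comparison then shows, exactly as in the classical nonpositive-curvature case, that under nonpositive transverse sectional curvature there are no \emph{transverse focal points} of any leaf along a horizontal geodesic; equivalently, the transverse (normal) exponential map of a leaf is nonsingular in the transverse directions, i.e. an infinitesimal submersion onto the local leaf space.

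The core step is to globalize this infinitesimal nondegeneracy into an honest fibration, and I expect it to be the main obstacle. The absence of transverse focal points makes the transverse exponential charts local diffeomorphisms onto pieces of the (a priori only local) leaf space, while completeness of the horizontal geodesics lets these charts extend along every horizontal direction without running out of domain. I would then assemble them---using simple connectivity of $\tilde M$ to kill the monodromy that could otherwise arise---into a global submersion $\pi\colon\tilde M\to N$ onto a smooth Hausdorff $q$-manifold $N$ whose fibers are exactly the leaves of $\tilde{\mathcal F}$; concretely this is a transverse version of the ``complete local isometry is a covering'' argument, and it simultaneously forces the leaves to be closed and embedded. This is exactly where the Hadamard phenomenon enters: for a general Riemannian foliation the leaf space of $\tilde{\mathcal F}$ need not be a manifold---the irrational linear foliation of $T^2$ has non-Hausdorff leaf space, although its lift to $\mathbb R^2$ does become simple---and it is nonpositive transverse curvature that excludes the transverse recurrence which would otherwise wreck the quotient.

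Finally I would descend the transverse metric to a genuine Riemannian metric on $N$; by construction its sectional curvature is the transverse sectional curvature, hence nonpositive, and projecting complete horizontal geodesics shows that $N$ is complete. The homotopy exact sequence of $\pi\colon\tilde M\to N$, with $\tilde M$ simply connected and connected fibers, gives $\pi_1(N)=0$, so Cartan--Hadamard applies and yields $N\cong\mathbb R^q$. Contractibility of $N$ trivializes the fibration, producing $\tilde M\cong \tilde L\times N$; and reading the homotopy sequence once more, now with $\pi_2(N)=0$, forces each fiber to be simply connected, hence to be the universal cover $\tilde L$ of a leaf, which completes the identification.
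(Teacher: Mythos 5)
A preliminary remark on scope: the paper does not prove this statement --- it is quoted from Hebda \cite{hebda} as background motivation --- so there is no internal proof to compare yours against. Judged on its own, your outline has the right global architecture (lift to $\tilde M$, invoke transnormality and completeness of horizontal geodesics, exclude transverse focal points, globalize to a fibration $\tilde M\to N$ over a manifold, finish with Cartan--Hadamard on $N$ and the homotopy exact sequence), and the endgame is sound: $\pi_1(\tilde M)=0$ forces $\pi_1(N)=0$, contractibility of $N$ trivializes the bundle, and exactness of $\pi_2(N)\to\pi_1(F)\to\pi_1(\tilde M)$ identifies the fiber with $\tilde L$.

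The genuine gap is the curvature step. The hypothesis is nonpositivity of the sectional curvature of the \emph{ambient} metric $h$, whereas the quantity driving the transverse Jacobi equation is the curvature of the transverse Levi--Civita connection, and O'Neill's formula relates the two by adding the nonnegative term $3\|A_XY\|^2/(\|X\|^2\|Y\|^2-\langle X,Y\rangle^2)$: the transverse sectional curvature \emph{dominates} the horizontal ambient one, so $K_h\le 0$ does not yield nonpositive transverse curvature, and your appeal to a transverse Rauch comparison ``exactly as in the classical case'' (as well as the later claim that $N$ inherits nonpositive curvature ``by construction'') is unjustified. Nor can one bypass this by arguing directly with focal points of the leaves under $K_h\le 0$: the focal index form carries the boundary term $-\langle S_{\gamma'(0)}v,v\rangle$ coming from the shape operator of the leaf, which has no definite sign (a circle in the flat plane has a focal point), so ``nonpositive ambient curvature implies no focal points'' is false for general submanifolds. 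Hebda's actual contribution is precisely this missing step: he exploits the structure of Riemannian foliations --- along a horizontal geodesic the leaves form a locally equidistant family, so their shape operators satisfy a matrix Riccati equation whose solutions cannot blow up when $K_h\le 0$ --- to conclude that no leaf of the lifted foliation has a focal point; everything downstream (focal points of leaves versus transverse conjugate points, the covering/fibration argument, the homotopy sequence) is then essentially as you describe. It is worth noting that the paper's own main theorem deliberately sidesteps exactly this difficulty in the affine setting by \emph{assuming} the absence of transverse conjugate points rather than deriving it from a curvature sign.
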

\noindent Observe that Hebda's result reduces to the standard Hadamard's theorem for a Riemannian manifold $(M,h)$ when the foliation is zero-dimensional, that is, when it is the trivial foliation of $M$ by its points. 

Our work can be alternatively construed as an affine-geometric version of Hebda's result. However, as we shall see in more detail below, there are in the literature many alternative, not necessarily equivalent notions of what a \textit{transversely affine foliation} should be. Therefore, we divide the paper in two parts. In the first part, we give an extended discussion of, and develop a new machinery to deal with, affine aspects of the transverse geometry of foliations. In particular, since we are interested in (a suitable notion) of transverse geodesics, instead of dealing with a connection on the normal bundle $\nu\mathcal{F}$ of the foliation $\f$ we introduce a novel concept of a connection on the underlying manifold itself with suitable ``bundle-like'' properties. Far from being a mere transcription of already known data into another ambiance, the introduction of the so-called \textit{transverse affine connections} and \textit{transverse affine structures} allow us to deal with perfect transverse analogues of classical objects of affine geometry such as parallel transport, geodesics, curvature and Jacobi fields. Since to the best of our knowledge much of this material is new, we discuss it in great detail here. 

Our main result, which we shall prove towards the end of the paper, is a mixture of the ``Hadamard-like'' Beem-Parker result we mentioned above with a conclusion resembling Hebda's:

\begin{thmx}
Let $(M,\mathcal{F})$ be a foliation whose leaves are all compact and such that $M/\mathcal{F}$ is Hausdorff. Suppose $(M,\mathcal{F}$) is endowed with a transverse affine structure such that the family of all non-vertical transverse-geodesics is transversely pseudoconvex, transversely disprisoning and without transverse conjugate points. Then the universal cover $\tilde{M}$ of $M$ splits diffeomorphically as a product $\tilde{L}\times B$, where $B$ is a contractible manifold and $\tilde{L}$ is the universal cover of some leaf $L$ of $\mathcal{F}$, so that the lifted foliation $\tilde{\f}$ is given by the fibers of $\tilde{M}\to B$.
\end{thmx}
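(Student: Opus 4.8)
The plan is to reduce the statement to a Beem--Parker theorem on the affine orbifold $M/\mathcal{F}$ and then lift the resulting product structure back up to $\tilde M$.

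\emph{Step 1 (the leaf space is an affine orbifold).} Since every leaf of $\mathcal{F}$ is compact and $W := M/\mathcal{F}$ is Hausdorff, the standard stability theory for compact-leaf foliations forces the holonomy group of each leaf to be finite (were some holonomy infinite, nearby leaves would accumulate and $W$ would fail to be Hausdorff). By Reeb stability, a saturated neighborhood of each leaf $L$ is then modeled on $\tilde L \times_{\Gamma} V$, where $\Gamma$ is the finite holonomy group of $L$ acting on a transversal $V$; the projections $V \to V/\Gamma$ thus equip $W$ with the structure of a smooth $q$-orbifold. The transverse affine structure is by construction invariant under the transition data, so it descends to an affine connection on $W$, and the non-vertical transverse-geodesics project precisely to the geodesics of this affine orbifold. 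Under this projection, transverse pseudoconvexity, transverse disprisonment and absence of transverse conjugate points become, respectively, pseudoconvexity, disprisonment and absence of conjugate points for the family of all geodesics of $W$.

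\emph{Step 2 (Beem--Parker on $W$).} I would then invoke the affine-orbifold version of the Beem--Parker theorem \cite{beemparker} --- the corollary advertised in the abstract --- applied to $W$. Because $W$ is pseudoconvex and disprisoning with no conjugate points, an orbifold exponential map at a regular point is a covering map whose source is $T_pW \cong \mathbb{R}^q$; hence $W$ is automatically good and its orbifold universal cover $\hat W$ is diffeomorphic to $\mathbb{R}^q$. Set $B := \hat W \cong \mathbb{R}^q$. In particular $B$ is a genuine contractible manifold carrying no orbifold singular points: passing to the orbifold universal cover has resolved the finite isotropy introduced by the leaf holonomies.

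\emph{Step 3 (lifting and trivialization).} Now consider the universal cover $\tilde M$ with its lifted foliation $\tilde{\mathcal{F}}$ and the induced submersion $\tilde M \to \hat W = B$ realizing $\tilde{\mathcal{F}}$ as its family of fibers. Since $B$ has no singular points and $\tilde M$ is simply connected, the leaves of $\tilde{\mathcal{F}}$ have trivial holonomy; applying Reeb stability fiberwise shows that $\tilde M \to B$ is an honest smooth fiber bundle, and contractibility of $B$ trivializes it, so $\tilde M \cong F \times B$ with $\tilde{\mathcal{F}}$ given by the fibers $F \times \{b\}$. Finally, the homotopy exact sequence of $F \to \tilde M \to B$, together with $\pi_1(\tilde M) = 0$ and $\pi_1(B) = \pi_2(B) = 0$, forces $F$ to be connected and simply connected; since $\tilde M \to M$ restricts to a covering on each leaf, $F$ is a cover --- hence the universal cover $\tilde L$ --- of a leaf $L$. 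This yields the claimed splitting $\tilde M \cong \tilde L \times B$.

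\emph{Main obstacle.} The delicate point is Step 3: verifying that the orbifold Seifert fibration $M \to W$ lifts to a \emph{singularity-free} fibration $\tilde M \to B$ whose fibers are precisely the \emph{universal} covers of leaves. One must track the $\pi_1(M)$-action on $\tilde M$ against the $\pi_1^{\mathrm{orb}}(W)$-action on $\hat W$ through the Seifert homotopy exact sequence, and check that passing to $\tilde M$ simultaneously kills every leaf holonomy group and unwraps each orbifold chart $V/\Gamma$ into $V$, so that no intermediate cover of the leaf survives as a fiber. Marrying the affine developability supplied by Beem--Parker to the global structure theory of compact-leaf foliations exactly at the singular orbits is where the real work lies.
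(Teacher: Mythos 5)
Your proposal has a genuine circularity problem: in Step 2 you invoke ``the affine-orbifold version of the Beem--Parker theorem --- the corollary advertised in the abstract,'' but in the paper that corollary (Corollary \ref{corx}) is \emph{deduced from} the theorem you are trying to prove, not available as an independent input. To use it the way you do, you would need a self-contained proof of the Hadamard theorem for affine orbifolds, which in turn requires extending the exponential-map path-lifting machinery of \cite{kledilson} (properness of $\exp$ under pseudoconvexity and disprisonment) to singular spaces. The entire strategy of the paper is to avoid exactly this: it never does geodesic analysis on the orbifold $M/\mathcal{F}$ itself, but instead passes to the universal cover $\tilde{M}$, proves that the lifted leaf space $B=\tilde{M}/\mathcal{F}^*$ is an honest \emph{manifold}, and only then applies the manifold-level Theorem \ref{pc+disp=proper} to $(B,\overline{\nabla})$.

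The second gap is that your Step 3 --- which you yourself flag as the ``main obstacle'' --- is precisely where the paper's proof does its real work, and you do not supply the missing argument. You assume the existence of a singularity-free submersion $\tilde{M}\to B$ whose fibers are the leaves of $\tilde{\mathcal{F}}$, and then run the homotopy exact sequence; but establishing that the lifted leaves are simply connected (equivalently, that every leaf holonomy and every orbifold isotropy group is killed upon passing to $\tilde{M}$) cannot be read off from $\pi_1(\tilde M)=0$ alone. The paper proves this by a concrete geometric construction: it fixes a bundle-like affine structure $(\mathcal{H},\nabla)$, shows that $f:=\exp^\nabla|_{\mathcal{H}|_L}$ is a local diffeomorphism (no transverse conjugate points, via Remark \ref{transverse-conjugate-points-imply-focal-points}) and proper (Propositions \ref{pseudoconvexity-rises} and \ref{disprisonment-rises} plus Theorem \ref{pc+disp=proper}), hence a covering map; pulling $\mathcal{H}|_L$ back over the universal cover $\tilde{L}\to L$ produces a simply connected covering $E\to M$, which must therefore be $\tilde{M}$, and this identification forces the lifted leaves to be diffeomorphic to $\tilde{L}$. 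Only after that does local Euclidean-ness of $B$ follow (trivial suspension), and only then is Beem--Parker-type reasoning applied, downstairs on the manifold $B$. In short: your reduction reverses the logical order of the paper and leaves unproven both the orbifold Hadamard theorem you cite and the leaf-simple-connectivity statement on which the splitting $\tilde{M}\cong\tilde{L}\times B$ actually rests.
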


In addition, if we abide by the philosophy -- common in the foliation-theoretic lore -- of ``transverse-geometry-equals-leaf-space-geometry'', then this part of our work can also be understood as a high-regularity technology to do geometry in a low-regularity setting. As a key application of the latter philosophy, we consider \textit{orbifolds}, which are well-known generalizations of manifolds (see, e.g., \cite{caramello3} for an introduction with extensive references) but can also be naturally realized as leaf spaces of foliations with compact leaves. Our results here allow us to consider orbifolds endowed with an affine structure, and we then obtain a generalization of the Beem-Parker result to this context. 

\begin{corx}[Affine Hadamard theorem for orbifolds]\label{corx}
Let $(\mathcal{O},\nabla)$ be an affine orbifold without conjugate points and such that the family of its geodesics is pseudoconvex and disprisoning. Then $\mathcal{O}$ is a good (developable) orbifold whose universal covering is a contractible manifold. 
\end{corx}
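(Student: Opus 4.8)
The plan is to realize the affine orbifold $(\mathcal{O},\nabla)$ as the leaf space of a foliation carrying a transverse affine structure, translate the three hypotheses into their transverse counterparts, and then invoke Theorem A. Concretely, I would begin from the classical realization of an orbifold as an orbit space: fixing a Riemannian orbifold metric on $\mathcal{O}^q$, its orthonormal frame bundle $M := \mathrm{Fr}(\mathcal{O})$ is a genuine smooth manifold on which $O(q)$ acts properly and \emph{locally freely}, the isotropy of a frame over a point being (a conjugate of) the orbifold isotropy group there, hence finite. The orbit foliation $\mathcal{F}$ then has all leaves compact and with finite holonomy, so $M/\mathcal{F} = \mathcal{O}$ is Hausdorff, placing us squarely within the hypotheses of Theorem A.

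Next I would transport the affine data. Since $O(q)$ acts by transverse symmetries, the connection $\nabla$ on $\mathcal{O}$ pulls back to a transverse affine structure on $(M,\mathcal{F})$ in the sense developed in the first part of the paper, and under this dictionary the geodesics of $(\mathcal{O},\nabla)$ correspond exactly to the non-vertical transverse-geodesics of $\mathcal{F}$. Consequently the three hypotheses on $\mathcal{O}$ --- absence of conjugate points, pseudoconvexity and disprisonment of the geodesic family --- are precisely the absence of transverse conjugate points together with the transverse pseudoconvexity and disprisonment demanded by Theorem A. Applying that theorem produces a diffeomorphism $\tilde{M} \cong \tilde{L}\times B$ with $B$ contractible and $\tilde{\mathcal{F}}$ the fibration $\tilde{M}\to B$; here $\tilde{L}$, the universal cover of the compact leaf $L \cong O(q)/\Gamma'$ for a finite $\Gamma'$, is itself compact, since $\pi_1(O(q))$ is finite and hence so is $\pi_1(L)$.

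To extract the orbifold conclusion I would push the deck group down to the base. The deck transformations $\pi_1(M)$ act freely and properly discontinuously on $\tilde{M}$ and preserve $\tilde{\mathcal{F}}$, so they descend to an action of the group $G$ (the image of $\pi_1(M)$ in $\mathrm{Diff}(B)$) on the contractible manifold $B$. Because the fibers $\tilde{L}$ are compact, the projection $\tilde{M}\to B$ is proper, and a standard descent argument shows that the $G$-action --- no longer free, the failure being exactly the finite leaf-holonomy groups --- is proper and discontinuous. Passing to leaf spaces in the covering $\tilde{M}\to M$ then identifies $\mathcal{O} = M/\mathcal{F}$ with the good orbifold $B/G$, whose universal orbifold cover is the simply connected manifold $B$. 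Thus $\mathcal{O}$ is developable and its universal covering $B$ is a contractible manifold, as claimed.

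I expect the main obstacle to be this last step: verifying that the induced $G$-action on $B$ is genuinely properly discontinuous and that the leaf-space map $B\to\mathcal{O}$ is an \emph{orbifold} covering, not merely a quotient of topological spaces. This requires matching the local holonomy of $\mathcal{F}$ along each leaf with the orbifold isotropy in the charts of $\mathcal{O}$, so that the (possibly non-free) fixed-point structure of $G$ on $B$ reproduces exactly the singular locus of $\mathcal{O}$; the compactness of $\tilde{L}$ secured above is precisely what makes the descent of properness go through.
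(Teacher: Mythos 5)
Your overall route is the one the paper takes: realize $(\mathcal{O},\nabla)$ as the leaf space of the $O(q)$-orbit foliation on the orthonormal frame bundle $M$ (compact leaves, finite holonomy, Hausdorff leaf space), pull $\nabla$ back to a transverse affine structure by the analogue of Proposition \ref{push-pull-submersions}, check that pseudoconvexity, disprisonment and the absence of conjugate points become their transverse counterparts via Remark \ref{local-diffeo-transv-geo}, and feed all of this into Theorem \ref{thrm princ}. The only real divergence is at the end: the paper invokes the equivalence between developability of $\mathcal{O}$ and developability of the realizing foliation, so that the corollary to Theorem \ref{thrm princ} --- a development of $\mathcal{F}$ over the contractible, simply connected manifold $B=\tilde{M}/\mathcal{F}^{\ast}$ --- already yields the conclusion, whereas you descend the deck group to an explicit action of a group $G$ on $B$ and present $\mathcal{O}=B/G$.

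That final descent, as written, has a concrete gap. You assert that $\pi_1(O(q))$ is finite, hence that $\tilde{L}$ is compact, hence that $\tilde{\pi}:\tilde{M}\to B$ is proper, and you deduce proper discontinuity of the $G$-action on $B$ from this properness. But $\pi_1(SO(2))\cong\mathbb{Z}$: in codimension $q=2$ the leaves of the orbit foliation are circles, so $\tilde{L}\cong\mathbb{R}$ is not compact, $\tilde{\pi}$ is a fiber bundle with non-compact fiber and therefore not proper, and the descent of properness --- which you yourself identify as the crux --- breaks down. The conclusion is not in danger: proper discontinuity of the induced action on $B$ should instead be extracted from the compactness and finite holonomy of the leaves of $\mathcal{F}$ downstairs (this is what underlies the equivalence of orbifold and foliation developability that the paper appeals to), not from compactness of $\tilde{L}$. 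As it stands, your argument proves the corollary only for $q\neq 2$ and needs to be repaired in that case.
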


The rest of the paper is organized as follows. In section \ref{sec2}, we recall a number of basic foliation-theoretic concepts we shall use, mainly to establish notation and terminology. In section \ref{sec3} we give an extended description of the main different notions of transverse affine geometry on foliations found in the literature and their mutual relationship, and introduce our own, showing how they relate to previous ones. In section \ref{section: geodesics} we discuss geodesic-related notions, especially a notion of \textit{transverse-geodesic}, or geodesic for the transverse geometry, with associated notions of Jacobi fields and conjugate points thereon. Finally, in section \ref{sec5} we give the technical details and proof of the main theorem described above, together with its corollary for orbifolds.  

\section{Preliminaries}\label{sec2}

We assume the reader is familiar with the basic elements of foliation theory as given in textbooks such as \cite{candel,mrcun,molino}, but we briefly review a few basic pertinent facts mainly to establish the terminology we shall use. Throughout this work, $(M^n,\f^q)$ will denote a \textit{foliated manifold}, \textit{i.e.}, $M$ is a smooth $n$-manifold\footnote{Here, for simplicity \textit{smooth} always means $C^\infty$, although many of the results discussed here may be pursued in lower regularity, and we assume that $M$ is Hausdorff and second-countable.} and $\f$ is a partition of $M$ by $q$-codimensional ($0\leq q \leq n$), immersed, connected, smooth submanifolds of $\f$ called the \textit{leaves} of the foliation\footnote{In particular, we deal here only with \emph{regular} foliations $\f$, which means that all the leaves of $\f$ have the same dimension.}. Since not every partition of a manifold by immersed submanifolds gives a foliation, some added conditions are necessary. Indeed, a foliation $\mathcal{F}$ can be defined in many equivalent ways, but particularly convenient for us here is the definition via a \textit{Haefliger cocycle} $(\mathcal{U}_\lambda, s_\lambda, \gamma_{\lambda\mu})_{\lambda,\mu}$, where $\{\mathcal{U}_\lambda\}$ is an open cover of $M$, each $s_\lambda$ is a smooth submersion from $\mathcal{U}_\lambda$ onto an open subset of $\mathbb{R}^q$ and for each $\lambda, \mu$ such that $\mathcal{U}_\lambda\cap\mathcal{U}_\mu\neq\emptyset$ we have that $\gamma_{\lambda\mu}:s_\mu(\mathcal{U}_\lambda\cap\mathcal{U}_\mu)\rightarrow s_\lambda(\mathcal{U}_\lambda\cap\mathcal{U}_\mu)$ is the unique diffeomorphism satisfying $$\gamma_{\lambda\mu}\circ s_\mu|_{\mathcal{U}_\lambda\cap\mathcal{U}_\mu}=s_\lambda|_{\mathcal{U}_\lambda\cap\mathcal{U}_\mu}.$$ In this context, each (nonempty) connected component of a fiber $s_\lambda^{-1}(c)$ ($c\in \mathbb{R}^q$) of any local submersions $s_\lambda$ is an embedded $q$-codimensional submanifold of $M$ called a \textit{plaque}, and we can introduce an equivalence relation $\sim$ on $M$ by demanding that, $\forall x,y \in M$, $x\sim y$ if and only if either $x,y$ are on the same plaque or else there exists some finite sequence $\sigma_1, \ldots,\sigma_k$ of plaques 
with $k\geq 2$ and $\sigma _i\cap \sigma_{i+i} \neq \emptyset$ for $i\in \{1, \ldots, k-1\}$ so that $x\in \sigma_1, y\in \sigma_k$. The equivalence classes via this equivalence relation are precisely the leaves of $\f$, and two Haefliger cocycles are also said to be \textit{equivalent} when they define the same foliation in this way. In particular, for each leaf $L\in \f$, each $L\cap \mathcal{U}_\lambda$ is a countable union of plaques. The quotient space $M/\f$ via this equivalence relation is the \textit{leaf space} of a foliation. 

It is evident from the previous description that given any smooth submersion $F: M\rightarrow N$, the connected components of its fibers are the leaves of a $(\dim N)$-codimensional foliation ($(M,F,Id_{F(N)}) $ defines a Haefliger cocycle) of $N$. Such a foliation is called \textit{simple}, and gives --- via Haefliger cocycles --- the ``local model'' for a general foliation. Hence we say that a foliation is \textit{locally defined} by submersions.


Given a Haefliger cocycle $(\mathcal{U}_\lambda, s_\lambda, \gamma_{\lambda\mu})_{\lambda,\mu}$, the collection $\{\gamma_{\lambda\mu}\}$ defines a pseudogroup $\mathscr{H}$ of local diffeomorphisms on $\mathbb{R}^q$, called the \textit{holonomy pseudogroup} of that cocycle (an equivalent cocycle will lead to a differentiable equivalent pseudogroup, see e.g. \cite{salem}). Choosing sections for the submersions $s_\lambda$ one defines a \textit{total transversal} for $\f$: a (in general disconnected) $q$-dimensional submanifold $S=\bigsqcup_\lambda S_\lambda$ that transversely intersects every leaf of $\f$ at least once --- where each \textit{local transversal} $S_\lambda$ is the image of the chosen section for $s_\lambda$. After identifying the local quotients $s_\lambda(\mathcal{U}_\lambda)$ with the local transversals $S_\lambda$, the geometric information encoded in $\mathscr{H}$ can then be seen as the local diffeomorphisms of $S$ one gets by ``sliding along the leaves'' of $\f$, thus illustrating that $\mathscr{H}$ encodes the recurrence of the leaves (see \cite[Section 1.7]{molino} or \cite{alex4} for details). Any such diffeomorphism will be called a \textit{holonomy transformation}. Still in this vein, the \textit{holonomy group} $\mathrm{Hol}_x(L)$ of a leaf $\f$ at $x\in L\cap S$ is defined as the collection of germs of holonomy transformations that fix $x$.

Since the leaves of $\f$ are immersed submanifolds, we can gather all their tangent spaces into a rank $n-q$ distribution on $M$, i.e., a vector subbundle $T\mathcal{F}$ of $TM$, called the \textit{tangent} or \textit{vertical distribution} (of $\mathcal{F}$). Elements $v\in T\f$ are \textit{vertical vectors}. Throughout this work we will interchangeably denote either by $\Gamma(T\f)$ or by $\mathfrak{X}(\f)$ the space of the smooth sections of the tangent distribution, and call its elements \textit{vertical vector fields}. It is clear that the vertical distribution is \textit{involutive} in the sense that $[V,W] \in \mathfrak{X}(\f)$ whenever $V,W \in \mathfrak{X}(\f)$; in other words, $\mathfrak{X}(\f)$ is a Lie subalgebra of $\mathfrak{X}(M)$. The \textit{normal bundle} of the foliated manifold $(M, \mathcal{F})$ is the rank $q$ vector bundle given by $\nu\mathcal{F}:=TM/T\mathcal{F}$, where the quotient is taken fiberwise. If we look at the respective spaces of sections of these bundles, we have the following short exact sequence of $C^\infty(M)$-modules: $$
0\rightarrow\Gamma(T\mathcal{F})\rightarrow\Gamma(TM)\rightarrow\Gamma(\nu\mathcal{F})\rightarrow 0.
$$
A rank $q$ distribution $\mathcal{H} \subset TM$ such that $T\f \oplus \mathcal{H}=TM$ is said to be \textit{horizontal}. Clearly, horizontal distributions always exist, though not canonically: if we pick any Riemannian metric $g$ on $M$, defining $\mathcal{H}_x: (T\f_x)^\perp$ for each $x\in M$ gives one such distribution. The $C^\infty(M)$-module $\Gamma(\mathcal{H})$ of the sections of any horizontal distribution $\mathcal{H}$ gives a splitting of the previous exact sequence, since we have $\mathcal{H}\simeq \nu\f$ as vector bundles. Whenever a horizontal distribution $\mathcal{H}$ has been chosen, we refer to vectors $v\in \mathcal{H}$ as \textit{horizontal}; elements $X\in \Gamma(\mathcal{H})$ will accordingly be called \textit{horizontal vector fields} and a smooth (or piecewise smooth) curve $\gamma:I\subset \mathbb{R}\rightarrow M$ is also said to be horizontal if its tangent vectors are all horizontal, i.e, $\gamma'(t) \in \mathcal{H}_{\gamma(t)}, \forall t \in I$.

A function $f: M\rightarrow \mathbb{R}$ is called \textit{basic (with respect to $\mathcal{F}$)} (or \textit{$\f$-basic}) if it is constant on the leaves of $\mathcal{F}$. There is a one-to-one correspondence between continuous basic functions on $M$ and continuous real-valued functions on the leaf space $M/\f$, and $f\in C^\infty(M)$ is basic if and only if $Vf \equiv 0$ for any $V \in \mathfrak{X}(\f)$. The set of all smooth basic functions of $(M,\mathcal{F})$, denoted here by $C^\infty(\mathcal{F})$, is an $\mathbb{R}$-subalgebra of $C^\infty(M)$. A vector field $X\in\mathfrak{X}(M)$ is called \textit{projectable} (or \textit{foliated}) if $\mathcal{L}_VX=[V,X]$ is a vertical vector field for any $V\in\Gamma(T\mathcal{F})$. The set of all foliated vector fields is denoted by $\mathfrak{L}(\mathcal{F})$, and it is both a $C^\infty(\mathcal{F})$-module and a Lie subalgebra of $\mathfrak{X}(M)$ containing the vertical vector fields. Since $\Gamma(T\mathcal{F})$ is clearly an ideal of this Lie algebra, we can take the quotient and define $\mathfrak{l}(\mathcal{F}):=\mathfrak{L}(\mathcal{F})/\Gamma(T\mathcal{F})$, which is the set of \textit{transverse vector fields} of $\f$. It inherits the structure of a Lie algebra and of a $C^\infty(\mathcal{F})$-module from $\mathfrak{L}(\mathcal{F})$. Observe also that $\mathfrak{l}(\f)$ is a vector subspace of $\Gamma(\nu \f)$. 

Transverse vector fields can be alternatively characterized as follows.  Given $X,X' \in \mathfrak{X}(M)$, we introduce an equivalence relation 
$$X\sim X' \Longleftrightarrow X-X'\in \mathfrak{X}(\f),$$
and denote the equivalence class of a smooth vector field $X$ via this relation by $\overline{X}$. These classes can be naturally identified with sections of the normal bundle: given any $\sigma \in \Gamma(\nu \f)$, there exists some $Z\in \mathfrak{X}(M)$ for which $\sigma = \overline{Z}$. Observe also that via this identification, $\mathfrak{l}(\f)$ is a vector subspace of $\Gamma(\nu \f)$. Next we can define, for each $X\in \mathfrak{L}(\f)$, 
\begin{equation} \label{lieder}
\mathcal{L}_X\sigma := \overline{[X,Z]}.
\end{equation}
(Here and hereafter $\mathcal{L}_X$ denotes the Lie derivative along $X\in \mathfrak{X}(M)$ of whatever the respective geometric object appearing after it.) This gives a well-defined section of $\nu \f$, and we say that  $\sigma \in \Gamma(\nu \f)$ is \textit{holonomy-invariant (with respect to $\mathcal{F}$)} when $\mathcal{L}_V\sigma =0$ for every vertical field $V\in\Gamma(T\mathcal{F})$. Therefore, it becomes easy to check that \textit{the space of transverse vector fields coincides with the vector subspace of holonomy-invariant sections of the normal bundle $\nu \f$.}

Although the leaf space $M/\f$ can be very nasty from a differential-topological standpoint and will have a smooth manifold structure in very few cases, one often looks at ``transverse geometric/analytic objects'' --- a somewhat vague phrase better illustrated via concrete examples --- as the ``non-singular'' version of the corresponding objects on the leaf space. Thus, for example, a basic smooth function $f\in C^\infty(\f)$ can be viewed as a ``smooth function'' on $M/\f$, transverse vector fields as ``smooth vector fields'' on $M/\f$ and so on. 

The next proposition illustrates how this philosophy works for (globally defined) smooth submersions and will be concretely used later on the paper. Although very simple and probably quite well-known, we present a proof it here for completeness, since its parts are somewhat scattered in the extant literature. Recall that if $F:M\rightarrow N$ is a smooth map (not necessarily a submersion) then vector fields $X\in \mathfrak{X}(M)$ and 
$Y\in \mathfrak{X}(N)$ are \textit{$F$-related} if 
$$dF_x(X_x) = Y_{F(x)}, \quad \forall x\in M,$$
a fact we denote as $X\stackrel{F}{\sim} Y$. Lie brackets are preserved by $F$-relatedness in the sense that for any $X,X'\in \mathfrak{X}(M)$ and any $Y, Y'\in \mathfrak{X}(N)$, if $X\stackrel{F}{\sim} Y$ and $X'\stackrel{F}{\sim} Y'$, then $[X,X']_M \stackrel{F}{\sim} [Y,Y']_N$. Here and hereafter, when $F:M\rightarrow N$ is a globally defined onto submersion, any mention to a foliation will always refer to that foliation $\f$ whose leaves are the connected components of the fibers of $F$. Note that in this context, a vector field $V\in \mathfrak{X}(M)$ is vertical if and only if it is $F$-related to the zero vector field on $N$. 
\begin{lemma}\label{beconcrete}
    The following statements hold for an onto submersion $F:M\rightarrow N$. 
    \begin{itemize}
        \item[i)] Given $X\in \mathfrak{X}(N)$, there exists a $X^\ast \in \mathfrak{X}(M)$ such that $X^\ast \stackrel{F}{\sim} X$, called a \emph{lift} or \emph{pullback} of $X$ (through $F$). 
        \item[ii)] Any lift of $X\in \mathfrak{X}(N)$ is projectable, and if $X^\ast, \hat{X}^\ast$ are two lifts of the same $X$, then $X^\ast - \hat{X}^\ast \in \mathfrak{X}(\f)$. i.e., they differ by a vertical vector field and hence have the same projection as transverse vector fields of $\f$. 
        \item[iii)] The map 
        \begin{equation}\label{eqbe}
            \phi: X\in \mathfrak{X}(N) \mapsto \overline{X^\ast} \in \mathfrak{l}(\f)
        \end{equation}
        is well-defined, and it is a Lie algebra monomorphism, i.e., it is linear, injective and preserves Lie brackets. 
        \item[iv)] If all the fibers of $F$ are connected, then the map $\phi$ in the previous item is also surjective, and hence an isomorphism of Lie algebras. 
    \end{itemize}
\end{lemma}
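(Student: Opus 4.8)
The plan is to establish the four items essentially in the order given, treating i)--iii) as formal consequences of the bracket-compatibility of $F$-relatedness recalled just above, and reserving the real work for the surjectivity in iv). For item i), I would exploit that an onto submersion makes each $dF_x$ surjective with kernel exactly $T\f_x$. Fixing an auxiliary horizontal distribution $\mathcal{H}$ (for instance the orthogonal complement of $T\f$ relative to some Riemannian metric on $M$, whose existence was noted earlier), each restriction $dF_x|_{\mathcal{H}_x}\colon \mathcal{H}_x \to T_{F(x)}N$ is a linear isomorphism, so I simply set $X^\ast_x := (dF_x|_{\mathcal{H}_x})^{-1}(X_{F(x)})$. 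Smoothness of this assignment follows from that of the splitting $TM = T\f \oplus \mathcal{H}$ together with that of $X$ and $F$, and by construction $X^\ast \stackrel{F}{\sim} X$. For item ii), the key observation is that a vertical field $V$ is precisely one that is $F$-related to the zero field on $N$; hence, invoking bracket-preservation of $F$-relatedness with $X^\ast \stackrel{F}{\sim} X$ and $V \stackrel{F}{\sim} 0$, I get $[V,X^\ast] \stackrel{F}{\sim} [0,X] = 0$, so $[V,X^\ast]$ is vertical and $X^\ast$ is projectable. That two lifts of the same $X$ differ by a vertical field is immediate from linearity of $dF_x$: if $X^\ast \stackrel{F}{\sim} X$ and $\hat{X}^\ast \stackrel{F}{\sim} X$, then $dF_x(X^\ast_x - \hat{X}^\ast_x) = 0$ for all $x$, whence $X^\ast - \hat{X}^\ast \in \mathfrak{X}(\f)$.

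Item iii) then follows formally. Well-definedness of $\phi$ is exactly item ii): the class $\overline{X^\ast}$ is independent of the chosen lift and lies in $\mathfrak{l}(\f)$ by projectability. Linearity is inherited from the evident linearity of the lifting operation, since $aX^\ast + bY^\ast$ is a lift of $aX + bY$. For bracket-preservation, compatibility of $F$-relatedness with brackets gives $[X^\ast, Y^\ast] \stackrel{F}{\sim} [X,Y]$, so $[X^\ast,Y^\ast]$ is a lift of $[X,Y]$, and by the definition of the quotient bracket on $\mathfrak{l}(\f) = \mathfrak{L}(\f)/\Gamma(T\f)$ I obtain $\phi([X,Y]) = \overline{[X^\ast,Y^\ast]} = [\overline{X^\ast},\overline{Y^\ast}] = [\phi(X),\phi(Y)]$. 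Injectivity uses ontoness of $F$: if $\phi(X) = 0$ then some lift $X^\ast$ is vertical, so $X^\ast \stackrel{F}{\sim} 0$; combined with $X^\ast \stackrel{F}{\sim} X$ and surjectivity of $F$, this forces $X_{F(x)} = dF_x(X^\ast_x) = 0$ at every point, hence $X = 0$.

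The substance of the lemma, and the step I expect to be the main obstacle, is the surjectivity in item iv). Given a transverse field represented by a projectable $Z \in \mathfrak{L}(\f)$, the natural candidate is to define $X \in \mathfrak{X}(N)$ by $X_p := dF_x(Z_x)$ for any $x \in F^{-1}(p)$; granting well-definedness, $Z$ is then a lift of $X$ and $\phi(X) = \overline{Z}$, while smoothness of $X$ follows by composing with any local smooth section of $F$. The crux is therefore to show that $x \mapsto dF_x(Z_x)$ is constant on each fiber, and this is exactly where connectedness of the fibers and projectability of $Z$ enter. I would argue that this map is constant along the flow $\psi_t$ of an arbitrary vertical field $V$: since $V$ is vertical, $F \circ \psi_t = F$, so $dF_{\psi_t(x)} = dF_x \circ (d\psi_{-t})_{\psi_t(x)}$, and hence $dF_{\psi_t(x)}(Z_{\psi_t(x)}) = dF_x\big((d\psi_{-t})_{\psi_t(x)} Z_{\psi_t(x)}\big)$. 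Differentiating in $t$ via the identity $\tfrac{d}{dt}(\psi_t)^\ast Z = (\psi_t)^\ast \mathcal{L}_V Z$ and using the flow's group property reduces the $t$-derivative, at every instant, to $dF_{\psi_t(x)}\big((\mathcal{L}_V Z)_{\psi_t(x)}\big)$, which vanishes because projectability makes $\mathcal{L}_V Z = [V,Z]$ vertical. Thus $dF(Z)$ is invariant along all vertical flows; since each fiber is a single connected leaf, any two of its points are joined by a concatenation of such flow segments inside foliation charts, and constancy on the fiber follows. The connectedness hypothesis is essential: without it a fiber splits into several leaves on which $dF(Z)$ could take different values, obstructing the descent to $N$ and hence surjectivity.
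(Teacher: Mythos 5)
Your proposal is correct and follows essentially the same route as the paper: items i)--iii) via a horizontal splitting and bracket-compatibility of $F$-relatedness, and item iv) by defining $X_p:=dF_x(Z_x)$ and using projectability of $Z$ (i.e., verticality of $[V,Z]$) plus connectedness of the fibers to get well-definedness. The only cosmetic difference is in that last verification: the paper shows the scalar function $z\mapsto Z_z(f\circ F)$ is basic for every $f\in C^\infty(N)$, whereas you show the vector-valued map $z\mapsto dF_z(Z_z)$ is constant along vertical flows via $\tfrac{d}{dt}(\psi_t)^\ast Z=(\psi_t)^\ast\mathcal{L}_VZ$ --- two equivalent phrasings of the same computation.
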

\begin{proof} $(i)$\\
Fix a horizontal distribution $\mathcal{H}\subset TM$. Given $x\in M$, since $dF_x : T_xM \rightarrow T_{F(x)}N$ is onto and $T\f_x = \ker dF_x$, $dF_x$ maps $\mathcal{H}_x$ isomorphically onto $T_{F(x)}N$; hence, given $X\in \mathfrak{X}(N)$ we may define $X^\ast_x \in \mathcal{H}_x$ as the unique vector such that $dF_x(X^\ast_x)= X_x$. Doing that pointwise, we define a smooth $X^\ast \in \Gamma(\mathcal{H})$ which is a lift of $X$ by construction.\\
$(ii)$\\
Given any lift $X^\ast \in \mathfrak{X}(M)$ of $X\in \mathfrak{X}(N)$ and a vertical $V\in \mathfrak{X}(\f)$, we have 
$$X^\ast \stackrel{F}{\sim} X \, , \, V \stackrel{F}{\sim} 0_{\mathfrak{X}(N)} \Rightarrow [X^\ast,V]_M \stackrel{F}{\sim} [X,0_{\mathfrak{X}(N)}]_N\equiv 0_{\mathfrak{X}(N)},$$
 or in other words, $[X^\ast, V]_M$ is vertical, and hence $X^\ast \in \mathfrak{L}(\f)$ as desired. Given another lift $\hat{X}^\ast$, we have
 $$dF_x(X^\ast_x)= X_{F(x)}= dF_x(\hat{X}^\ast_x) \Rightarrow dF_x((X^\ast-\hat{X}^\ast)_x) =0, \quad \forall x\in M,$$
 whence the verticality of $X^\ast - \hat{X}^\ast$ follows. \\
 $(iii)$\\
 That $\phi$ is well-defined follows immediately from $(ii)$, and the proof of its linearity is also straightforward. $X\in \ker \phi$ means it admits a vertical lift, which is therefore $F$-related to the zero field on $N$ and to $X$, which entails that $X\equiv 0_{\mathfrak{X}(N)}$, that is, $\phi$ is one-to-one. Now, let $X,Y \in \mathfrak{X}(N)$ and consider lifts $X^\ast,Y^\ast \in \mathfrak{X}(M)$ of $X$ and $Y$, respectively. Thus we have $[X^\ast,Y^\ast]_M \stackrel{F}{\sim} [X,Y]_N$, that is, $[X^\ast,Y^\ast]_M $ is a lift of $[X,Y]_N$ and hence
 $$\phi([X,Y]_N) = \overline{[X^\ast,Y^\ast]_N} = [\overline{X^\ast},\overline{Y^\ast}]_{\mathfrak{l}(\f)} \equiv [\phi(X),\phi(Y)]_{\mathfrak{l}(\f)}.$$
 \\
 $(iv)$\\
 Assume that all fibers of $F$ are connected, and fix $Z\in \mathfrak{L}(\f)$. We define $X\in \mathfrak{X}(N)$ as follows. Given $x\in N$, pick any $z\in F^{-1}(x)$, and put $X_x:= dF_z(Z_z)$. To see this is well-defined pick $f\in C^\infty(N)$, and define the smooth real-valued function $\xi \in C^\infty(M)$ given by
 $$\xi(z) := Z_z(f\circ F) = dF_z(Z_z), \quad \forall z\in M.$$
 We claim that $\xi$ is basic: since $f$ was taken arbitrarily this does imply that $X$ is well-defined since $F^{-1}(x)$, being connected, coincides with a leaf of $\f$. Thus, pick $V \in \mathfrak{X}(\f)$. We have 
 $$V\xi = [V,Z]_M(f\circ F) + ZV(f\circ F).$$
 But since $[V,Z]_M$ is vertical,
 $$([V,Z]_M)_z(f\circ F) = dF_z(([V,Z]_M)_x)\equiv 0,$$
 and similarly we have $V(f\circ F) \equiv 0$. whence we conclude that $V\xi\equiv 0$, and the claim follows, thus finishing the proof. 
\end{proof}
\begin{remark}
    With the conditions and notations of the previous Lemma, some added comments are in order. 
    \begin{itemize}
        \item[a)] Projectable vector fields carry a natural $C^\infty(N)$-module structure defined by 
        $$(f, X) \in C^\infty(N)\times \mathfrak{L}(\f) \mapsto (f\circ F)\cdot X \in \mathfrak{L}(\f)$$
        which induces a $C^\infty(N)$-module structure on the transverse vector fields as well. Moreover, given any $X \in \mathfrak{X}(N)$ and $f\in C^\infty(N)$, and any lift $X^\ast \in \mathfrak{L}(\f)$ of $X$, $(f\circ F)\cdot X^\ast$ is a lift of $f\cdot X$, so $\phi$ defined in Lemma \ref{beconcrete}(ii) is a also a monomorphism of $C^\infty(N)$-modules, and in the conditions of $(iii)$, a module isomorphism as well. This strongly vindicates the identification of transverse fields with ``(desingularized) vector fields on the leaf space''. 
        \item[b)] In his celebrated paper \cite{oneil} on Riemannian submersions, O'Neill refers to a vector field on $M$ as \textit{basic} if it is the (horizontal) lift of a vector field on $N$. He also points out that not all horizontal vector fields are basic, and that basic fields are in one to one-correspondence with vector fields on $N$. Similarly, we see in Lemma \ref{beconcrete}(i) that all basic fields are projectable, but not all projectable fields are lifts of vector fields on $N$ if the fibers of $F$ are not connected. For example, if $M$ is $\mathbb{R}^2$ with the non-negative $y$-axis deleted, consider the submersion $\pi:(x,y) \in  M \mapsto y\in \mathbb{R}$. Fix any smooth function $\varphi \in C^\infty(\mathbb{R})$ such that $\text{supp}\, \varphi \subset (0,+\infty) $ and $\varphi|_{[1,+\infty)}=1$, and let $X\in \mathfrak{X}(M)$ be given as
        $$X(x,y) = \begin{cases}
            \varphi(y)\cdot \partial/\partial y, \text{ if $x>0$},\\
            - \varphi(y)\cdot \partial/\partial y, \text{ if $x<0$},\\
            0, \text{ if $y<0$.}
        \end{cases}$$
        Then, $X$ is projectable, but not a lift of a vector field on $\mathbb{R}$. 
    \end{itemize}
\end{remark}
A $(0,s)$-tensor $T$ on $M$ is said to be \textit{holonomy-invariant (with respect to $\mathcal{F}$)} when $\mathcal{L}_VT=0$. A very important special case is the following: a symmetric $(0,2)-$tensor $g_\intercal$ is called a \textit{semi-Riemannian transverse metric} on $(M,\f)$ if $(i) $ it is holonomy invariant, $(ii)\;  g_\intercal(X,V)=0,\forall X\in\mathfrak{X}(M)\iff V\in\mathfrak{X}(\mathcal{F})$, and $(iii)$ for every $x\in M$ the index of the bilinear form $g_\intercal(x):T_xM\times T_xM\rightarrow\mathbb{R}$ is constant number $0\leq \nu\leq q$. This constant number is a called the \textit{index} of $g_\intercal$. When $\nu=0$, $(M,\f,g_\intercal)$ is called a \textit{Riemannian foliation}. Another case of interest is when $\nu=1$ and $q\geq 2$; we then say that $g_\intercal$ is a \textit{Lorentzian transverse metric} and $(M,\f,g_\intercal)$ is a \textit{Lorentzian foliation}. 

For the rest of the paper, whenever we do computations in local coordinates we shall adopt the Einstein's summation convention: repeated indices indicate summation. However, we slightly modify it as follows. Letters in the beginning of the Latin alphabet \textit{a, b, c, \ldots} denote summation from 1 through $q$, often corresponding to local coordinate vector fields in transversal directions, whereas letters in the middle of the Latin alphabet \textit{i, j, k,\ldots} denote summation over $q+1,\ldots, n$ and will often indicate coordinate vector fields along vertical directions. The use of Greek letters as repeated indices indicate a full summation from 1 through $n$.

\section{Transverse affine geometry}\label{sec3}

What we understand here as an \textit{affine manifold} is simply a pair $(M,\nabla)$ where $\nabla$ is an affine connection on $M$, i.e., a linear connection on $TM$. In particular every semi-Riemannian manifold is an affine manifold when equipped with the associated Levi-Civita connection of the metric. Our goal here is to discuss a transverse analogue of this notion for foliations, thus defining the notion of an \textit{transversely affine foliation}. We fix throughout a foliated manifold $(M,\f)$. 

Now, attempts to define an appropriate notion of a transversely affine foliation are not new in the foliation literature, so we briefly review some of these here. In \cite{furness-fedida}, for instance, the authors posit that $\mathcal{F}$ is a \textit{transversely affine foliation} when the transition diffeomorphisms of a Haefliger cocycle defining $\mathcal{F}$ are affine functions of $\mathbb{R}^q$. That is, for each $\alpha,\beta\in\Lambda$ such that $\mathcal{U}_\alpha\cap\mathcal{U}_\beta\neq\emptyset$, there are $A_{\alpha\beta}\in GL(q,\mathbb{R)}$ and $\textbf{b}_{\alpha\beta}\in\mathbb{R}^q$ such that $\gamma_{\alpha\beta}(\textbf{x})=A_{\alpha\beta}\text{x}+\textbf{b}_{\alpha\beta}, \forall\textbf{x}\in s_\alpha(\mathcal{U}_\alpha\cap\mathcal{U}_\beta)$. However, this definition is too restrictive: it is implicitly only dealing with the \textit{(canonical) flat connection} on $\mathbb{R}^q$. A more general notion of a transverse affine structure can be found in Wolak's work \cite{wolak2}, albeit with another name and merged with additional structures. The idea is that the maps defining a Haefliger cocycle $\gamma_{\alpha\beta}$ are all (general) \textit{affine transformations} between open sets of $\mathbb{R}^q$ in the following more general sense. For each $\alpha\in\Lambda$ one assumes there are defined affine connections $\underaccent{\check}{\nabla}^{\alpha}$ on $\underaccent{\check}{\mathcal{U}}_\alpha$ such that whenever $\gamma_{\alpha\beta}$ exists, it satisfies $\gamma^*_{\alpha\beta}\underaccent{\check}{\nabla}^{\beta}=\underaccent{\check}{\nabla}^{\alpha}$, where $\nabla^\ast$ denote the pullback connection\footnote{Recall that if $F:M\rightarrow N$ is a diffeomorphism and $\nabla$ is an affine connection on $N$, the \textit{pullback} of $\nabla$ is the affine connection $\nabla^\ast$ on $M$ given by the formula $$F_\ast(\nabla^\ast _XY) = \nabla_{F_\ast X}F_\ast Y, \quad \forall X,Y \in \mathfrak{X}(M).$$ }. The latter definition indeed accords well with Haefliger's well-known pseudogroup approach to transverse geometry of foliations \cite{haefliger3,haefliger4,mrcun}, and we shall essentially adopt it in this work. However, it has the drawback that a proof of the independence of the choice of a Haefliger's cocycle is not obvious. Accordingly, we shall seek an equivalent geometric formulation of Wolak's definition which is explicitly independent of that choice. 
\begin{remark}\label{basicconn}
    There is an invariant, natural geometric context behind Wolak's approach that can be traced back to the theory of \textit{basic/projectable connections on transverse frame bundles} developed by Molino \cite{molino}, Bott \cite{bott}, Kamber and Tondeur \cite{kambertondeur}. Again the interest of making this discussion relatively self-contained, we briefly recall its main points here.

Let $F_\intercal(M,\mathcal{F})$ denote the $GL(q,\mathbb{R})$-principal bundle of all frames on $\nu\mathcal{F}$. The foliation $\mathcal{F}$ on $M$ induces a foliation $\tilde{\mathcal{F}}$ of the same dimension on $F_\intercal(M,\mathcal{F})$, called the \textit{lifted foliation} (see  \cite[Example 4.19]{mrcun}). A principal connection $\omega:\mathfrak{X}(F_\intercal(M,\mathcal{F}))\rightarrow\mathfrak{gl}(q,\mathbb{R})$ on this bundle is said to be \textit{transverse} (see \cite[Sec. 2.5]{molino}) if $\omega(X)=0$ for every vector field $X \in \mathfrak{X}(F_\intercal(M,\mathcal{F}))$ which is tangent to $\tilde{\mathcal{F}}$. That is to say, $\omega$ is transverse when it annihilates any $\tilde{\mathcal{F}}$-vertical vector. This is also what Kamber and Tondeur call an \textit{adapted connection}. These authors show \cite[Lemma 2.7]{kambertondeur} that such an object corresponds to a linear connection $D$ on $\nu\mathcal{F}$ . Furthermore, Molino, in Ref. \cite[Sec. 2.5]{molino} says that $\omega$ is a \textit{projectable connection} if it is a basic 1-form with respect to the lifted foliation $\tilde{\mathcal{F}}$, that is, it is both transverse/adapted and holonomy-invariant. In the work of Kamber and Tondeur, this corresponds to the concept of a \textit{basic connection}, and these authors show that such an object exists precisely when the foliation is transversely affine in Wolak's sense \cite{wolak2}.
\end{remark}
\medskip

Again, a disadvantage of the approach via basic connections on the principal bundle of transverse frames is that it gives a somewhat opaque framework to treat geodesics. Accordingly, we seek another alternative, and show it gives the same information as basic connections. 

A natural starting point would be to consider linear connections $D: \mathfrak{X}(M)\times \Gamma(\nu \f) \rightarrow \Gamma(\nu \f)$ on the normal bundle of $\f$. Recall the Lie derivative along a \textit{projectable} vector field $X\in \mathfrak{L}(\f) $ was defined in the previous section (conf. \eqref{lieder}). We now define
\begin{equation}\label{lieconnformula}
(\mathcal{L}_XD)_Z\sigma := \mathcal{L}_X(D_Z\sigma) - D_{[X,Z]}\sigma - D_Z(\mathcal{L}_X\sigma), \quad \forall Z\in \mathfrak{X}(M).
\end{equation}
It is easy to see $\mathcal{L}_XD$ is $C^\infty(M)$-bilinear in its entries $Z,\sigma$. We might then try and say that $D$ is holonomy-invariant if $\mathcal{L}_VD$ is identically zero for any vertical vector field $V$. It turns out, however, that this definition would be unnecessarily restrictive, because it would affect \textit{all} sections of the normal bundle. A better definition arises from the following simple fact whose proof is immediate from \eqref{lieconnformula} if we recall that 
$$\sigma \in \mathfrak{l}(\f) \Leftrightarrow \mathcal{L}_V\sigma =0, \quad \forall V\in \Gamma(T\f).$$
\begin{proposition}\label{definetransverseaffine}
    Let $D$ denote a connection on the normal bundle $\nu \f$ of the foliated manifold $(M,\f)$. The following statements are equivalent. 
    \begin{itemize}
        \item[i)] For any $X \in \mathfrak{L}(\f)$, $D_X\mathfrak{l}(\f)\subset \mathfrak{l}(\f)$. (In words: covariant derivatives of transverse vector fields along projectable vector fields are also transverse vector fields.) 
        \item[ii)] $(\mathcal{L}_VD)_X\sigma =0 \quad \forall V\in \mathfrak{X}(\f),\forall X\in \mathfrak{L}(\f), \forall \sigma \in \mathfrak{l}(\f).$
    \end{itemize}
\end{proposition}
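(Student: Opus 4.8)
The plan is to substitute $Z=X$ into the defining formula \eqref{lieconnformula} and use that $\sigma\in\mathfrak{l}(\f)$ means $\mathcal{L}_V\sigma=0$, so that for vertical $V$, projectable $X$ and transverse $\sigma$ one is left with
\[
(\mathcal{L}_V D)_X\sigma = \mathcal{L}_V(D_X\sigma) - D_{[V,X]}\sigma.
\]
Since $X$ is projectable and $V$ is vertical, $[V,X]=\mathcal{L}_V X\in\mathfrak{X}(\f)$ is again vertical, and since $D_X\sigma\in\mathfrak{l}(\f)$ is by definition equivalent to $\mathcal{L}_V(D_X\sigma)=0$ for all vertical $V$, the whole equivalence will follow at once from the single auxiliary claim that, under either hypothesis, $D_V\sigma=0$ for every vertical $V$ and every transverse $\sigma$. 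Granting this, the term $D_{[V,X]}\sigma$ drops out, and the displayed identity says exactly that $(\mathcal{L}_V D)_X\sigma=0$ if and only if $\mathcal{L}_V(D_X\sigma)=0$.

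The heart of the matter is therefore this auxiliary claim, which I would establish separately in each direction. For (i)$\Rightarrow$(ii) I would note that vertical fields are themselves projectable, so (i) already gives $D_V\sigma\in\mathfrak{l}(\f)$ for vertical $V$; writing $\tau:=D_V\sigma$ and replacing $V$ by $fV$ (still vertical) for arbitrary $f\in C^\infty(M)$, tensoriality of $D$ in its direction slot gives $f\tau=D_{fV}\sigma\in\mathfrak{l}(\f)$ as well. But $\mathfrak{l}(\f)$ is only a module over the basic functions $C^\infty(\f)$, not over $C^\infty(M)$: from $\mathcal{L}_W(f\tau)=(Wf)\tau+f\mathcal{L}_W\tau=(Wf)\tau$ one reads off $(Wf)\tau=0$ for all vertical $W$ and all $f$, which forces $\tau=0$ at every point admitting a nonzero vertical vector. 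For (ii)$\Rightarrow$(i) I would instead compute how $\mathcal{L}_{(\cdot)}D$ fails to be tensorial in its differentiating field: using $\mathcal{L}_{fV}\rho=f\mathcal{L}_V\rho$ for vertical $V$ (the correction term $(Zf)\overline V$ vanishes because $\overline V=0$) together with $\mathcal{L}_V\sigma=0$, the defining formula yields
\[
(\mathcal{L}_{fV}D)_Z\sigma = f\,(\mathcal{L}_V D)_Z\sigma + (Zf)\,D_V\sigma.
\]
Applying (ii) to both $V$ and $fV$ (both vertical) makes the first two terms vanish, leaving $(Zf)D_V\sigma=0$ for all $f$ and all $Z\in\mathfrak{L}(\f)$; since projectable fields span each tangent space, choosing $Z$ and $f$ with $(Zf)(p)\neq0$ gives $D_V\sigma=0$.

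With the auxiliary claim in hand both implications are immediate: under (i), $D_X\sigma\in\mathfrak{l}(\f)$ gives $\mathcal{L}_V(D_X\sigma)=0$ while $D_{[V,X]}\sigma=0$ because $[V,X]$ is vertical, so all three terms of \eqref{lieconnformula} vanish and (ii) holds; under (ii), the identity $\mathcal{L}_V(D_X\sigma)=D_{[V,X]}\sigma=0$ shows $D_X\sigma$ is annihilated by every $\mathcal{L}_V$, i.e.\ lies in $\mathfrak{l}(\f)$, which is (i). I expect the main obstacle to be precisely the recognition that $D_{[V,X]}\sigma$ cannot be discarded for purely formal reasons --- $[V,X]$ is vertical, but $\sigma$ need not be $D$-parallel in vertical directions \emph{a priori} --- and that removing it requires the rigidity observation $D_V\sigma=0$. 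The subtle structural point underlying that observation is the contrast between $\mathfrak{l}(\f)$ being a $C^\infty(\f)$-module rather than a $C^\infty(M)$-module (used for (i)$\Rightarrow$(ii)) and, dually, the failure of $V\mapsto\mathcal{L}_V D$ to be function-linear (used for (ii)$\Rightarrow$(i)); the degenerate case $\dim\f=0$, where there are no nonzero vertical fields, is trivial and handled separately.
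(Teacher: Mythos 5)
Your proof is correct, and it is actually more complete than what the paper provides: the paper dismisses this proposition as ``immediate'' from \eqref{lieconnformula} together with the characterization $\sigma\in\mathfrak{l}(\f)\Leftrightarrow\mathcal{L}_V\sigma=0$, and records no further argument. You have correctly isolated why it is not quite immediate: after the term $D_X(\mathcal{L}_V\sigma)$ drops out one is still left with $\mathcal{L}_V(D_X\sigma)-D_{[V,X]}\sigma$, and the second summand does not vanish for purely formal reasons, since $[V,X]$ being vertical only places $D_{[V,X]}\sigma$ in $\mathfrak{l}(\f)$ under (i), not at zero. Your auxiliary claim that either hypothesis forces $D_V\sigma=0$ for vertical $V$ and transverse $\sigma$ (when $\dim\f\geq 1$) is exactly the missing ingredient, and both derivations of it are sound: from (i) you exploit that $\mathfrak{l}(\f)$ is a $C^\infty(\f)$-module while $D$ is $C^\infty(M)$-tensorial in its direction slot, so that $fD_V\sigma\in\mathfrak{l}(\f)$ for all $f$ forces $D_V\sigma=0$ pointwise; from (ii) you exploit the defect of $C^\infty(M)$-linearity of $V\mapsto\mathcal{L}_VD$, whose correction term is precisely $(Zf)D_V\sigma$. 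The resulting rigidity --- that a connection satisfying (i) necessarily restricts along the leaves to the Bott-type condition $D_V\sigma=0$ on transverse sections --- is consistent with the basic Levi-Civita example given right after the proposition, and your separate treatment of the degenerate case $\dim\f=0$ is appropriate. In short: same intended route as the paper, but with a genuine intermediate lemma supplied that the paper's one-line justification glosses over.
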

\qcd
The previous result provides a more appropriate notion of holonomy-invariance for connections on the normal bundle $\nu \f$.
\begin{definition}\label{transaffdefi}
  A  connection on the normal bundle $\nu \f$ of the foliated manifold $(M,\f)$ is \emph{holonomy-invariant} if one (and hence both) of the properties in Prop. \ref{definetransverseaffine} holds. 
\end{definition}

\begin{example}[Basic Levi-Civita connections]
    The normal bundle $\nu\f$ is canonically equipped with a partial connection $\nabla^{\mathrm{Bott}}:\mathfrak{X}(\f)\times\Gamma(\nu\f)\rightarrow\Gamma(\nu\f)$, the so-called \textit{Bott connection}. This is given by $\nabla^{\mathrm{Bott}}_V\overline{X}:=\overline{[V,X]}, \forall V\in\mathfrak{X}(\f), \forall X\in\mathfrak{X}(M)$. If $(M,\f,g_\intercal)$ is a semi-Riemannian foliation and we pick a bundle-like metric $g$ with Levi-Civita connection $\nabla$, then the Bott connection can be completed to a full connection on $\nu\f$, called the \textit{basic (or transverse) Levi-Civita connection} $\nabla^\intercal:\mathfrak{X}(M)\times\Gamma(\nu\f)\rightarrow\Gamma(\nu\f)$ given by:$$
\nabla^\intercal_X\overline{Y}:=\begin{cases}
\nabla^{\mathrm{Bott}}_X\overline{Y}, \text{ if }X\in\mathfrak{X}(\f);\\
\overline{\nabla_XY}, \text{ if }X\in\mathfrak{X}(\f)^\perp.
\end{cases}
$$
One can easily check that the basic Levi-Civita connection does not depend on the specific bundle-like metric used. A long but straightforward computation shows that it is indeed holonomy-invariant in the sense of Definition \ref{transaffdefi}. 
\end{example}
\medskip

An important caveat of the previous definitions for us in this paper is that we are crucially interested in studying a transverse analogue of \textit{geodesics}, and it turns out that these are best understood in terms of (standard) geodesics of an affine connections \textit{on the manifold} $M$, \textit{not} on the normal bundle. We therefore need to introduce here the affine analogue of a bundle-like metric. 

\begin{definition}[Bundle-like affine structure]\label{bundle-like affine structure}
A \textit{bundle-like affine structure} on $(M, \mathcal{F})$ is a pair $(\mathcal{H}, \nabla)$, where $\nabla$ is an affine connection on $M$ and $\mathcal{H}\subset TM$ is a horizontal distribution such that $\nabla_{\mathcal{H}X}\mathcal{H}Y$ is projectable for every $X,Y\in\mathfrak{L}(\mathcal{F})$.\end{definition}

It is clear that independently of the choice of the horizontal distribution $\mathcal{H}$, the horizontal part of any projectable vector field is also projectable. Hence, given a bundle-like affine structure $(\mathcal{H}, \nabla)$ on $(M, \mathcal{F})$ we induce a connection $\overline{\nabla}$ on the normal bundle $\nu \f$ by 
$$\overline{\nabla }_X\sigma:= \overline{\nabla_{X}\mathcal{H}Z}, \quad \forall \sigma \in \Gamma(\nu \f), \forall X\in \mathfrak{X}(M),$$
where $Z \in \mathfrak{X}(M)$ is any vector field such that $\sigma = \overline{Z}$. We then easily see that $\overline{\nabla}_X \mathfrak{l}(\f) \subset \mathfrak{l}(\f), \forall X\in \mathfrak{L}(\f)$, and hence $\overline{\nabla}$ is indeed holonomy-invariant in the sense of Definition \ref{transaffdefi}. 

Recall that if $(M, \mathcal{F})$ is endowed with a bundle-like \textit{semi-Riemannian metric} $g$, then it is locally given by semi-Riemannian submersions. As Abe and Hasegawa point out (more on that in Section \ref{section: geodesics}), one can then consider affine submersions with horizontal distribution $\mathcal{H}$ given by the orthogonal complement with respect to the metric. The decomposition $TM= \mathcal{H}\oplus T\f$ is well-defined since the leaves of $\mathcal{F}$ are semi-Riemannian submanifolds (see, for instance, chapter 4 of \cite{oneillbook}). This means that a semi-Riemannian foliation will automatically define a bundle-like affine structure with $\nabla$ chosen as the Levi-Civita connection. 

One mildly annoying feature of bundle-like affine structures is their dependence on the specific choice of the horizontal distribution $\mathcal{H}$. It would be interesting to seek out a definition of affine structure which is ``purely transverse'' in the sense of not depending on such a choice, because it is --- unlike with semi-Riemannian foliations --- somewhat \textit{ad hoc} and highly non-unique. We thus propose the following definition.

\begin{definition}[Transverse affine connection]\label{tasdefi1}
Let $\Hat{\nabla}$ be an affine connection on $M$. We say that $\Hat{\nabla}$ is a \textit{transverse affine connection} (on $(M,\mathcal{F})$) if there is a torsion-free linear connection $\omega:\mathfrak{X}(M)\times\Gamma(T\mathcal{F})\rightarrow\Gamma(T\mathcal{F})$, which we refer to as a \textit{partner connection}, such that, for all $X\in\mathfrak{X}(M)$ and all $V\in\Gamma(T\mathcal{F})$ we have the properties:
\begin{enumerate}
    \item $\Hat{\nabla}_XV=\omega_XV$;
    \item $\Hat{\nabla}_VX=[V,X]+\omega_XV$;
    \item $\mathcal{L}_V\Hat{\nabla}$ is a $(1,2)$-tensor taking values on $\Gamma(T\mathcal{F})$.
\end{enumerate}
\end{definition}

Note that the linear connection $\omega$ in the previous definition is required to be torsion-free in order to make conditions $(1)$ and $(2)$ compatible when $\Hat{\nabla}$ is evaluated on two vertical vector fields. That is, if $V, W\in\Gamma(T\mathcal{F})$ we have that \begin{align*}
\Hat{\nabla}_VW\overset{(1)}{=}\omega_VW\overset{\mathrm{torsion\mbox{-}free}}{=}[V, W]+\omega_WV\overset{(2)}{=}\Hat{\nabla}_VW.
\end{align*}
In particular, a given transverse affine connections induce a linear connection on the bundle $T\f$, to wit, its partner connection.

Condition (3) is meant to ensure that $\Hat{\nabla}$ again induces an holonomy-invariant connection on the normal bundle. Indeed, if $X, Y\in \mathfrak{L}(\mathcal{F})$ and $V\in\mathfrak{X}(\mathcal{F})$, then \begin{align}\label{derivative is projectable}
\left[V, \hat{\nabla}_XY\right]&=\left(\mathcal{L}_V\Hat{\nabla}\right)(X,Y)+\hat{\nabla}_{[V,X]}Y+\hat{\nabla}_X[V,Y]\\
&=\left(\mathcal{L}_V\Hat{\nabla}\right)(X,Y)+[[V,X],Y]+\omega_{Y}[V,X]+\omega_X[V,Y],
\end{align}which is everywhere vertical. Therefore, the covariant derivative of projectable vector fields with respect to a transverse affine connection is again a projectable field. In particular, if we choose \textit{any} horizontal distribution $\mathcal{H}$, $\mathcal{H}X$ is projectable whenever $X$ is projectable, and therefore we have:
\begin{proposition}\label{prop: tac produces bas}
    If $\Hat{\nabla}$ is a transverse affine connection on $(M,\f)$ and $\mathcal{H}$ is a horizontal distribution, then $(\Hat{\nabla}, \mathcal{H})$ is a bundle-like affine structure on $(M,\f)$.
\end{proposition}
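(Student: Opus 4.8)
The plan is to verify directly the single defining condition of Definition \ref{bundle-like affine structure}: that $\hat{\nabla}_{\mathcal{H}X}\mathcal{H}Y$ be projectable for every $X,Y\in\mathfrak{L}(\f)$. The essential work has in fact already been carried out in the computation \eqref{derivative is projectable}, which shows that $\hat{\nabla}_XY\in\mathfrak{L}(\f)$ whenever $X,Y\in\mathfrak{L}(\f)$; that is, a transverse affine connection maps a pair of projectable fields to a projectable field. So the only thing left to establish is that feeding $\mathcal{H}X$ and $\mathcal{H}Y$ into this statement is legitimate, i.e. that the horizontal part of a projectable field is itself projectable.

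To see the latter, I would fix $X\in\mathfrak{L}(\f)$ and split it along the decomposition $TM=T\f\oplus\mathcal{H}$ as $X=(X-\mathcal{H}X)+\mathcal{H}X$, where the vertical part $X-\mathcal{H}X$ lies in $\Gamma(T\f)$. Since $T\f$ is involutive, every vertical field $V$ satisfies $[W,V]\in\Gamma(T\f)$ for all $W\in\Gamma(T\f)$, so $\Gamma(T\f)\subset\mathfrak{L}(\f)$; in particular $X-\mathcal{H}X\in\mathfrak{L}(\f)$. As $\mathfrak{L}(\f)$ is a vector subspace of $\mathfrak{X}(M)$, it follows that $\mathcal{H}X=X-(X-\mathcal{H}X)\in\mathfrak{L}(\f)$. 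This is precisely the assertion, already flagged as clear in the text, that the horizontal part of a projectable field is projectable, and it holds for any choice of horizontal distribution $\mathcal{H}$.

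Combining the two observations finishes the argument: given $X,Y\in\mathfrak{L}(\f)$, the fields $\mathcal{H}X$ and $\mathcal{H}Y$ are projectable by the splitting step, whence \eqref{derivative is projectable} applied to the pair $(\mathcal{H}X,\mathcal{H}Y)$ gives that $\hat{\nabla}_{\mathcal{H}X}\mathcal{H}Y$ is projectable. Since $\hat{\nabla}$ is by hypothesis an affine connection on $M$ and $\mathcal{H}$ is a horizontal distribution, the pair $(\mathcal{H},\hat{\nabla})$ then satisfies Definition \ref{bundle-like affine structure}, as required. I do not expect any genuine obstacle here: the content is entirely contained in the preceding computation \eqref{derivative is projectable}, so the proposition is in effect its immediate corollary, the only mild subtlety being the elementary verification that projectability is preserved under taking horizontal parts.
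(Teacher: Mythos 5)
Your proposal is correct and follows essentially the same route as the paper, which also derives the proposition directly from the computation \eqref{derivative is projectable} together with the observation (stated as clear in the text) that the horizontal part of a projectable field is projectable. Your explicit verification of that observation via the decomposition $X=\mathcal{V}X+\mathcal{H}X$ and the involutivity of $T\f$ is exactly the intended filling-in of that step.
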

\qcd
\begin{example}
\textit{The converse of Prop. \ref{prop: tac produces bas} is false in general.} For example, consider the 2-dimensional foliation $\mathcal{F}$ given by the submersion $F:\mathbb{R}^3\rightarrow\mathbb{R}$, $F(x,y,z)=z-x^2-y^2$. The Euclidean metric $g=dx^2+dy^2+dz^2$ is \textit{not} a bundle-like metric with respect to $\mathcal{F}$, but its Levi-Civita connection, which is the canonical flat connection $\nabla^\mathrm{flat}$ produces a bundle-like affine structure together with $\mathcal{H}:=T\mathcal{F}^\perp$. To see that, we start by noting that \begin{align*}
    T\mathcal{F}&=\mathrm{span}\left\{V_1:=\frac{\partial}{\partial x}+2x\frac{\partial}{\partial z}, V_2:=\frac{\partial}{\partial y}+2y\frac{\partial}{\partial z}\right\}, \\
    \mathcal{H}&=\mathrm{span}\left\{H:=\frac{\partial}{\partial z}-2x\frac{\partial}{\partial x}-2y\frac{\partial}{\partial y}\right\}.
\end{align*}A simple computation shows that $H$ is projectable, and $\nabla^\mathrm{flat}_HH=-4\left(x\frac{\partial}{\partial x}+y\frac{\partial}{\partial y}\right)$. And then,
\begin{align*}
[V_1,\nabla^\mathrm{flat}_HH]&=\left[\frac{\partial}{\partial x}+2x\frac{\partial}{\partial z}, -4\left(x\frac{\partial}{\partial x}+y\frac{\partial}{\partial y}\right)\right]=-4\frac{\partial}{\partial x}+8x\frac{\partial}{\partial z}=-4V_1\\
[V_2,\nabla^\mathrm{flat}_HH]&=\left[\frac{\partial}{\partial y}+2y\frac{\partial}{\partial z}, -4\left(x\frac{\partial}{\partial x}+y\frac{\partial}{\partial y}\right)\right]=-4\frac{\partial}{\partial y}+8y\frac{\partial}{\partial z}=-4V_2,
\end{align*}which means that $\nabla^{\mathrm{flat}}_HH$ is projectable. However, we may compute, for example, $$
\nabla^\mathrm{flat}_{\frac{\partial}{\partial x}}V_1=2\frac{\partial}{\partial z},
$$which is not vertical. Therefore, $\nabla^\mathrm{flat}$ is not an transverse affine connection. However, Proposition \ref{good-new-switcheroo} will show later on how to produce a transverse affine connection, given a bundle-like affine structure.
\end{example}

Observe that adding any $(1,2)$-tensor which takes values on $\Gamma(T\mathcal{F})$ to a transverse affine connection will produce another transverse affine connection with the same transverse information, in the following sense: if $\Hat{\nabla}^1$ and $\Hat{\nabla}^2$ are two transverse affine connections which differ by a $(1,2)$-tensor field taking values on the vertical bundle, then it is clear that they must induce the same connection on $\nu\mathcal{F}$. This motivates the following definition.

\begin{definition}[Transverse affine structure]\label{tasdefi2}
    A \textit{transverse affine structure} on $(M, \mathcal{F})$ is an equivalence class $[\Hat{\nabla}]$ of transverse affine connections with respect to the relation that identifies two such connections whenever their respective induced connections on $\nu\mathcal{F}$ coincide.
\end{definition}

The next result shows that Defs. \ref{tasdefi1} and \ref{tasdefi2} work in tandem to capture the same information as holonomy-invariant connections on the normal bundle.
\begin{proposition}\label{good-new-switcheroo}
Let $(M,\mathcal{F})$ be a foliated manifold. The following statements hold.
\begin{itemize}
    \item[i)] If $(M,\mathcal{F})$ admits a bundle-like affine structure $(\mathcal{H}, \nabla)$, then there exists a unique transverse affine structure $[\Hat{\nabla}]$ such that $$\mathcal{H}\nabla_XY=\overline{\nabla}_XY, \forall X,Y\in\Gamma(\mathcal{H}),\forall \overline{\nabla}\in[\hat{\nabla}].$$
    \item[ii)] There is a one-to-one correspondence between the collection of transverse affine structures on $(M,\mathcal{F})$ and the set of holonomy-invariant connections on the normal bundle $\nu \f$. 
\end{itemize}
\end{proposition}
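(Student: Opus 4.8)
The plan is to treat both parts at once by constructing, in one direction, the normal connection induced by a transverse affine connection, and in the other, a transverse affine connection realizing a prescribed holonomy-invariant connection $D$ on $\nu\f$. Part (i) then follows by feeding in the particular $D=\overline{\nabla}$ already produced by the bundle-like structure (which was shown to be holonomy-invariant just before the statement), and part (ii) by checking that the two assignments are mutually inverse on equivalence classes.

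For the forward assignment I would set, for a transverse affine connection $\Hat{\nabla}$, the rule $D_X\overline{Z}:=\overline{\Hat{\nabla}_X Z}$. This is well defined independently of the representative $Z$ of $\overline{Z}$, since condition $(1)$ of Definition \ref{tasdefi1} gives $\Hat{\nabla}_X V=\omega_X V\in\Gamma(T\f)$ for vertical $V$, so that $\overline{\Hat{\nabla}_X V}=0$. That $D$ is holonomy-invariant in the sense of Proposition \ref{definetransverseaffine} is exactly the content of the computation \eqref{derivative is projectable}, since a transverse field is represented by a projectable one; and $D$ depends only on the class $[\Hat{\nabla}]$ because by Definition \ref{tasdefi2} equivalent connections induce the same connection on $\nu\f$.

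For the reverse assignment, given a holonomy-invariant $D$ I would first fix any horizontal distribution $\mathcal{H}$ and any torsion-free partner connection $\omega$; one exists, e.g.\ as $\omega_X V:=\mathcal{V}\nabla^0_X V$ for a torsion-free $\nabla^0$ on $M$, where $\mathcal{V}$ denotes the vertical projection, torsion-freeness on verticals following from $\mathcal{V}[V,W]=[V,W]$. Using the isomorphism $\mathcal{H}\cong\nu\f$ I would then define $\Hat{\nabla}$ blockwise: $\Hat{\nabla}_{\mathcal{H}X}\mathcal{H}Y$ as the horizontal lift of $D_{\mathcal{H}X}\overline{\mathcal{H}Y}$ (for part (i) this is $\mathcal{H}\nabla_{\mathcal{H}X}\mathcal{H}Y$), together with $\Hat{\nabla}_X V:=\omega_X V$ and $\Hat{\nabla}_V\,\mathcal{H}Y:=[V,\mathcal{H}Y]+\omega_{\mathcal{H}Y}V$, extended bilinearly. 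A routine check confirms $C^\infty$-linearity in the first slot and the Leibniz rule in the second, so $\Hat{\nabla}$ is an honest connection; condition $(1)$ holds by construction, and condition $(2)$ holds precisely because $\omega$ is torsion-free, which is exactly what motivated that hypothesis in Definition \ref{tasdefi1}. By construction the induced normal connection is $D$.

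The step I expect to be the crux is condition $(3)$ and its role in closing the bijection. Here I would use that $\mathcal{L}_V\Hat{\nabla}$ is automatically a $(1,2)$-tensor, so its verticality may be tested on projectable and transverse fields; projecting the defining expression of $\mathcal{L}_V\Hat{\nabla}$ to $\nu\f$ and comparing with \eqref{lieconnformula} shows this projection to be $\mathcal{L}_V D$, whence condition $(3)$ is equivalent to the holonomy-invariance of $D$. This both certifies that the constructed $\Hat{\nabla}$ is a transverse affine connection and shows the two assignments are inverse on classes: any two outputs of the construction induce the same $D$ and hence lie in a single class, independently of the auxiliary choices of $\mathcal{H}$ and $\omega$, which yields well-definedness of the reverse map and the uniqueness asserted in (i), while the forward-then-reverse and reverse-then-forward composites are the respective identities. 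The main bookkeeping obstacle is exactly this independence-of-choices argument, combined with the tensorial projection computation identifying $(3)$ with holonomy-invariance.
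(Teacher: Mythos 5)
Your proof is correct and follows essentially the same construction as the paper: both build the transverse affine connection blockwise from a choice of horizontal distribution, a vertical partner connection, and the prescribed normal connection $D$, the only organizational difference being that the paper routes the reverse direction through an intermediate bundle-like affine structure $(\mathcal{H},\nabla^D)$ and then invokes part (i), whereas you write down the same composite formula directly. Your explicit identification of the $\nu\mathcal{F}$-projection of $(\mathcal{L}_V\hat{\nabla})(X,Y)$ with $(\mathcal{L}_V D)_X\overline{Y}$ is the verification of condition $(3)$ that the paper leaves as a ``straightforward computation,'' and your check that torsion-freeness of $\omega$ is exactly what makes conditions $(1)$ and $(2)$ compatible matches the paper's discussion following Definition \ref{tasdefi1}.
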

\begin{proof}

$(i)$\\
Given a bundle-like affine structure $(\mathcal{H}, \nabla)$, we define $\hat{\nabla}$ by the formula$$
\Hat{\nabla}_XY:=\mathcal{V}\left(\nabla_X\mathcal{V}Y\right)+[\mathcal{V}X, \mathcal{H}Y]+\mathcal{V}\left(\nabla_{\mathcal{H}Y}\mathcal{V}X\right)+\mathcal{H}\left(\nabla_{\mathcal{H}X}\mathcal{H}Y\right) \quad \forall X,Y \in \mathfrak{X}(M).
$$ A straightforward computation shows that $\hat{\nabla}$ is indeed a transverse affine connection with $\mathcal{V}\left(\nabla_{(\cdot)}\mathcal{V}(\cdot)\right)$ defining a partner connection for it. In addition, it is clear that $[\hat{\nabla}]$ satisfies the stated property as well.\\
$(ii)$\\
Denote by $\mathcal{TA}(\f)$ the set of transverse affine structures on $(M,\f)$ and by $\mathfrak{H}(\nu \f)$ the set of holonomy-invariant connections on $\nu\f$. It suffices to show that any holonomy-invariant connection on $\nu\f$ is induced by a unique transverse affine structure. Choose a horizontal distribution $\mathcal{H}$ and any affine connection $\nabla$ on $M$. Observe that the choice of $\mathcal{H}$ induces a $C^\infty(M)$-module isomorphism $\Psi:\Gamma(\mathcal{H})\rightarrow \Gamma(\nu \f) $ by $\Psi(X) = \overline{X}, \forall X\in \Gamma(\mathcal{H})$. Given $D \in \mathfrak{H}(\nu \f)$, define the connection on $M$ given by 
$$\nabla^D_XY := \mathcal{V}\nabla _X \mathcal{V}Y + \Psi^{-1}(D_X\Psi(\mathcal{H}Y)), \quad \forall X,Y \in \mathfrak{X}(M).$$
We claim that $(\mathcal{H},\nabla^D)$ is a bundle-like affine structure. If we can establish this, then by $(i)$ the proof will be complete. To see it, let $X,Y\in \mathfrak{L}(\f)$, so that $\mathcal{H}X,\mathcal{H}Y \in \mathfrak{L}(\f)$ as well, and we compute
$$\nabla^D_{\mathcal{H}X}\mathcal{H}Y = \Psi^{-1}(D_{\mathcal{H}X}\overline{\mathcal{H}Y}) \in \mathfrak{L}(\f)$$
because $\overline{\mathcal{H}Y}\in \mathfrak{l}(\f)$, and hence $D_{\mathcal{H}X}\overline{\mathcal{H}Y} \in \mathfrak{l}(\f)$ by holonomy-invariance. 
\end{proof}
\begin{remark}
  When a bundle-like affine structure $(\mathcal{H}, \overline{\nabla})$ and a transverse affine structure $[\Hat{\nabla}]$ are related as in Props. \ref{prop: tac produces bas} and \ref{good-new-switcheroo}, we say that they are \textit{associated} with each other. They have a duality analogous to that between a transverse semi-Riemannian metric on $(M,\f)$ and an associated bundle-like metric. Bundle-like affine structures, holonomy-invariant connections on $\nu \f$ and transverse affine structures convey the same transverse affine information on $(M,\f)$, and we can go back and forth among them according to convenience. 
\end{remark}

We can also guarantee that given a Haefliger cocycle we reproduce Wolak's definition. 
\begin{proposition}\label{locally-affine}
A regular foliation $(M, \mathcal{F})$ admits a transverse affine structure on $(M, \mathcal{F})$ if, and only if, it is transversely affine [in the sense of Wolak's definition].
\end{proposition}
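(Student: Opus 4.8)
The plan is to reduce the statement, via the dictionary already established in this section, to a single clean assertion about the normal bundle and then prove that assertion by a local descent argument. By Proposition \ref{good-new-switcheroo}(ii) a transverse affine structure on $(M,\mathcal{F})$ is the same datum as a holonomy-invariant connection $D$ on $\nu\mathcal{F}$ (Definition \ref{transaffdefi}), so it suffices to prove that $\mathcal{F}$ admits such a $D$ if and only if it is transversely affine in Wolak's sense. Refining the Haefliger cocycle if necessary, I may assume each local submersion $s_\alpha\colon\mathcal{U}_\alpha\to\underaccent{\check}{\mathcal{U}}_\alpha$ has connected fibers; then Lemma \ref{beconcrete}(iv) furnishes, for each $\alpha$, a Lie-algebra and $C^\infty$-module isomorphism $\phi_\alpha\colon\mathfrak{X}(\underaccent{\check}{\mathcal{U}}_\alpha)\to\mathfrak{l}(\mathcal{F}|_{\mathcal{U}_\alpha})$, together with the identification $ds_\alpha\colon\nu\mathcal{F}|_{\mathcal{U}_\alpha}\cong s_\alpha^\ast T\underaccent{\check}{\mathcal{U}}_\alpha$. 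The whole argument then amounts to transporting a connection back and forth across these local isomorphisms and checking compatibility on overlaps.

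For the direction that a holonomy-invariant $D$ produces Wolak's data, I would push $D$ down through each $s_\alpha$. Given $X,Y\in\mathfrak{X}(\underaccent{\check}{\mathcal{U}}_\alpha)$, choose projectable lifts $X^\ast,Y^\ast$ (Lemma \ref{beconcrete}(i)--(ii)) and set $\underaccent{\check}{\nabla}^\alpha_XY:=\phi_\alpha^{-1}(D_{X^\ast}\overline{Y^\ast})$, which lands in $\mathfrak{l}(\mathcal{F}|_{\mathcal{U}_\alpha})$ precisely by holonomy-invariance. The one delicate point is independence of the chosen lift $X^\ast$: two lifts differ by a vertical field $V$, and well-definedness needs $D_V\overline{Y^\ast}=0$. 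This I would extract from holonomy-invariance by a short tensoriality argument: since $fV$ is vertical for every $f\in C^\infty(M)$, $C^\infty(M)$-linearity in the first slot gives $D_{fV}\overline{Y^\ast}=f\,D_V\overline{Y^\ast}$, and both sides must be holonomy-invariant sections of $\nu\mathcal{F}$; but a nonzero holonomy-invariant section multiplied by a non-basic $f$ is never holonomy-invariant, forcing $D_V\overline{Y^\ast}=0$. With this in hand, $\underaccent{\check}{\nabla}^\alpha$ is readily checked to be $C^\infty$-linear in $X$ and Leibniz in $Y$ (using $X^\ast(g\circ s_\alpha)=(Xg)\circ s_\alpha$), hence a genuine affine connection on $\underaccent{\check}{\mathcal{U}}_\alpha$. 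Finally, because all the $\underaccent{\check}{\nabla}^\alpha$ are push-downs of one global $D$ and $s_\alpha=\gamma_{\alpha\beta}\circ s_\beta$ on overlaps, naturality of the push-down yields $\gamma_{\alpha\beta}^\ast\underaccent{\check}{\nabla}^\beta=\underaccent{\check}{\nabla}^\alpha$, which is exactly Wolak's condition.

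For the converse I would run the same correspondence in reverse. Given Wolak data $\underaccent{\check}{\nabla}^\alpha$, transport each to a connection $D^\alpha$ on $\nu\mathcal{F}|_{\mathcal{U}_\alpha}$ so that $\phi_\alpha$ intertwines $\underaccent{\check}{\nabla}^\alpha$ with $D^\alpha$ on transverse fields, and extend by the Leibniz rule to all sections using a transverse frame $\phi_\alpha(\partial_{x^a})$. Since the resulting Christoffel symbols are pulled back through $s_\alpha$ and hence basic, each $D^\alpha$ is holonomy-invariant in the sense of Proposition \ref{definetransverseaffine}. The compatibility $\gamma_{\alpha\beta}^\ast\underaccent{\check}{\nabla}^\beta=\underaccent{\check}{\nabla}^\alpha$ translates, under the identifications $\phi_\alpha,\phi_\beta$, into $D^\alpha=D^\beta$ on $\mathcal{U}_\alpha\cap\mathcal{U}_\beta$, so the local connections agree on overlaps and glue to a single global holonomy-invariant connection $D$ on $\nu\mathcal{F}$. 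Proposition \ref{good-new-switcheroo}(ii) then converts $D$ into the desired transverse affine structure.

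I expect the main obstacle to be bookkeeping rather than conceptual: verifying the naturality of the push-down under $\gamma_{\alpha\beta}$ and matching it cleanly with the pullback-connection convention in Wolak's definition requires care with the several identifications ($ds_\alpha$, $\phi_\alpha$, the $s_\alpha$-relatedness of lifts). The one genuinely substantive step is the vanishing $D_V\overline{Y^\ast}=0$ along vertical directions, which is what makes the descent well-defined; everything else is a matter of unwinding definitions. As a cross-check, this equivalence can alternatively be obtained less explicitly by identifying holonomy-invariant connections on $\nu\mathcal{F}$ with the basic (Kamber--Tondeur) connections recalled in Remark \ref{basicconn}, whose existence is already known to characterize Wolak's transversely affine foliations.
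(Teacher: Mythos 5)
Your argument is correct, but it takes a genuinely more structural route than the paper's. The paper works directly with a transverse affine connection $\hat{\nabla}$ on $TM$: for the forward direction it computes in foliated coordinates, observes that condition (3) of Definition \ref{tasdefi1} forces $\partial\hat{\Gamma}^a_{\mu\nu}/\partial x^i=0$, and uses these basic Christoffel symbols to define the local connections on the local quotients; for the converse it writes down an explicit local formula (involving a chosen partner connection $\omega$ on $T\f$) that reassembles a transverse affine connection on $M$ from Wolak's data. You instead factor everything through Proposition \ref{good-new-switcheroo}(ii), reducing the statement to the existence of a holonomy-invariant connection $D$ on $\nu\f$, and then run a coordinate-free descent along the local submersions. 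This buys a cleaner argument --- in particular the reconstruction of a connection on $TM$ together with a partner connection, which the paper redoes by hand in the converse direction, is outsourced to \ref{good-new-switcheroo}(ii) --- at the price of having to isolate the one substantive fact the coordinate computation gives for free, namely that $D_V\sigma=0$ for $V$ vertical and $\sigma$ transverse. Your tensoriality argument for this is valid: $fV$ is again vertical hence projectable, so $fD_V\sigma$ is holonomy-invariant for every $f$, which forces $D_V\sigma=0$ wherever the leaves are positive-dimensional (and the zero-dimensional case is vacuous); this is exactly the invariant counterpart of the paper's observation that the transverse Christoffel symbols have vanishing leafwise derivatives. The refinement to a cocycle with connected fibers, needed so that Lemma \ref{beconcrete}(iv) makes $\phi_\alpha$ an isomorphism, is harmless in both directions, and the overlap compatibility is the same bookkeeping in either proof. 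Your closing cross-check via Remark \ref{basicconn} should be used only as a sanity check here, since the paper's Corollary identifying transverse affine structures with basic principal connections is itself deduced from this Proposition.
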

\begin{proof}
$(\impliedby)$ Suppose $\Hat{\nabla}$ is a transverse affine connection on $(M, \mathcal{F})$. Let $(\mathcal{U}_\alpha, s_\alpha, \gamma_{\alpha\beta})_{\alpha, \beta\in A}$ be a Haefliger's cocycle defining $\mathcal{F}$ and choose $\alpha, \beta\in A$ such that $\mathcal{U}_\alpha\cap\mathcal{U}_\beta\neq\emptyset$. To check Wolak's requirements, must construct affine connections on $\underaccent{\check}{\mathcal{U}}_{\alpha}$ and $\underaccent{\check}{\mathcal{U}}_{\beta}$ such that $\gamma_{\alpha\beta}$ is an affine transformation. Let $((x^a)_{a=1}^q,(x^i)_{i=q+1}^n)$ be a foliated system of coordinates on $\mathcal{U}_\alpha\cap\mathcal{U}_\beta$. Let $\Hat{\Gamma}^\lambda_{\nu\mu}$ be the Christoffel symbols of $\Hat{\nabla}$ with respect to the $(x^\mu)_{\mu=1}^n$ coordinates. Note that the Lie derivative condition on $\Hat{\nabla}$ implies that the vectors $$
\left(\mathcal{L}_{\frac{\partial}{\partial x^i}}\Hat{\nabla}\right)\left(\frac{\partial}{\partial x^\mu}, \frac{\partial}{\partial x^\nu}\right)=\left[\frac{\partial}{\partial x^i},\Hat{\nabla}_{\frac{\partial}{\partial x^\mu}}\frac{\partial}{\partial x^\nu}\right]=\left[\frac{\partial}{\partial x^i},\Hat{\Gamma}^\lambda_{\nu\mu}\frac{\partial}{\partial x^\lambda}\right]=\frac{\partial}{\partial x^i}\left(\Hat{\Gamma}^\lambda_{\nu\mu}\right)\frac{\partial}{\partial x^\lambda}
$$ are vertical. But this means precisely that $\frac{\partial \Hat{\Gamma}^a_{\mu\nu}}{\partial x^i}=0$, for $\mu,\nu=1,\ldots, n; a=1,\ldots, q; i=q+1,\ldots, n$. Thus, we are able to unambiguously define, for each $a, b, c=1,\ldots q$, functions ${}^\alpha\Gamma^c_{ba}$ and ${}^\beta\Gamma^c_{ba}$ respectively on $\underaccent{\check}{\mathcal{U}}_{\alpha}$ and $\underaccent{\check}{\mathcal{U}}_{\beta}$, by the relation $$
{}^\alpha\Gamma^c_{ba}\circ s_\alpha=\Hat{\Gamma}^c_{ba}={}^\beta\Gamma^c_{ba}\circ s_\beta.
$$
Now, by a simple ``project, modify and lift'' construction, we can assume that each $x^a$ has been chosen so that $\{\frac{\partial}{\partial x^a}:a=1,\ldots, q\}$ projects simultaneously to bases for $\mathfrak{X}(\underaccent{\check}{\mathcal{U}}_{\alpha})$ and $\mathfrak{X}(\underaccent{\check}{\mathcal{U}}_{\beta})$, which we denote respectively by $(E_a)_{i=1}^q$ and $(F_a)_{i=1}^q$. For any $X, Y\in \mathfrak{X}(\underaccent{\check}{\mathcal{U}}_{\alpha})$ write $X=X^aE_a, Y=Y^aE_a$ and for any $Z,W\in \mathfrak{X}(\underaccent{\check}{\mathcal{U}}_{\beta})$ write $Z=Z^aF_a$ and $W=W^aF_a$. Thus, we can define \begin{align*}
\underaccent{\check}{\nabla}^{\alpha}_XY&:=X^a\left(E_a(Y^c)+{}^\alpha\Gamma^c_{ba}Y^b\right)E_c\\
\underaccent{\check}{\nabla}^{\beta}_ZW&:=Z^a\left(F_a(W^c)+{}^\beta\Gamma^c_{ba}W^b\right)F_c
\end{align*}
It is easy to check that these formulae define affine connections and from the cocycle condition $\gamma_{\beta\alpha}\circ s_\alpha|_{\mathcal{U}_\alpha\cap\mathcal{U}_\beta}=s_\beta|_{\mathcal{U}_\alpha\cap\mathcal{U}_\beta}$ we have that $\gamma_{\beta\alpha*}(E_a)=F_a$ for all $a$. It readily follows that $$
\gamma^*_{\beta\alpha}\underaccent{\check}{\nabla}^{\beta}=\underaccent{\check}{\nabla}^{\alpha}
$$
$(\implies)$ Cover $M$ by foliated charts $(\mathcal{U}_\alpha, s_\alpha)_{\alpha\in A}$. By hypothesis, for each $\alpha\in A$ there is an affine connection $\nabla^{*\alpha}$ on $\mathcal{U}_\alpha$ such that whenever $\mathcal{U}_\alpha\cap\mathcal{U}_\beta\neq\emptyset$ we have $\nabla^{*\alpha}=\gamma^*_{\beta\alpha}\nabla^{*\beta}$. Let $(E_a)_{a=1}^q$ be a basis for $\mathfrak{X}(\mathcal{U}^{*}_{\alpha})$ and write $\nabla^{*\alpha}_{E_b}E_a={}^\alpha\Gamma^c_{ab}E_c$. Choose any torsion-free linear connection $\omega:\mathfrak{X}(M)\times\Gamma(T\mathcal{F})\rightarrow\Gamma(T\mathcal{F})$ and denote its restriction to $\mathcal{U}_\alpha$ by $\omega^\alpha$. Choose a basis $(X_\mu)_{\mu=1}^n$ for $\mathfrak{X}(\mathcal{U}_{\alpha})$ such that $s_{\alpha*}(X_a)=E_a$ for $a=1,\ldots, q$ and each $X_i$ is vertical for $i=q+1,\ldots, n$.
We define the connection $\nabla^\alpha$ on $\mathcal{U}_\alpha$ by giving its values on the chosen basis and extending by linearity and product rule:$$
\nabla^\alpha_{X_\nu}X_\mu=\delta^i_\mu\left(\omega^\alpha_{X_\nu}X_i\right)+\delta^i_\nu\left([X_i,X_\mu]+\omega^\alpha_{X_\mu}X_i\right)+\delta^b_\nu\delta^a_\mu\left({}^\alpha\Gamma^c_{ab}\circ s_\alpha\right)X_c.
$$ If we do this for every $\alpha\in A$, then the transversely affine hypothesis implies that $\nabla^\alpha$ and $\nabla^\beta$ agree on $\mathcal{U}_\alpha\cap\mathcal{U}_\beta$. Therefore, we can define an affine connection $\Hat{\nabla}$ on $M$ with partner connection $\omega$. By construction, each $\nabla^\alpha$ is holonomy-invariant, which means that the condition $\langle\mathcal{L}_V\Hat{\nabla}$ is vertical $\rangle$ is satisfied and therefore $\Hat{\nabla}$ is a transverse affine connection on $M$.
\end{proof}
The following result is now immediate from Proposition \ref{locally-affine} and the discussion in Remark \ref{basicconn}.
\begin{corollary}
  There is a one-to-one correspondence between transverse affine structures on the foliated manifold $(M,\f)$ and basic principal connections on the principal frame bundle $F_\intercal(M,\mathcal{F})$ of transverse frames.
\end{corollary}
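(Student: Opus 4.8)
The plan is to obtain the stated bijection by composing two correspondences that are already at our disposal. The first is internal to our framework: Proposition \ref{good-new-switcheroo}(ii) provides a one-to-one correspondence between transverse affine structures on $(M,\f)$ and holonomy-invariant connections on the normal bundle $\nu\f$ in the sense of Definition \ref{transaffdefi}. This reduces the corollary to producing a bijection between holonomy-invariant connections on $\nu\f$ and basic principal connections on $F_\intercal(M,\f)$. The second correspondence is imported from the Kamber--Tondeur/Molino theory recalled in Remark \ref{basicconn}: by \cite[Lemma 2.7]{kambertondeur}, adapted (equivalently, transverse) principal connections $\omega$ on $F_\intercal(M,\f)$ are in one-to-one correspondence with arbitrary linear connections $D$ on $\nu\f$.

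With these two tools in hand, I would restrict the Kamber--Tondeur bijection to its ``basic locus'': a principal connection $\omega$ is basic precisely when it is adapted and, in addition, holonomy-invariant as a $1$-form with respect to the lifted foliation $\tilde{\f}$ (i.e. $\mathcal{L}_V\omega=0$ for every $\tilde{\f}$-vertical $V$). The task is then to verify that, under the identification $\omega\leftrightarrow D$, this upstairs holonomy-invariance is equivalent to $D$ being holonomy-invariant in the sense of Definition \ref{transaffdefi}, that is, to $D_X\mathfrak{l}(\f)\subset\mathfrak{l}(\f)$ for all $X\in\mathfrak{L}(\f)$. Here Proposition \ref{locally-affine} does the decisive bookkeeping: it certifies that the existence of a Definition-\ref{transaffdefi}-holonomy-invariant $D$ is equivalent to $\f$ being transversely affine in Wolak's sense, which is exactly the condition under which basic connections on $F_\intercal(M,\f)$ exist. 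Chart by chart, both invariance conditions amount to the local transverse Christoffel symbols $\hat{\Gamma}^a_{\mu\nu}$ of the induced connection being basic functions, so the two classes of objects match under the Kamber--Tondeur bijection. Composing with Proposition \ref{good-new-switcheroo}(ii) then yields the desired one-to-one correspondence.

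The genuinely nontrivial point, and hence the main obstacle, is precisely this matching of two a priori distinct notions of holonomy-invariance living on different bundles: the vanishing of $\mathcal{L}_V\omega$ on the total space $F_\intercal(M,\f)$ versus the stability condition $D_X\mathfrak{l}(\f)\subset\mathfrak{l}(\f)$ on $\nu\f$. I expect the cleanest route to be the local/coordinate computation underlying Proposition \ref{locally-affine}, showing that both conditions are equivalent, on each foliated chart, to the basicness of the $\hat{\Gamma}^a_{\mu\nu}$; once this is established the corollary follows formally by composing the two bijections. Everything else --- linearity, naturality, and the fact that equivalent transverse affine connections induce the same $D$ and hence the same $\omega$ --- is routine.
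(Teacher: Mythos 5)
Your proposal is correct, and it is close in spirit to the paper's own (very terse) justification, but the two arguments place the weight on different internal results, and the comparison is worth making. The paper declares the corollary ``immediate from Proposition \ref{locally-affine} and the discussion in Remark \ref{basicconn}'': that is, it goes through the Haefliger-cocycle/Wolak formulation and then invokes the Kamber--Tondeur/Molino theory. Strictly speaking, however, Proposition \ref{locally-affine} and the existence statement quoted in Remark \ref{basicconn} only give an ``exists iff exists'' equivalence, not by themselves a bijection of the two sets of objects. Your route --- composing the genuine bijection of Proposition \ref{good-new-switcheroo}(ii) between transverse affine structures and holonomy-invariant connections on $\nu\f$ with the Kamber--Tondeur correspondence between adapted principal connections on $F_\intercal(M,\f)$ and linear connections on $\nu\f$, restricted to the basic locus --- is the more honest way to produce the claimed one-to-one correspondence, and you correctly isolate the one step that actually requires verification: that under the Kamber--Tondeur identification, basicness of $\omega$ (vanishing of $\mathcal{L}_V\omega$ for $\tilde{\f}$-vertical $V$) matches holonomy-invariance of $D$ in the sense of Definition \ref{transaffdefi}. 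Your suggestion to check this chart by chart via the basicness of the transverse Christoffel symbols, reusing the computation inside the proof of Proposition \ref{locally-affine}, is exactly the right mechanism; in foliated coordinates both conditions reduce to $\partial\hat{\Gamma}^a_{\mu\nu}/\partial x^i=0$. In short, your decomposition buys an explicit bijection where the paper's citation pattern only transparently delivers existence, at the cost of one additional (routine but genuinely necessary) local verification.
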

\qcd

For the rest of this section we perform some consistency checks for our definitions in two important contexts: submersions and semi-Riemannian foliations.
\begin{proposition}\label{push-pull-submersions}
Let $F:M\rightarrow N$ be an onto smooth submersion with connected fibers, and denote by $\mathcal{F}$ the foliation given by the fibers of $F$. Consider the expression:
\begin{equation}\label{what-must-be-upheld}
F_*(\hat{\nabla}_XY)=\nabla_{(F_*X)}(F_*Y), \forall X,Y\in\mathfrak{L}(\f).    
\end{equation}
\begin{enumerate}
    \item [(a)] Suppose that $(M,\mathcal{F})$ is given a transverse affine connection $\hat{\nabla}$. Then, there exists a unique affine connection $\nabla$ on $N$ such that equation \eqref{what-must-be-upheld} holds.
    \item [(b)] Suppose $N$ is given an affine connection $\nabla$. Then, there exists a unique transverse affine structure $[\nabla^\prime]$ on $(M,\f)$ such that for every $\hat{\nabla}\in[\nabla^\prime]$ equation \eqref{what-must-be-upheld} holds.
    \end{enumerate}
In situation $(a)$, we write $\nabla=:F_*\hat{\nabla}$, and call it the \emph{pushforward} of $\hat{\nabla}$, whereas in situation $(b)$, we write $[\nabla^\prime]=:F^*\nabla$, and call it the \emph{pullback} of $\nabla$.
\end{proposition}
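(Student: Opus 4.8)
The backbone of both parts is Lemma \ref{beconcrete}: since $F$ is onto with connected fibers, the projectable fields are precisely the lifts of fields on $N$, the projection $F_*$ restricts to a surjection $\mathfrak{L}(\mathcal{F})\to\mathfrak{X}(N)$ with kernel $\mathfrak{X}(\mathcal{F})$, and $\phi\colon\mathfrak{X}(N)\to\mathfrak{l}(\mathcal{F})$ is an isomorphism of Lie algebras and of $C^\infty(N)$-modules. I will also lean throughout on the computation recorded in \eqref{derivative is projectable}, which guarantees that a transverse affine connection sends a pair of projectable fields to a projectable field, so that the expressions appearing in \eqref{what-must-be-upheld} are meaningful.

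For $(a)$, equation \eqref{what-must-be-upheld} leaves no freedom: for $\xi,\eta\in\mathfrak{X}(N)$ I must set $\nabla_\xi\eta:=F_*(\hat\nabla_{\xi^*}\eta^*)$ for any lifts $\xi^*,\eta^*$, which is legitimate precisely because $\hat\nabla_{\xi^*}\eta^*$ is projectable by \eqref{derivative is projectable}. The crux is independence of the chosen lifts. Replacing $\eta^*$ by another lift changes it by a vertical $W$, contributing $\hat\nabla_{\xi^*}W=\omega_{\xi^*}W$ (Definition \ref{tasdefi1}(1)), which is vertical; replacing $\xi^*$ changes it by a vertical $V$, contributing $\hat\nabla_V\eta^*=[V,\eta^*]+\omega_{\eta^*}V$ (Definition \ref{tasdefi1}(2)), again vertical since $\eta^*$ is projectable. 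Both corrections therefore die under $F_*$, so $\nabla$ is well defined; the $\mathbb{R}$-bilinearity, the $C^\infty(N)$-homogeneity in $\xi$ and the Leibniz rule in $\eta$ then follow routinely from the module structure of lifts described in the Remark following Lemma \ref{beconcrete}, using $\xi^*(f\circ F)=(\xi f)\circ F$. Uniqueness is immediate, as \eqref{what-must-be-upheld} together with the surjectivity of $F_*$ on projectable fields pins down every value $\nabla_\xi\eta$.

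For $(b)$ I will manufacture the object on the normal bundle and then appeal to Proposition \ref{good-new-switcheroo}(ii). The clean device is the canonical isomorphism $\overline{dF}\colon\nu\mathcal{F}\xrightarrow{\ \sim\ }F^*TN$ induced fiberwise by $dF_p\colon T_pM/\ker dF_p\to T_{F(p)}N$, under which $\mathfrak{l}(\mathcal{F})$ is identified with the pulled-back fields $\hat\xi$, $\xi\in\mathfrak{X}(N)$. Transporting the pullback connection $F^*\nabla$ across $\overline{dF}$ produces a connection $D$ on $\nu\mathcal{F}$ satisfying $D_W\overline{\eta^*}=\overline{(\nabla_{F_*W}\eta)^*}$ for projectable $W$; this single identity shows at once that $D$ is holonomy-invariant in the sense of Definition \ref{transaffdefi}. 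Proposition \ref{good-new-switcheroo}(ii) then returns a unique transverse affine structure $[\nabla']$ inducing $D$, and rereading the same identity through $\overline{dF}$ gives exactly \eqref{what-must-be-upheld} for every $\hat\nabla\in[\nabla']$.

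The genuine obstacle I anticipate is the uniqueness assertion in $(b)$, since \eqref{what-must-be-upheld} constrains a candidate structure only through its induced action $D_W\tau$ on transverse sections $\tau$ along projectable directions $W$. To upgrade this to uniqueness of the full transverse affine structure I will argue that such values already determine $D$ on all of $\nu\mathcal{F}$: projectable fields span $TM$ pointwise, and lifts of a local frame on $N$ furnish a $C^\infty(M)$-frame of $\nu\mathcal{F}$ consisting of transverse fields, so $C^\infty(M)$-linearity in the direction, the Leibniz rule in the section, and locality reduce every value of $D$ to the constrained ones; Proposition \ref{good-new-switcheroo}(ii) then forces the structure to be unique. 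Everything else—the connection axioms and the verification of the three conditions of Definition \ref{tasdefi1} for the connections produced along the way—I regard as routine bookkeeping to be dispatched by direct computation.
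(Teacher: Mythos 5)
Your proof is correct. Part $(a)$ is essentially the paper's own argument: both define $\nabla_\xi\eta$ by pushing forward $\hat\nabla_{\xi^*}\eta^*$ and check independence of lifts by showing that the vertical corrections $\hat\nabla_{\xi^*}W=\omega_{\xi^*}W$ and $\hat\nabla_V\eta^*=[V,\eta^*]+\omega_{\eta^*}V$ die under $F_*$. Part $(b)$, however, takes a genuinely different and arguably cleaner route. The paper works entirely on $M$: it defines a partial connection $\overline{\nabla}_XY:=(\nabla_{F_*X}F_*Y)^*$ on horizontal foliate fields, extends it to all of $\Gamma(\mathcal{H})$ by expanding in local frames of horizontal lifts, glues these local ``connection-like'' maps, and finally assembles a full transverse affine connection by adding vertical terms built from an auxiliary connection $\mathfrak{D}$. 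You instead transport the standard pullback connection on $F^*TN$ across the canonical isomorphism $\nu\mathcal{F}\cong F^*TN$ to obtain a connection $D$ on the normal bundle satisfying $D_W\overline{\eta^*}=\overline{(\nabla_{F_*W}\eta)^*}$, observe that this identity together with Lemma \ref{beconcrete}(iv) gives holonomy-invariance in the sense of Definition \ref{transaffdefi}, and then invoke the bijection of Proposition \ref{good-new-switcheroo}(ii). This outsources all the frame-gluing bookkeeping to the already-established correspondence and to standard pullback-bundle theory. A further point in your favor: you spell out why \eqref{what-must-be-upheld} pins down $D$ on all of $\nu\mathcal{F}$ (tensoriality in the direction, Leibniz in the section, and local frames of transverse fields), whereas the paper's proof of $(b)$ leaves the uniqueness assertion essentially implicit. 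Both approaches are valid; yours trades the paper's explicit construction for a more structural argument.
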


\begin{proof}
\begin{enumerate}
\item [(a)]  Given $U,V\in\mathfrak{X}(N)$, by Lemma \ref{beconcrete}(i) we consider respectively lifts $U^*, V^*$ of $U$ and $V$, which by Lemma \ref{beconcrete}(ii) are projectable; hence by equation \eqref{derivative is projectable}, we have that $\hat{\nabla}_{U^*}V^*$ is a projectable vector field. The projection $\overline{\hat{\nabla}_{U^\ast}V^\ast }\in \mathfrak{l}(\f)$ does not depend on the choice of lifts of $U$ and $V$. Indeed, if $U^{**}$ and $V^{**}$ are other lifts of $U$ and $V$, respectively, then we would have $U^{**}=U^*+W_1$ and $V^{**}=V^*+W_2$, for some $W_1,W_2\in\mathfrak{X}(\f)$ (cf. Lemma \ref{beconcrete}(ii)). Then
\begin{align*}
\hat{\nabla}_{U^{**}}V^{**}&=\hat{\nabla}_{U^*}V^*+\hat{\nabla}_{U^*}W_2+\hat{\nabla}_{W_1}V^*+\hat{\nabla}_{W_1}W_2\\
&=\hat{\nabla}_{U^*}V^*+\omega_{U^*}W_2+[W_1,V^*]+\omega_{V^*}W_1+\omega_{W_1}W_2,
\end{align*}where $\omega$ is a partner connection, and hence $\overline{\hat{\nabla}_{U^*}V^*}=\overline{\hat{\nabla}_{U^{**}}V^{**}}$ as claimed. Therefore, because we are assuming $F$ has connected fibers, Lemma \ref{beconcrete}(iv) implies that this uniquely defined transverse field projects to a unique $\mathcal{Z}(U,V) \in \mathfrak{X}(N)$; if we set $\nabla_UV:= \mathcal{Z}(U,V)$, then $\nabla$ thus defined is easily checked to be an affine connection on $N$ satisfying \eqref{what-must-be-upheld}. Uniqueness is also clear from the construction. 

\item [(b)] We begin by fixing some horizontal distribution $\mathcal{H}$. Given any $U\in \mathfrak{X}(N)$, $U^\ast\in \Gamma(\mathcal{H})$ will denote its unique $\mathcal{H}$-horizontal lift, which again is projectable by Lemma \ref{beconcrete}(ii). 

Define a $\mathbb{R}$-bilinear\footnote{Observe that for any basic $f\in C^\infty(\f)$, there exists a unique $f_\ast \in C^\infty(N)$ such that $f= f_\ast \circ F$, and $(f_\ast\cdot  U)^\ast = f\cdot U^\ast$, for any $U\in \mathfrak{X}(N)$.} map on horizontal foliate vector fields as: 
$$\overline{\nabla}: (X,Y) \in (\mathfrak{L}(\f)\cap \Gamma(\mathcal{H}))^2 \mapsto \overline{\nabla}_XY := (\nabla _{F\ast X}F_\ast Y)^\ast \in \mathfrak{L}(\f)\cap \Gamma(\mathcal{H}),$$
where we denote by $F_\ast X, F_\ast Y \in \mathfrak{X}(N)$ the unique vector fields on $N$ having $X,Y$ as horizontal lifts, respectively. We can show that for any basic function $f\in C^\infty(\f)$ we have 
\begin{equation}\label{eqparasub1}
\overline{\nabla}_{f\cdot X}Y = f\cdot \overline{\nabla}_XY \text{ and }\overline{\nabla}_X(f\cdot Y) = X(f)\cdot Y + f\cdot \overline{\nabla}_XY,
\end{equation}
so $\overline{\nabla}$ is a ``partially defined connection''. 

In order extend it to a full-blooded connection, we recall that horizontal vector fields can be locally generated by horizontal foliated fields in the following sense. Given any $x\in M$, we can pick a local frame of vector fields $U_1, \ldots, U_q$ defined on some open set $F(x)\in \mathcal{U}\subset N$, so $F|_{F^{-1}(\mathcal{U})}$ is a submersion onto $\mathcal{U}$ with connected fibers --- in fact the open set $F^{-1}(\mathcal{U})$ is \textit{saturated}, i.e. a union of the leaves of our $\f$. Hence the horizontal foliate lifts $U_1^\ast , \ldots, U_q^\ast$ form a local frame on $F^{-1}(\mathcal{U})$, and hence given any horizontal vector field $X \in \Gamma(\mathcal{H})$ (not necessarily projectable) there exist unique smooth functions $f_1,\ldots, f_q \in C^\infty(F^{-1}(U))$ such that 
$$X = \sum _{a=1}^q f_a \cdot U^\ast_a = f_aU^\ast_a, $$
where, recall, the last equality arises because of Einstein summation conventions we have adopted (conf. last paragraph of the previous section). 
Taking this fact into account, given any other horizontal vector field $Y \in \Gamma(\mathcal{H})$, if we write $Y = g_b\cdot U^\ast _b$ (again with our summation convention!), then we can define
\begin{equation}\label{eqparasub2}
    \overline{\nabla}^{\mathcal{U}}_XY:= f_aU^\ast_a(g_b)\cdot U^\ast_b + f_ag_b \cdot \overline{\nabla}_{U^\ast_a}U^\ast_b,
\end{equation}
this equation defines a smooth, locally defined horizontal vector field, which via straightforward computation can be shown not to depend on the choice of the local frame, and for any $f\in C^\infty(M)$, we easily see that equations analogous to \eqref{eqparasub1} hold here. We shall refer to this by saying that $\overline{\nabla}^{\mathcal{U}}$ is a (locally defined) ``connection-like'' map. By gluing pieces of the form \eqref{eqparasub2} on some open cover of $N$, these local pieces add up to a well-defined ``connection-like'' map 
$$\widetilde{\nabla}: (X,Y) \in \Gamma(\mathcal{H})^2 \mapsto \widetilde{\nabla}_XY \in \Gamma(\mathcal{H}).$$

Finally, pick an affine connection $\mathfrak{D}$ on $M$, and define the affine connection given by
$$
\Hat{\nabla}_XY:=\mathcal{V}\left(\mathfrak{D}_X\mathcal{V}Y\right)+[\mathcal{V}X, \mathcal{H}Y]+\mathcal{V}\left(\mathfrak{D}_{\mathcal{H}Y}\mathcal{V}X\right)+\mathfrak{\widetilde{\nabla}}_{\mathcal{H}X}\mathcal{H}Y\ \quad \forall X,Y \in \mathfrak{X}(M).
$$ 
Just as in the proof of Proposition \ref{good-new-switcheroo}(ii), $\Hat{\nabla}$ is a transverse affine connection with $\mathcal{V}\left(\nabla_{(\cdot)}\mathcal{V}(\cdot)\right)$ defining a partner connection for it. Thus, $[\Hat {\nabla}]$ is the desired transverse affine structure. 
\end{enumerate}
\end{proof}

\begin{remark}\label{push-pull-local-diffeo}
Another interesting situation is when we have two foliated manifolds $(M,\mathcal{F}^M)$ and $(N,\mathcal{F}^N)$, and a local diffeomorphism --- say, a covering map --- $F:M\rightarrow N$ which sends leaves of $\mathcal{F}^M$ into leaves of $\mathcal{F}^N$, and we want to move affine information between them. Note that $F_*$ maps $\mathcal{F}^M$-vertical vectors into $\mathcal{F}^N$-vertical vectors, but not necessarily surjectively. It will become clear that it is necessary that only $\mathcal{F}^M$-vertical vectors are mapped into $\mathcal{F}^N$-vertical vectors. For example, consider $id:\mathbb{R}^3\rightarrow\mathbb{R}^3$, where the first $\mathbb{R}^3$ is foliated by $T\mathcal{F}^1=\mathbb{R}\frac{\partial}{\partial y}$ and the second one is foliated by $T\mathcal{F}^2=\mathbb{R}\frac{\partial}{\partial x}\oplus\mathbb{R}\frac{\partial}{\partial y}$. It is clear that $id$ is a foliated diffeomorphism. The usual flat connection $\nabla^\mathrm{flat}$ of $\mathbb{R}^3$ is a transverse affine connection to both foliations, albeit associated with different partner connections $\omega^1$ and $\omega^2$ (each of them is just $\nabla^\mathrm{flat}$ restricted to the respective vertical distributions). However, $id^*\nabla^\mathrm{flat}=\nabla^\mathrm{flat}$ and $id^*\omega^2=\omega^2$, which does not result in a transverse affine connection in the source foliation.

To achieve our goal, we must require furthermore that $\mathcal{F}^M$ and $\mathcal{F}^N$ are of the same dimension, so that vertical vectors on $TM$ are in one-to-one correspondence with the vertical vectors in $TN$. Indeed, if $(N,\mathcal{F}^N)$ is endowed with a transverse affine connection $\hat{\nabla}^N$, we can define the \textit{pullback} $F^*\hat{\nabla}^N$ as follows: for $X, Y \in\mathfrak{X}(M)$, we set $(F^*\hat{\nabla}^N)_XY=Z$, where $Z$ is given by: For $p\in M$, let $\mathcal{U}$ be any neighborhood of $p$ for which $\Phi:=F|_\mathcal{U}$ is a diffeomorphism onto its (open) image. Then, $Z(p):=d\Phi_p^{-1}\left(\hat{\nabla}^N_{\Phi_*X}\Phi_*Y\right)_{\Phi(p)}$. If $\omega^N$ is the partner connection associated with $\hat{\nabla}^N$, then we can check that, for any $V\in\mathfrak{X}(\mathcal{F}^M)$ and any $X\in\mathfrak{X}(M)$ we have\begin{align*}
(F^*\hat{\nabla}^N)_XV&=(F^*\omega^N)_XV,\\
(F^*\hat{\nabla}^N)_VX&=[V,X]+(F^*\omega^N)_XV.
\end{align*} As for the holonomy invariance condition, given $V\in\mathfrak{X}(\mathcal{F}^M)$, and $X, Y\in\mathfrak{L}(\mathcal{F}^M)$ we have \begin{align*}
(\mathcal{L}_V(F^*\hat{\nabla}^N))(X,Y)=[V, (F^*\hat{\nabla}^N)_XY]-(F^*\hat{\nabla}^N)_{[V,X]}Y-(F^*\hat{\nabla}^N)_X[V,Y].
\end{align*}
Note that the two properties that we already established imply that the two last terms above are vertical. As for the first one, note that since $F$ is locally a diffeomorphism, on a neighborhood of each point we can write $V=\Phi_*^{-1}(W)$ for some $W\in\mathfrak{X}(\mathcal{F}^N)$. Then we are left with $$
[V, (F^*\hat{\nabla}^N)_XY]=\left[\Phi_*^{-1}(W),\Phi_*^{-1}\left(\hat{\nabla}^N_{\Phi_*X}\Phi_*Y\right)\right]=\Phi_*^{-1}\left[W,\hat{\nabla}^N_{\Phi_*X}\Phi_*Y\right],
$$which is vertical because $\hat{\nabla}^N$ is a transverse affine connection and $\Phi_*$ stablishes a one-to-one correspondence between vertical vectors.

In the converse direction, if now $(M,\mathcal{F}^M)$ is given a transverse affine connection $\hat{\nabla}^M$, the necessary and sufficient condition for the existence of the pushforward via $F$ is that $F$ is one-to-one, (that is, a full diffeomorphism). When this is the case, we can set the \textit{pushforward} $(F_*\hat{\nabla}^M)_XY=W$, for $X,Y\in\mathfrak{X}(N)$, where $W$ is constructed as: given $q\in N$, let $\mathcal{U}$ be any neighborhood of $p=F^{-1}(q)$ for which $\Phi:=F|_\mathcal{U}$ is a diffeomorphism. Put $W(q):=d\Phi_p\left(\hat{\nabla}_{\Phi^*X}\Phi^*Y\right)_{p}$. As before, since vertical vectors in $TM$ and in $TN$ are in bijection, we obtain that $F_*\hat{\nabla}^M$ is a transverse affine connection with partner connection given by $F_*\omega^M$.
\end{remark}

The next important consistency check arises by considering semi-Riemannian foliations. We show that any one such gives rise to a unique transverse affine structure --- a result that works as an analogue of the fundamental theorem of semi-Riemannian geometry. 
\begin{theorem}\label{kind-of-a-fundamental-theorem}
Let $g_\intercal$ be a transverse semi-Riemannian metric on $(M, \mathcal{F})$. There exists a unique transverse affine structure $[\overline{\nabla}]$ on $(M, \mathcal{F})$ such that, for any $\hat{\nabla}\in[\overline{\nabla}]$ \begin{enumerate}
        \item [(i)] the torsion tensor $\mathrm{Tor}(\Hat{\nabla})$ takes values in $\Gamma(T\mathcal{F})$, and
        \item [(ii)] $Xg_\intercal(Y,Z)=g_\intercal(\Hat{\nabla}_XY,Z)+g_\intercal(Y,\Hat{\nabla}_XZ), \forall X,Y,Z\in\mathfrak{X}(M)$.
\end{enumerate}Moreover, any representative of $[\overline{\nabla}]$ projects down to the basic Levi-Civita connection of $g_\intercal$ on $\nu\mathcal{F}$.
\end{theorem}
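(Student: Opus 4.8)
The plan is to pass to the normal bundle and exploit the correspondence of Proposition \ref{good-new-switcheroo}(ii) between transverse affine structures and holonomy-invariant connections on $\nu\f$. First I would observe that conditions $(i)$ and $(ii)$ are invariant under the equivalence relation of Definition \ref{tasdefi2}: if $\hat{\nabla}^1,\hat{\nabla}^2$ differ by a $(1,2)$-tensor $A$ valued in $\Gamma(T\f)$, then $\mathrm{Tor}(\hat{\nabla}^2)-\mathrm{Tor}(\hat{\nabla}^1)=A(\cdot,\cdot)-A(\cdot,\cdot)$ is vertical-valued, so $(i)$ is a property of the class; and since $g_\intercal$ annihilates vertical vectors, $g_\intercal(A(X,Y),Z)=0$, so the right-hand side of $(ii)$ is unchanged. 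Hence both conditions descend to the induced connection $D:=\overline{\hat{\nabla}}$ on $\nu\f$, and it suffices to find a unique holonomy-invariant $D$ whose associated structure satisfies $(i)$ and $(ii)$.

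Next I would translate $(i)$ and $(ii)$ into conditions on $D$. Using properties $(1)$ and $(2)$ of Definition \ref{tasdefi1} together with the torsion-freeness of the partner connection $\omega$, a direct check shows $\mathrm{Tor}(\hat{\nabla})(X,Y)$ is automatically vertical whenever $X$ or $Y$ is vertical; thus $(i)$ is equivalent to the single requirement $D_X\overline{Y}-D_Y\overline{X}=\overline{[X,Y]}$ for foliated $X,Y$. Likewise, since $\hat{\nabla}_X V=\omega_X V\in\Gamma(T\f)$ for vertical $V$, condition $(ii)$ holds automatically when one of $Y,Z$ is vertical, and for foliated $Y,Z$ it reduces (by tensoriality in all three slots and the fact that foliated coordinate fields locally frame $TM$) to metric-compatibility of $D$ with the nondegenerate metric $\overline{g}:=g_\intercal$ on $\nu\f$, namely $X\,\overline{g}(\overline{Y},\overline{Z})=\overline{g}(D_X\overline{Y},\overline{Z})+\overline{g}(\overline{Y},D_X\overline{Z})$.

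The heart of the argument is then a transverse Koszul formula: for foliated $X,Y,Z$ I would set
\[
2\,\overline{g}(D_X\overline{Y},\overline{Z})=X g_\intercal(Y,Z)+Y g_\intercal(X,Z)-Z g_\intercal(X,Y)+g_\intercal([X,Y],Z)-g_\intercal([X,Z],Y)-g_\intercal([Y,Z],X).
\]
The \emph{main obstacle} is well-definedness: the right-hand side must depend only on the classes $\overline{X},\overline{Y},\overline{Z}\in\mathfrak{l}(\f)$ and must be a basic function, so that nondegeneracy of $\overline{g}$ produces a genuine transverse field $D_X\overline{Y}\in\mathfrak{l}(\f)$. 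Both facts hinge on the holonomy-invariance of $g_\intercal$ together with the observation that $[V,Y]$ is vertical for vertical $V$ and foliated $Y$: replacing $X$ by $X+V$ changes no term (the derivative term because $g_\intercal(Y,Z)$ is basic, the bracket terms because $\overline{[V,Y]}=0$ and $g_\intercal(V,\cdot)=0$), and each summand is basic because the $g_\intercal$-pairings of foliated fields are basic and foliated fields preserve basic functions. I would then extend $D$ off the transverse fields by declaring it the Bott connection $\nabla^{\mathrm{Bott}}$ in vertical directions and imposing the Leibniz rule, checking that the result is a well-defined holonomy-invariant connection on $\nu\f$ in the sense of Definition \ref{transaffdefi}.

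Finally, for existence and the ``moreover'' clause: the standard Koszul computation shows the $D$ just built is simultaneously torsion-free and $\overline{g}$-compatible, while any $D'$ with these two properties satisfies the very same formula, forcing $D'=D$; via Proposition \ref{good-new-switcheroo}(ii) this yields a unique transverse affine structure $[\overline{\nabla}]$, whose representatives satisfy $(i)$ and $(ii)$ by the translation above. To identify $D$ with the basic Levi-Civita connection, I would choose any bundle-like metric $g$ inducing $g_\intercal$ on $\nu\f$ (obtained by completing $g_\intercal$ with a leafwise metric on a chosen complement), recall that its basic Levi-Civita connection $\nabla^\intercal$ is holonomy-invariant, torsion-free and metric with respect to $g_\intercal$, and conclude $D=\nabla^\intercal$ by the uniqueness just established; hence every representative of $[\overline{\nabla}]$ projects to $\nabla^\intercal$, as claimed.
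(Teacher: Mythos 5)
Your proposal is correct, but it reaches the result by a genuinely different route from the paper, at least for the existence half. The paper's uniqueness argument is essentially yours in compressed form: it observes that conditions $(i)$ and $(ii)$ are class-invariant and force the induced connection on $\nu\f$ to be torsion-free and compatible with the induced bundle metric, hence equal to the basic Levi-Civita connection. For existence, however, the paper does \emph{not} run a Koszul argument on $\nu\f$: it picks a bundle-like metric $g$ associated with $g_\intercal$, takes its Levi-Civita connection $\nabla$, and writes down the explicit representative
$\Hat{\nabla}_XY=(\nabla_XY^\top)^\top+[X^\top,Y^\perp]+(\nabla_{Y^\perp}X^\top)^\top+(\nabla_{X^\perp}Y^\perp)^\perp$,
then verifies by direct computation that this is a transverse affine connection satisfying $(i)$ and $(ii)$ (the compatibility check ultimately invoking transnormality of bundle-like metrics, i.e.\ closure of $T\f^\perp$ under the symmetric product). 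Your approach instead builds the holonomy-invariant connection $D$ on $\nu\f$ directly by the transverse Koszul formula, checks well-definedness and basicness of the right-hand side using holonomy-invariance of $g_\intercal$ and verticality of $[V,Y]$ for foliated $Y$, and then lifts through Proposition \ref{good-new-switcheroo}(ii); the reductions you perform (torsion automatically vertical when one slot is vertical, compatibility automatic when a slot is vertical, tensoriality of the defect) are all sound. What each buys: the paper's construction produces a concrete connection on $M$ itself together with an explicit partner connection, which is convenient later when geodesics of representatives are needed; your construction avoids choosing a bundle-like metric for existence, makes the uniqueness mechanism (Koszul) explicit rather than deferred to the known uniqueness of the basic Levi-Civita connection, and makes the ``moreover'' clause essentially automatic. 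One small point worth making explicit if you write this up: the induced connection of any transverse affine connection necessarily restricts to the Bott connection in vertical directions (since $\overline{\hat{\nabla}_V\mathcal{H}Y}=\overline{[V,\mathcal{H}Y]}=\overline{[V,Y]}$), so your declaration $D_V=\nabla^{\mathrm{Bott}}_V$ is forced rather than a choice, which is exactly what makes the uniqueness of the transverse affine structure (and not merely of $D$ on foliated arguments) go through.
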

\begin{proof}
We begin by noting that if $\Hat{\nabla}$ is a transverse affine connection satisfying conditions $(i)$ and $(ii)$, then it is straightforward to check that any $\overline{\nabla}\in[\Hat{\nabla}]$ will also satisfy both conditions. Also, if $\Tilde{\nabla}$ is the projection to $\Gamma(\nu\mathcal{F})$ of any connection on $M$ satisfying conditions $(i)$ and $(ii)$, then it is clear that $\Tilde{\nabla}$ is both torsion-free and compatible with the vector bundle metric $\Hat{g}$ induced by $g_\intercal$ on $\nu\mathcal{F}$. Therefore $\Tilde{\nabla}$ is precisely the basic Levi-Civita connection of $g_\intercal$. This establishes the uniqueness part of the theorem.

For the existence, choose any bundle-like metric $g$ associated with $g_\intercal$ with its corresponding Levi-Civita connection $\nabla$. For any $X\in \mathfrak{X}(M)$, we write $X=X^\perp+X^\top$ as usual, for its components of the decomposition $TM=T\mathcal{F}^\perp\oplus T\mathcal{F}$. Define $\Hat{\nabla}$ by
\begin{equation}
\label{biblically accurate angel}
\Hat{\nabla}_XY:=\left(\nabla_XY^\top\right)^\top+[X^\top, Y^\perp]+\left(\nabla_{Y^\perp}X^\top\right)^\top+\left(\nabla_{X^\perp}Y^\perp\right)^\perp.    
\end{equation}
It is clear that $\Hat{\nabla}$ is $ \mathbb{R}$-bilinear. We check it is in fact an affine connection. Pick $f\in C^\infty(M)$, and compute:
\begin{align*}
\Hat{\nabla}_{fX}Y&=\left(\nabla_{fX}Y^\top\right)^\top+[(fX)^\top, Y^\perp]+\left(\nabla_{Y^\perp}(fX)^\top\right)^\top+\left(\nabla_{(fX)^\perp}Y^\perp\right)^\perp\\
&=f\left(\nabla_{X}Y^\top\right)^\top-Y^\perp(f)X^\top+f[X^\top, Y^\perp]+\left(Y^\perp(f)X^\top+f\nabla_{Y^\perp}X^\top\right)^\top+f\left(\nabla_{X^\perp}Y^\perp\right)^\perp\\
&=f\left(\nabla_{X}Y^\top\right)^\top+f[X^\top, Y^\perp]+f\left(\nabla_{Y^\perp}X^\top\right)^\top+f\left(\nabla_{X^\perp}Y^\perp\right)^\perp\\
&=f\Hat{\nabla}_XY,
\end{align*}
and also
\begin{align*}
\Hat{\nabla}_XfY&=\left(\nabla_X(fY)^\top\right)^\top+[X^\top, (fY)^\perp]+\left(\nabla_{(fY)^\perp}X^\top\right)^\top+\left(\nabla_{X^\perp}(fY)^\perp\right)^\perp\\
&=\left((Xf)Y^\top+f\nabla_XY^\top\right)^\top+(X^\top f)Y^\perp+f[X^\top, Y^\perp]+f\left(\nabla_{Y^\perp}X^\top\right)^\top\\
 &\phantom{==} +\left((X^\perp f)Y^\perp+f\nabla_{X^\perp}Y^\perp\right)^\perp\\
&=(Xf)Y^\top+(X^\top f)Y^\perp+(X^\perp f)Y^\perp+f\Hat{\nabla}_XY\\
&=(Xf)Y^\top+(Xf)Y^\perp+f\Hat{\nabla}_XY\\
&=(Xf)Y+f\Hat{\nabla}_XY.
\end{align*}

Next, we verify that $\Hat{\nabla}$ is a transverse affine connection. To see this, choose $\omega_XV:=(\nabla_XV)^\top$ as the partner connection on $T\mathcal{F}$. Then, for any $V\in\Gamma(T\mathcal{F})$ and any $X\in\mathfrak{X}(M)$ we have
$$\Hat{\nabla}_XV=\left(\nabla_XV^\top\right)^\top=\left(\nabla_XV\right)^\top=\omega_XV$$
and 
\begin{align*}
\Hat{\nabla}_VX&=\left(\nabla_VX^\top\right)^\top+[V^\top, X^\perp]+\left(\nabla_{X^\perp}V^\top\right)^\top\\
&=\left(\nabla_VX^\top\right)^\top+[V, X^\perp]+\left(\nabla_{X^\perp}V\right)^\top\\
&=\omega_VX^\top+[V, X^\perp]+\omega_{X^\perp}V\\
&=\omega_{X^\top}V+[V,X^\top]+[V, X^\perp]+\omega_{X^\perp}V\\
&=\omega_{X}V+[V,X].
\end{align*}
Lastly, we argue that
$$\left(\mathcal{L}_V\Hat{\nabla}\right)(X,Y)=[V,\Hat{\nabla}_XY]-\Hat{\nabla}_{[V,X]}Y-\Hat{\nabla}_X[V,Y].$$
Indeed, since it is always true that the Lie derivative of a connection is a tensor, we can check this condition locally, and in particular we can assume that $X, Y$ are projectable vector fields. A simple computation shows that under this assumption the two last terms above will be in $T\mathcal{F}$. Hence, we need only with the first one:
\begin{align*}
[V,\Hat{\nabla}_XY]&=[V, \left(\nabla_XY^\top\right)^\top+[X^\top, Y^\perp]+\left(\nabla_{Y^\perp}X^\top\right)^\top+\left(\nabla_{X^\perp}Y^\perp\right)^\perp].
\end{align*}The bracket of $V$ with the first three terms is clearly in $T\mathcal{F}$, so we are only left to show that $\left(\nabla_{X^\perp}Y^\perp\right)^\perp$ is projectable. But this follows readily from the properties of semi-Riemannian submersions, which we can employ here locally, since $g$ is a bundle-like metric for $(M, \mathcal{F})$.

The final step in the proof is to check the validity of properties $(i)$ and $(ii)$. For $(i)$, we compute
\begin{align*}
\mathrm{Tor}(\Hat{\nabla})(X,Y)&:=\Hat{\nabla}_XY-\Hat{\nabla}_YX-[X,Y]\\
&=\left(\nabla_XY^\top\right)^\top-\left(\nabla_YX^\top\right)^\top+[X^\top,Y^\perp]-[Y^\top,X^\perp]\\
&\phantom{==} +\left(\nabla_{Y^\perp}X^\top\right)^\top-\left(\nabla_{X^\perp}Y^\top\right)^\top+\left(\nabla_{X^\perp}Y^\perp\right)^\perp-\left(\nabla_{X^\perp}Y^\perp\right)^\perp-[X,Y]\\
&=\left(\nabla_{X^\top}Y^\top-\nabla_{Y^\top}X^\top\right)^\top+[X^\top,Y^\perp]-[Y^\top,X^\perp]+[X^\perp, Y^\perp]^\perp-[X,Y]\\
&=[X^\top, Y^\top]^\top+[X^\top,Y^\perp]+[X^\perp,Y^\top]+[X^\perp, Y^\perp]^\perp-[X,Y]\\
&=-[X^\perp, Y^\perp]^\top.
\end{align*}
For $(ii)$ we have
\begin{align*}
g_\intercal(\Hat{\nabla}_XY,Z)+g_\intercal(Y,\Hat{\nabla}_XZ)&=g((\Hat{\nabla}_XY)^\perp,Z^\perp)+g(Y^\perp,(\Hat{\nabla}_XZ)^\perp)\\
&=g([X^\top,Y^\perp]^\perp+(\nabla_{X^\perp}Y^\perp)^\perp, Z^\perp)+g(Y^\perp, [X^\top, Z^\perp]^\perp+(\nabla_{X^\perp}Z^\perp)^\perp)\\
&=g(\nabla_{X^\perp}Y^\perp, Z^\perp)+g(Y^\perp,\nabla_{X^\perp}Z^\perp)\\
&\phantom{==}+g(\nabla_{X^\top}Y^\perp-\nabla_{Y^\perp}X^\top, Z^\perp)+g(Y^\perp, \nabla_{X^\top}Z^\perp-\nabla_{Z^\perp}X^\top)\\
&=X^\perp g(Y^\perp, Z^\perp)+X^\top g(Y^\perp, Z^\perp)-Y^\perp\cancel{g(X^\top, Z^\perp)}-Z^\perp\cancel{g(X^\top, Z^\perp)}\\
&\phantom{==}+g(X^\top, \nabla_{Y^\perp}Z^\perp+\nabla_{Z^\perp}Y^\perp)\\
&=Xg(Y^\perp, Z^\perp)+\cancel{g(X^\top, \nabla_{Y^\perp}Z^\perp+\nabla_{Z^\perp}Y^\perp)}\\
&=Xg_\intercal(Y,Z),
\end{align*}
where the cancellation of the term in the penultimate line is due to the fact that $\nabla$ is the Levi-Civita connection of a bundle-like metric, and therefore $\nabla$-geodesics $g$-orthogonal to $T\mathcal{F}$ at one point remain $g$-orthogonal everywhere, by transnormality. But by the discussion in Ref. \cite{lewis}, this is tantamount to the distribution $T\mathcal{F}^\perp$ being closed with respect to the \textit{symmetric product} $\langle E:F\rangle:=\nabla_EF+\nabla_FE$.
\end{proof}

\begin{remark}\label{transverse-curvature-and-torsion}
As any connection, a transverse affine connection $\Hat{\nabla}$ gives rise to a curvature tensor and a torsion tensor, which we denote respectively by $\Hat{R}$ and $\mathrm{Tor}\Hat{\nabla}$. If $X, Y, Z\in\mathfrak{X}(M)$ and $V, W\in\Gamma(T\mathcal{F})$ we have that:

\begin{align*}
-(\mathrm{Tor}\Hat{\nabla})(V,X)&=(\mathrm{Tor}\Hat{\nabla})(X,V)=\Hat{\nabla}_XV-\Hat{\nabla}_VX-[X,V]\\
&=\omega_XV-[V,X]-\omega_XV-[X,V]=0,
\end{align*}

\begin{align}\label{transverse-curvature-one-vertical-vector}
\begin{split}
\Hat{R}(V,Y)Z&=\Hat{\nabla}_V\Hat{\nabla}_YZ-\Hat{\nabla}_Y\Hat{\nabla}_VZ-\Hat{\nabla}_{[V,Y]}Z\\
&=[V,\Hat{\nabla}_YZ]+\omega_{\Hat{\nabla}_YZ}V-\Hat{\nabla}_Y\left([V,Z]+\omega_ZV\right)-\Hat{\nabla}_{[V,Y]}Z\\
&=\left(\mathcal{L}_V\Hat{\nabla}\right)(Y,Z)+\omega_{\Hat{\nabla}_YZ}V-\omega_Y\omega_ZV
\end{split}
\end{align}
and
\begin{align*}
\Hat{R}(V,W)Z&=\Hat{\nabla}_V\Hat{\nabla}_WZ-\Hat{\nabla}_W\Hat{\nabla}_VZ-\Hat{\nabla}_{[V,W]}Z\\
&=\Hat{\nabla}_V\left([W,Z]+\omega_ZW\right)-\Hat{\nabla}_W\left([V,Z]+\omega_ZV\right)-[[V,W],Z]-\omega_Z[V,W]\\
&=[V,[W,Z]]+\omega_{[W,Z]}V+\omega_V\omega_ZW-[W,[V,Z]]-\omega_{[V,Z]}W\\
&\phantom{==}-\omega_W\omega_ZV-[[V,W],Z]-\omega_Z[V,W]\\
&=\omega_{[W,Z]}V+\omega_V\omega_ZW-\omega_{[V,Z]}W-\omega_W\omega_ZV-\omega_Z[V,W]\\
&=\omega_{[W,Z]}V+\omega_V\omega_ZW-\omega_{[V,Z]}W-\omega_W\omega_ZV-\omega_Z\left(\omega_VW-\omega_WV\right)\\
&=\left(\omega_{[W,Z]}-\omega_W\omega_Z+\omega_Z\omega_W\right)V-\left(\omega_{[V,Z]}-\omega_V\omega_Z+\omega_Z\omega_V\right)W\\
&=R^\omega(V,Z)W-R^\omega(W,Z)V.
\end{align*}

These formulae will soon be useful, when we deal with the transverse analogues of geodesics and Jacobi fields.
\end{remark}

\section{Geodesics, transverse-geodesics and transverse Jacobi fields}\label{section: geodesics}

As mentioned above, the main motivation behind the definitions of bundle-like and transverse affine structure as given in the previous section is they can be adapted to the study of (transverse analogues of) \textit{geodesics}. In order to motivate our definition, and since regular foliations are locally defined by submersions, we start by reviewing here what is already known about geodesics for the specific case of submersions with a transverse affine geometry. We then show how these insights can be generalized to transversely affine foliations.

Blumenthal \cite{blumenthal} studies a submersion $F:M\rightarrow N$ between affine manifolds $(M, \nabla^M)$ and $(N, \nabla^N)$, and calls $F$ an \textit{affine submersion} if the derivative map $F_*:TM\rightarrow TN$ commutes with the parallel transports of $\nabla^M$ and $\nabla^N$. However, Abe and Hasegawa \cite{abe-hasegawa} show that Blumenthal's concept is too restrictive to generalize the well-known notion of (semi-)Riemannian submersion as studied by O'Neill in \cite{oneil}. Indeed, with Blumenthal's definition, \textit{every} geodesic on $(M, \nabla^M)$ is mapped onto a geodesic on $(N, \nabla^N)$, not only those tangent to a distinguished horizontal distribution (in the Riemannian case, the horizontal distribution is of course the distribution normal to the leaves). Abe and Hasegawa introduce instead the notion of an \textit{affine submersion with a horizontal distribution} as a submersion between affine manifolds $(M, \nabla^M)$ and $(N, \nabla^N)$, together with a fixed horizontal distribution $\mathcal{H}\subset TM$ such that
$$\mathcal{H}\left(\nabla^M_{\widetilde{X}}\widetilde{Y}\right)=\widetilde{\left(\nabla^N_XY\right)},$$
for all $X, Y\in\mathfrak{X}(N)$, where the tilde denotes the unique lift of the corresponding vector field which is tangent to $\mathcal{H}$. (If there is no risk of confusion, we shall often use the symbols $\mathcal{H}$ and $\mathcal{V}$ both for the horizontal and vertical distributions themselves, as well as for the corresponding projections of vectors onto them.) 

The existence of such a distribution $\mathcal{H}$ allows one to reproduce O'Neill's well-known theory of (semi-)Riemannian submersions in \cite{oneil}. Indeed, we can define the analogues of the O'Neill tensors $A, T\in\mathcal{I}^1_2(M)$:
\begin{align}\begin{split}\label{generalized-oneill}
T_EF&=\mathcal{H}\left(\nabla^M_{\mathcal{V}E}\mathcal{V}F\right)+\mathcal{V}\left(\nabla^M_{\mathcal{V}E}\mathcal{H}F\right),\\
A_EF&=\mathcal{H}\left(\nabla^M_{\mathcal{H}E}\mathcal{V}F\right)+\mathcal{V}\left(\nabla^M_{\mathcal{H}E}\mathcal{H}F\right).
\end{split}\end{align}

Just in the semi-Riemannian case, it is readily seen that $T_E$ and $A_E$ interchange $\mathcal{H}$ and $\mathcal{V}$ and, moreover, that $T_E=T_{\mathcal{V}E}$ and $A_E=A_{\mathcal{H}E}$. However, since $\nabla^M$ is not necessarily torsion-free, we cannot conclude that $T_UV=T_VU$ for vertical fields $U$ and $V$. Furthermore, since there is in principle no underlying metric with which $\nabla^M$ is compatible, it need not be true that $A_XY=-A_YX$ for horizontal fields $X$ and $Y$, nor does it make sense to discuss the skew-symmetry of $A_E$ and $T_E$. This boils down to the fact that the equations for the horizontal and vertical parts of the acceleration of a curve on $M$ retain terms which vanish on the corresponding equations in O'Neill's work.  However, \cite[Corollary 4.2]{abe-hasegawa} it is still true that the projections of \textit{horizontal} geodesics are always geodesics. Moreover, the tensor $A$ governs how much the submersion deviates from the behavior of a semi-Riemannian submersion:

\begin{proposition}[{\cite[Corollary 4.3]{abe-hasegawa}}]
Let $\pi:(M, \nabla^M)\rightarrow(N,\nabla^N)$ be an affine submersion with horizontal distribution $\mathcal{H}$ such that $A_ZZ=0$ for all horizontal vectors $Z$. Then, every horizontal lift of a geodesic of $N$ is a geodesic of $M$.
\end{proposition}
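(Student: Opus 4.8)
The plan is to compute the $\nabla^M$-acceleration of a horizontal lift and to show that its horizontal and vertical components vanish separately. Fix a $\nabla^N$-geodesic $\gamma:I\to N$, so $\nabla^N_{\gamma'}\gamma'=0$, and let $\tilde\gamma:I\to M$ be a horizontal lift, so that $\pi\circ\tilde\gamma=\gamma$ and $\tilde\gamma'(t)\in\mathcal{H}_{\tilde\gamma(t)}$ for every $t$. The goal is to prove $\nabla^M_{\tilde\gamma'}\tilde\gamma'=0$, and the decomposition $TM=\mathcal{H}\oplus\mathcal{V}$ reduces this to killing $\mathcal{H}(\nabla^M_{\tilde\gamma'}\tilde\gamma')$ and $\mathcal{V}(\nabla^M_{\tilde\gamma'}\tilde\gamma')$ one at a time.

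First I would set up the bookkeeping that lets me pass from the velocity field along the curve to honest vector fields, on which both the defining identity of an affine submersion with horizontal distribution and the tensor $A$ in \eqref{generalized-oneill} are defined. If $\gamma$ is constant the claim is trivial, so I may assume $\gamma'(t)\neq 0$ for all $t$ (a geodesic with vanishing velocity is constant); then $\gamma$ is locally an embedding, so near any point I can extend $\gamma'$ to a field $X\in\mathfrak{X}(N)$ and form its horizontal lift $\widetilde{X}$. Since $\tilde\gamma'(t)$ is horizontal and projects to $\gamma'(t)$, uniqueness of horizontal lifts of vectors gives $\tilde\gamma'(t)=\widetilde{X}_{\tilde\gamma(t)}$, and hence, because $\nabla^M_v\widetilde{X}$ depends only on the vector $v$ at its foot point, the covariant acceleration satisfies $\nabla^M_{\tilde\gamma'}\tilde\gamma'=(\nabla^M_{\widetilde{X}}\widetilde{X})\big|_{\tilde\gamma}$.

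For the horizontal component I would invoke the defining property $\mathcal{H}(\nabla^M_{\widetilde{X}}\widetilde{Y})=\widetilde{\nabla^N_XY}$ with $Y=X$, which gives
$$\mathcal{H}\big(\nabla^M_{\tilde\gamma'}\tilde\gamma'\big)=\widetilde{\nabla^N_XX}\big|_{\tilde\gamma}=\widetilde{\nabla^N_{\gamma'}\gamma'}\big|_{\tilde\gamma}=0,$$
the last equality because $\gamma$ is a $\nabla^N$-geodesic. For the vertical component I would read off from \eqref{generalized-oneill} that, since $\tilde\gamma'$ is horizontal (so $\mathcal{V}\tilde\gamma'=0$ and $\mathcal{H}\tilde\gamma'=\tilde\gamma'$),
$$A_{\tilde\gamma'}\tilde\gamma'=\mathcal{H}\big(\nabla^M_{\tilde\gamma'}\mathcal{V}\tilde\gamma'\big)+\mathcal{V}\big(\nabla^M_{\tilde\gamma'}\mathcal{H}\tilde\gamma'\big)=\mathcal{V}\big(\nabla^M_{\tilde\gamma'}\tilde\gamma'\big).$$
Because $A$ is a tensor it may be evaluated pointwise at $\tilde\gamma'(t)$, so the hypothesis $A_ZZ=0$ for all horizontal $Z$ forces $\mathcal{V}(\nabla^M_{\tilde\gamma'}\tilde\gamma')=A_{\tilde\gamma'}\tilde\gamma'=0$. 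Adding the two vanishing components yields $\nabla^M_{\tilde\gamma'}\tilde\gamma'=0$, so $\tilde\gamma$ is a $\nabla^M$-geodesic.

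I expect the only genuinely delicate point to be the reduction in the second paragraph: the identity defining the affine submersion and the tensor $A$ are phrased for vector fields on $M$ and $N$, whereas the acceleration is a covariant derivative along a curve. One must therefore justify replacing $\tilde\gamma'$ by the horizontal lift of a local extension of $\gamma'$ and check independence of that choice — tensoriality of $A$ disposes of the vertical term and the submersion identity of the horizontal term. Everything else is a direct reading of the definitions.
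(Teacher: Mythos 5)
The paper offers no proof of this proposition; it is quoted directly from Abe--Hasegawa \cite{abe-hasegawa}, so there is nothing in the text to compare against. Your argument is correct and is the standard one: the reduction to a local extension $\widetilde{X}$ of $\tilde\gamma'$ is legitimate (tensoriality of $A$ handles the vertical part, and the defining identity $\mathcal{H}(\nabla^M_{\widetilde{X}}\widetilde{Y})=\widetilde{\nabla^N_XY}$ handles the horizontal part), and both components of $\nabla^M_{\tilde\gamma'}\tilde\gamma'$ vanish as you claim.
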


\begin{proposition}[{\cite[Proposition 4.4]{abe-hasegawa}}]
    Every geodesic of $M$ which has a horizontal tangent vector is always horizontal if and only if $A_ZZ=0$ for any horizontal vector $Z$.
\end{proposition}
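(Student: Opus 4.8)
The plan is to recognize the condition $A_ZZ=0$ as a geodesic-invariance property of the horizontal distribution $\mathcal{H}$, and then prove the two implications separately. The first step is to unwind the definition \eqref{generalized-oneill}: for a horizontal field $Z$ one has $\mathcal{V}Z=0$, so $A_ZZ=\mathcal{V}(\nabla^M_Z Z)$. Hence the hypothesis ``$A_ZZ=0$ for every horizontal $Z$'' is exactly the assertion that $\nabla^M_Z Z$ remains horizontal for every horizontal $Z$, and the proposition reduces to showing that $\mathcal{H}$ is geodesically invariant if and only if $\nabla^M_Z Z\in\Gamma(\mathcal{H})$ for all $Z\in\Gamma(\mathcal{H})$.

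For the direction $(\Rightarrow)$, I would suppose every geodesic with a horizontal tangent vector is horizontal and fix $Z_p\in\mathcal{H}_p$. If $Z_p=0$ there is nothing to prove; otherwise let $\gamma$ be the geodesic with $\gamma'(0)=Z_p$, which by hypothesis is horizontal, and extend $\gamma'$ to a horizontal vector field $Z$ on a neighborhood of $p$ with $Z\circ\gamma=\gamma'$ (possible since $\gamma$ is an embedding for small $t$, and $\gamma'$ is already horizontal). Because $A$ is a tensor, $A_{Z_p}Z_p=\mathcal{V}(\nabla^M_Z Z)_p=\mathcal{V}(\nabla^M_{\gamma'}\gamma')(0)=\mathcal{V}(0)=0$. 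As $Z_p$ was arbitrary, $A_ZZ=0$ for all horizontal $Z$.

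The substantive direction $(\Leftarrow)$ is handled by an ODE argument showing the geodesic field on $TM$ is tangent to $\mathcal{H}$. First I would polarize: bilinearity of $A$ together with $A_YY=0$ for all horizontal $Y$ yields $\mathcal{V}(\nabla^M_Z W+\nabla^M_W Z)=0$ for all horizontal $Z,W$, using that $A_ZW=\mathcal{V}(\nabla^M_Z W)$ when both are horizontal. Then, near a point of a geodesic $\gamma$, I would choose adapted local frames $\{X_a\}$ of $\mathcal{H}$ and $\{V_i\}$ of $T\mathcal{F}$, write $\gamma'=h^aX_a+v^iV_i$, and expand the vertical part of $\nabla^M_{\gamma'}\gamma'=0$. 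The term quadratic in the horizontal coefficients is $h^ah^b\,\mathcal{V}(\nabla^M_{X_b}X_a)$, which is symmetric in $a,b$ and hence equals $\tfrac{1}{2}h^ah^b\,\mathcal{V}(\nabla^M_{X_a}X_b+\nabla^M_{X_b}X_a)=0$ by the polarized identity. Every surviving term then carries at least one factor of the vertical coefficients $v^i$, so the vertical equation takes the form $\dot v^i=F^i(\gamma,h,v)$ with $F^i(\gamma,h,0)=0$. Consequently the geodesic vector field is tangent to the submanifold $\{v=0\}$ of horizontal velocities in $TM$, so a geodesic with $v(0)=0$ stays in $\{v=0\}$; connectedness of the domain promotes this to the whole interval.

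I expect the main obstacle to be the bookkeeping in this last step: verifying carefully that, once the quadratic horizontal term is eliminated, every remaining contribution --- from $\nabla^M_{X_b}V_i$, $\nabla^M_{V_j}X_a$ and $\nabla^M_{V_j}V_i$ --- is genuinely proportional to some $v^i$, so that $F^i(\gamma,h,0)=0$ holds identically and the invariance of $\{v=0\}$ follows. The remaining subtleties are the standard ones of passing between tensors, local frames and fields along a curve (well-definedness of the extension in $(\Rightarrow)$, frame-independence of the components in $(\Leftarrow)$); none of the computations obstruct, but they must be organized so that the vanishing of the $h^ah^b$-term is seen to be precisely the content of $A_ZZ=0$.
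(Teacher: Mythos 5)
Your proof is correct. Note, however, that the paper itself offers no proof of this statement: it is imported verbatim as Proposition 4.4 of Abe--Hasegawa, so there is nothing internal to compare against. Your argument is the standard one, and it is essentially a self-contained specialization of a result the paper does invoke elsewhere, namely Lewis's criterion (\cite{lewis}, cited in the proof of the fundamental-theorem analogue) that a distribution $\mathcal{D}$ is geodesically invariant if and only if $\nabla_XX\in\Gamma(\mathcal{D})$ for all $X\in\Gamma(\mathcal{D})$, equivalently if and only if $\mathcal{D}$ is closed under the symmetric product $\langle X:Y\rangle=\nabla_XY+\nabla_YX$. Your reduction $A_ZZ=\mathcal{V}(\nabla^M_ZZ)$ for horizontal $Z$ correctly identifies the hypothesis with $\nabla^M_ZZ\in\Gamma(\mathcal{H})$; the $(\Rightarrow)$ direction via tensoriality of $A$ and a horizontal extension of $\gamma'$ is fine; and the $(\Leftarrow)$ direction via polarization plus the observation that the vertical component of the geodesic equation becomes an ODE $\dot v^i=F^i(t,v)$ with $F^i(t,0)=0$ (once the $h^ah^b$-term is killed) is exactly the right mechanism, with the open-and-closed argument on $\{t:\gamma'(t)\in\mathcal{H}_{\gamma(t)}\}$ correctly handling the passage from local frames to the whole interval. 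The one point worth stating explicitly is that in the uniqueness step you treat the horizontal components $h^a(t)$ as known data of the already-existing geodesic rather than as unknowns of a coupled system; as written this is implicit but sound.
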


Our notion of bundle-like affine structure is the direct generalization of Abe-Hasagawa's notion, and essentially reduces to the latter when discussing globally defined submersions. Now, since the formulae \eqref{generalized-oneill} make use only of the connection on $M$, they can be reproduced \textit{ipsis literis} in the setting of a foliated manifold endowed with a bundle-like affine structure $(\mathcal{H}, \nabla)$, and we shall henceforth always assume this to be the case. We proceed to see how it can be used to obtain generalizations of their results to more general foliated manifolds. 

\begin{remark}\label{reformation-tensor}
Since geodesics and their transverse generalization --- to be defined later on --- will play an important role in the theory developed here, it is important to ensure that they behave just as in the context of semi-Riemannian submersions, where horizontal geodesics project to, and are lifts of, geodesics in the base. As we have previously pointed out, this is governed by the vanishing of the tensor $A$. Therefore, given a distribution $\mathcal{H}$ and a transverse affine connection $\hat{\nabla}$, we introduce the \textit{recalibration tensor (determined by $\hat{\nabla}$ and $\mathcal{H}$)} $\Omega:\mathfrak{X}(M)\times\mathfrak{X}(M)\rightarrow\Gamma(T\mathcal{F})$ given by $$
\Omega(X,Y):=-\frac{1}{2}\mathcal{V}\left(\Hat{\nabla}_{\mathcal{H}X}\mathcal{H}Y+\Hat{\nabla}_{\mathcal{H}Y}\mathcal{H}X\right).$$

Given that $\Omega$ is clearly tensorial, $\overline{\nabla}:=\Hat{\nabla}+\Omega$ defines a new affine connection, and since $\Omega$ is also symmetric, it is easy to see that $\overline{\nabla}$ is again a \textit{transverse} affine connection. In fact, since $\overline{\nabla}$ and $\Hat{\nabla}$ differ by a tensor taking values in the vertical bundle, it follows that $[\overline{\nabla}]=[\Hat{\nabla}]$. Note that if $\overline{A}$ is the $A$-tensor arising from $\mathcal{H}$ and $\overline{\nabla}$, given $H\in\Gamma(\mathcal{H})$ we have that
\begin{align*}    \overline{A}_HH&=\mathcal{V}\left(\overline{\nabla}_HH\right)=\mathcal{V}\left(\Hat{\nabla}_HH+\Omega(H,H)\right)\\&=\mathcal{V}\left(\Hat{\nabla}_HH\right)-\mathcal{V}\left(\Hat{\nabla}_HH\right)\\&=0.
\end{align*}
We thus conclude that given any distribution $\mathcal{H}$ such that $TM=\mathcal{H}\oplus T\mathcal{F}$, there is some $\overline{\nabla}\in[\Hat{\nabla}]$ such that, not only $(\mathcal{H}, \overline{\nabla})$ is a bundle-like affine structure, but it also satisfies the analogue of the so called \textit{transnormality property} of semi-Riemannian foliations: if $\gamma:(a,b)\rightarrow M$ is a $\overline{\nabla}$-geodesic such that for some $t_0\in (a,b)$ the vector $\gamma^\prime(t_0)$ is in $\mathcal{H}_{\gamma(t_0)}$, then $\gamma^\prime(t)\in\mathcal{H}_{\gamma(t)}, \forall t\in(a,b)$.
\end{remark}

\begin{example}
We illustrate the previous consideration via a simple example. On $\mathbb{R}^3$ we fix Cartesian global coordinates $t, x, y$ and $\mathcal{F}$ is the one-dimensional foliation such that $T\mathcal{F}$ is spanned by $\frac{\partial}{\partial y}$. A bundle-like affine structure for $(\mathbb{R}^3,\mathcal{F})$ is given, for example, by $$\left(\mathcal{H}_0=\mathrm{span}\left(\frac{\partial}{\partial t}, \frac{\partial}{\partial x}\right), \nabla^{\mathrm{flat}}\right),$$ where $\nabla^{\mathrm{flat}}$ is the standard flat connection. It is trivial to see that $\nabla^{\mathrm{flat}}$ is a transverse affine connection, and hence it defines a transverse affine structure $[\nabla^{\mathrm{flat}}]$. Conversely, we obtain all other bundle-like affine structures associated with $[\nabla^{\mathrm{flat}}]$ as follows. Note that two vector fields $X=X^t\partial_t+X^x\partial_x+X^y\partial_y$ and $Y=Y^t\partial_t+Y^x\partial_x+Y^y\partial_y$ span a distribution $\mathcal{H}$ complementary to $T\mathcal{F}$ if and only if we have $X^tY^x-X^xY^t\neq 0$ at every point. Suppose this happens, and pick any vector field $Z=Z^t\partial_t+Z^x\partial_x+Z^y\partial_y$. Then we can also write $Z$ as $$
Z=\frac{Z^xY^t-Z^tY^x}{Y^tX^x-Y^xX^t}X+\frac{Z^tX^x-Z^xX^t}{Y^tX^x-Y^xX^t}Y+\left(Z^y-\frac{Z^x(Y^tX^y-X^tY^y)+Z^t(X^xY^y-Y^xX^y)}{Y^tX^x-Y^xX^t}\right)\frac{\partial}{\partial y},
$$so that $$
\mathcal{H}Z=Z^t\frac{\partial}{\partial t}+Z^x\frac{\partial}{\partial x}+\underbrace{\frac{Z^x(Y^tX^y-X^tY^y)+Z^t(X^xY^y-Y^xX^y)}{Y^tX^x-Y^xX^t}}_{=:\tilde{Z}^y}\frac{\partial}{\partial y}.
$$
If $W=W^t\partial_t+W^x\partial_x+W^y\partial_y$, then $$
\nabla^{\mathrm{flat}}_{\mathcal{H}Z}\mathcal{H}W=\left(Z^t\frac{\partial}{\partial t}+Z^x\frac{\partial}{\partial x}+\tilde{Z}^y\frac{\partial}{\partial y}\right)\left(W^t\frac{\partial}{\partial t}+W^x\frac{\partial}{\partial x}+\tilde{W}^y\frac{\partial}{\partial y}\right).
$$
Let $U=U^t\partial_t+U^x\partial_x+U^y\partial_y$ be equal to $\nabla^{\mathrm{flat}}_{\mathcal{H}Z}\mathcal{H}W+\nabla^{\mathrm{flat}}_{\mathcal{H}W}\mathcal{H}Z$, so that \begin{align*}
U^t&=Z^t\frac{\partial W^t}{\partial t}+Z^x\frac{\partial W^t}{\partial x}+\tilde{Z}^y\frac{\partial W^t}{\partial y}+W^t\frac{\partial Z^t}{\partial t}+W^x\frac{\partial Z^t}{\partial x}+\tilde{W}^y\frac{\partial Z^t}{\partial y},\\
U^x&=Z^t\frac{\partial W^x}{\partial t}+Z^x\frac{\partial W^x}{\partial x}+\tilde{Z}^y\frac{\partial W^x}{\partial y}+W^t\frac{\partial Z^x}{\partial t}+W^x\frac{\partial Z^x}{\partial x}+\tilde{W}^y\frac{\partial Z^x}{\partial y},\\
U^y&=Z^t\frac{\partial \tilde{W}^y}{\partial t}+Z^x\frac{\partial \tilde{W}^y}{\partial x}+\tilde{Z}^y\frac{\partial \tilde{W}^y}{\partial y}+W^t\frac{\partial \tilde{Z}^y}{\partial t}+W^x\frac{\partial \tilde{Z}^y}{\partial x}+\tilde{W}^y\frac{\partial \tilde{Z}^y}{\partial y}.
\end{align*}
We thus define $\nabla^\mathcal{H}:=\nabla^{\mathrm{flat}}+\Omega$, where $$
\Omega(Z,W)=-\frac{1}{2}\mathcal{V}U=-\frac{1}{2}\left(U^y-\frac{U^x(Y^tX^y-X^tY^y)+U^t(X^xY^y-Y^xX^y)}{Y^tX^x-Y^xX^t}\right)\frac{\partial}{\partial y}.
$$ For a concrete example, let $X=\partial_t+\cos(t)\partial_y$ and $Y=\partial_x$. Then, we have that \begin{align*}
Y^tX^y-X^tY^y&=0,\\ X^xY^y-Y^xX^y&=-\cos(t),\\ Y^tX^x-Y^xX^t&=-1,
\end{align*}which yields $\tilde{Z}^y=\cos(t)Z^t$. Therefore,
\begin{align*}
    \Omega(Z,W)&=-\frac{1}{2}\left(U^y-\cos(t)U^t\right)\frac{\partial}{\partial y}\\
    &=-\frac{1}{2}\left[Z^t\left(-\sin(t)W^t+\cos(t)\frac{\partial W^t}{\partial t}\right)+\cos(t)Z^x\frac{\partial W^t}{\partial x}+\cos^2(t)Z^t\frac{\partial W^t}{\partial y}\right.\\
&\phantom{==}+W^t\left(-\sin(t)Z^t+\cos(t)\frac{\partial Z^t}{\partial t}\right)+\cos(t)W^x\frac{\partial Z^t}{\partial x}+\cos^2(t)W^t\frac{\partial Z^t}{\partial y}\\
&\phantom{==}-\left.\cos(t)\left(Z^t\frac{\partial W^t}{\partial t}+Z^x\frac{\partial W^t}{\partial x}+\cos(t)Z^t\frac{\partial W^t}{\partial y}+W^t\frac{\partial Z^t}{\partial t}+W^x\frac{\partial Z^t}{\partial x}+\cos(t)W^t\frac{\partial Z^t}{\partial y}\right)\right]\frac{\partial}{\partial y}\\
&=-\frac{1}{2}\left[-2\sin(t)Z^tW^t\right]\frac{\partial}{\partial y}\\
&=\sin(t)Z^tW^t\frac{\partial}{\partial y}.
\end{align*}And thus we have that $$
\nabla^{\mathcal{H}}_{Z}W=\left(Z^t\frac{\partial}{\partial t}+Z^x\frac{\partial}{\partial x}+Z^y\frac{\partial}{\partial y}\right)\left(W^t\frac{\partial}{\partial t}+W^x\frac{\partial}{\partial x}+W^y\frac{\partial}{\partial y}\right)+\sin(t)Z^tW^t\frac{\partial}{\partial y}.
$$
Note that, as expected, we have \begin{align*}
\nabla^\mathcal{H}_XX&=\left(\frac{\partial}{\partial t}+\cos(t)\frac{\partial}{\partial y}\right)\left(\frac{\partial}{\partial t}+\cos(t)\frac{\partial}{\partial y}\right)+\sin(t)\frac{\partial}{\partial y}\\
&=-\sin(t)\frac{\partial}{\partial y}+\sin(t)\frac{\partial}{\partial y}\\
&=0,
\end{align*}which means, in particular, that a curve such as $\gamma(s)=(s+a,b,\sin(s)+c)$ is a $\nabla^\mathcal{H}$-geodesic.
\end{example}

\subsection{Transverse-geodesics}
Let $(M,\f)$ be a foliated manifold. Given a transverse affine connection $\hat{\nabla}$ on $(M,\f)$, a $\Hat{\nabla}$-geodesic is defined in the usual fashion. By Remark \ref{transverse-curvature-and-torsion} the torsion of $\Hat{\nabla}$ vanishes when evaluated on vertical vector fields. This implies that if $\omega:\mathfrak{X}(M)\times\Gamma(T\mathcal{F})\rightarrow\Gamma(T\mathcal{F})$ is a partner connection for $\Hat{\nabla}$, it is also a partner connection for its \textit{symmetric part} $\Hat{\nabla}^{\text{Sym}}:=\Hat{\nabla}-\frac{1}{2}(\mathrm{Tor}\Hat{\nabla})$. To check that the Lie derivative of $\Hat{\nabla}^{\text{Sym}}$ along vertical directions is indeed a tensor taking values in the vertical bundle, it suffices to see that $[V,(\mathrm{Tor}\Hat{\nabla})(X,Y)]$ is vertical for $V\in\Gamma(T\mathcal{F})$. Again, by tensoriality, we can assume that $X, Y$ are projectable, and hence the desired relation readily follows. In conclusion, when dealing with geodesics we may always assume $\Hat{\nabla}$ is torsion-free. However, since the object of geometric significance for us is not $\hat{\nabla}$ \textit{per se}, but the transverse affine structure $[\hat{\nabla}]$ defined by it, we must introduce a new type of curve. 

\begin{definition}
Let $\hat{\nabla}$ be a transverse affine connection on $(M, \mathcal{F})$ and $\gamma:I\rightarrow M$ a smooth curve. We say that $\gamma$ is a \textit{$\hat{\nabla}$-transverse-geodesic} if $\left(\hat{\nabla}_{\gamma^\prime}\gamma^\prime\right)(t)$ is a vertical vector for all $t\in I$.
\end{definition}

\begin{remark}
Ambiguity in terminology notwithstanding, we emphasize that transverse-geodesics should \textit{not} be automatically regarded as ``geodesics which are transverse'', but rather as an atomic term with independent meaning, at least for the time being. In fact, it is easy to see that given the properties of a transverse affine connection, any curve which is contained within some leaf satisfies the definition of a transverse-geodesic. In the case of submersions, any such curve represents a constant geodesics in the base manifold, and as such will be systematically disregarded in most results later on.\end{remark}

\begin{proposition}\label{everything-is-equivalent}
Let $[\hat{\nabla}]$ be a transverse affine structure on $(M, \mathcal{F})$ and let $\gamma:I\rightarrow M$ be a smooth curve. The following statements are equivalent.
\begin{enumerate}
    \item There exists a transverse affine connection $\nabla^\prime\in[\hat{\nabla}]$ such that $\gamma$ is a $\nabla^\prime$-transverse-geodesic;
    \item $\gamma$ is a $\nabla$-transverse-geodesic for all $\nabla\in[\hat{\nabla}]$;
\end{enumerate}If moreover, $\gamma$ admits a distribution $\mathcal{H}$ such that $\mathbb{R}\gamma^\prime\subset\mathcal{H}$ with $TM=T\f\oplus\mathcal{H}$, then the statements above are also equivalent to
\begin{enumerate}
    \item [(3)] There exists a transverse affine connection $\nabla\in[\hat{\nabla}]$ such that $\gamma$ is a $\nabla$-geodesic.
\end{enumerate}
\end{proposition}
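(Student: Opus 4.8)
The plan is to establish $(1)\Leftrightarrow(2)$ unconditionally, and then, under the extra hypothesis on $\mathcal{H}$, to close the cycle via $(3)\Rightarrow(1)$ and $(1)\Rightarrow(3)$. For $(1)\Leftrightarrow(2)$, the implication $(2)\Rightarrow(1)$ is immediate since $[\hat\nabla]$ is nonempty. For $(1)\Rightarrow(2)$, recall from Definition \ref{tasdefi2} that any two representatives $\nabla',\nabla\in[\hat\nabla]$ induce the same connection on $\nu\f$, which means precisely that $S:=\nabla-\nabla'$ is a $(1,2)$-tensor taking values in $\Gamma(T\f)$. The standard formula for the difference of the induced covariant derivatives along a curve gives $\left(\nabla_{\gamma'}\gamma'\right)(t)-\left(\nabla'_{\gamma'}\gamma'\right)(t)=S\!\left(\gamma'(t),\gamma'(t)\right)$, which is vertical. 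Hence $\left(\nabla_{\gamma'}\gamma'\right)(t)$ is vertical if and only if $\left(\nabla'_{\gamma'}\gamma'\right)(t)$ is, so being a transverse-geodesic does not depend on the chosen representative, settling the equivalence.

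Now assume $\gamma$ is horizontal for a distribution $\mathcal{H}$ with $TM=T\f\oplus\mathcal{H}$. The implication $(3)\Rightarrow(1)$ is trivial, since a $\nabla$-geodesic satisfies $\left(\nabla_{\gamma'}\gamma'\right)(t)=0$, which is in particular vertical. For the converse $(1)\Rightarrow(3)$, I would fix any representative $\hat\nabla\in[\hat\nabla]$ and pass to the recalibrated connection $\overline\nabla:=\hat\nabla+\Omega$, where $\Omega$ is the recalibration tensor determined by $\hat\nabla$ and $\mathcal{H}$. By Remark \ref{reformation-tensor}, $\Omega$ is a symmetric, vertical-valued $(1,2)$-tensor, so $\overline\nabla$ is again a transverse affine connection with $[\overline\nabla]=[\hat\nabla]$, and its associated O'Neill tensor satisfies $\overline{A}_HH=0$ for every horizontal $H$.

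The idea is then to show that $\overline\nabla_{\gamma'}\gamma'$ is simultaneously vertical and horizontal, hence zero. By the equivalence $(1)\Leftrightarrow(2)$ already proven, applied to $\overline\nabla\in[\hat\nabla]$, the curve $\gamma$ is a $\overline\nabla$-transverse-geodesic, so $\left(\overline\nabla_{\gamma'}\gamma'\right)(t)$ is vertical. On the other hand, since $\gamma'(t)\in\mathcal{H}_{\gamma(t)}$, I claim $\mathcal{V}\!\left(\overline\nabla_{\gamma'}\gamma'\right)(t)=\overline{A}_{\gamma'(t)}\gamma'(t)=0$, which forces $\left(\overline\nabla_{\gamma'}\gamma'\right)(t)$ to be horizontal as well. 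Combining the two conclusions yields $\left(\overline\nabla_{\gamma'}\gamma'\right)(t)=0$ for all $t$, i.e. $\gamma$ is a $\overline\nabla$-geodesic, which is exactly statement $(3)$.

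The main obstacle is justifying the pointwise identity $\mathcal{V}\!\left(\overline\nabla_{\gamma'}\gamma'\right)(t)=\overline{A}_{\gamma'(t)}\gamma'(t)$, because its left-hand side is a covariant derivative along a curve whereas its right-hand side is the value of a $(1,2)$-tensor. I would handle this locally: at a parameter $t_0$ with $\gamma'(t_0)\neq 0$, extend the velocity to a horizontal vector field $H$ near $\gamma(t_0)$ with $H\circ\gamma=\gamma'$ on a neighborhood of $t_0$ (possible for a regular curve), so that $\left(\overline\nabla_{\gamma'}\gamma'\right)(t_0)=\left(\overline\nabla_HH\right)(\gamma(t_0))$; the tensoriality of $\overline{A}$ together with $\mathcal{V}\!\left(\overline\nabla_HH\right)=\overline{A}_HH$ then gives the claim. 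At isolated zeros of $\gamma'$ the identity extends by continuity, and on intervals where $\gamma'\equiv 0$ the curve is contained in a leaf and both sides vanish, so the identity holds throughout $I$. This extension-and-tensoriality step is the only delicate point; everything else is formal manipulation with the defining properties of a transverse affine connection.
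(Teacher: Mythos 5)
Your proof is correct and follows essentially the same route as the paper: $(1)\Leftrightarrow(2)$ via the vertical-valued difference tensor, and $(1)\Rightarrow(3)$ via the recalibration tensor $\Omega$, which the paper uses to compute directly that $\overline{\nabla}_{\gamma'}\gamma'=\nabla'_{\gamma'}\gamma'-\mathcal{V}\bigl(\nabla'_{\gamma'}\gamma'\bigr)=0$. Your reformulation through $\overline{A}_{H}H=0$ and the explicit extension-and-tensoriality argument is just a more careful packaging of the same pointwise identification that the paper's one-line computation uses implicitly.
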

\begin{proof}
    $(2)\implies (1)$ and $(3)\implies (1)$ are trivial.

$(1)\implies(2)$. Given any $\nabla\in[\hat{\nabla}]$, by definition, there is a $(1,2)$-tensor field $T$ taking values on $\Gamma(T\mathcal{F})$ such that $\nabla=\nabla^\prime+T$. We thus have that $\nabla_{\gamma^\prime}\gamma^\prime=\nabla^\prime_{\gamma^\prime}\gamma^\prime+T(\gamma^\prime,\gamma^\prime)$, which is a vertical vector everywhere.

$(1)\implies(3)$. Consider the $(1,2)$-tensor field $\Omega$ given by $\Omega(X,Y)=-\frac{1}{2}\mathcal{V}\left(\nabla^\prime_{\mathcal{H}X}\mathcal{H}Y+\nabla^\prime_{\mathcal{H}Y}\mathcal{H}X\right)$ and set $\nabla=\nabla^\prime+\Omega$. We thus have that
\[
\pushQED{\qed}
\nabla_{\gamma^\prime}\gamma^\prime=\nabla^\prime_{\gamma^\prime}\gamma^\prime-\mathcal{V}\left(\nabla^\prime_{\mathcal{H}\gamma^\prime}\mathcal{H}\gamma^\prime\right)=\nabla^\prime_{\gamma^\prime}\gamma^\prime-\nabla^\prime_{\gamma^\prime}\gamma^\prime=0.\qedhere
\popQED
\]
\let\qed\relax
\end{proof} 

\begin{remark}
For each $t_0\in I$, there is $\varepsilon>0$ such that for $\gamma|_{(t_0-\varepsilon,t_0+\varepsilon)}$ satisfies the property of the existence of a distribution $\mathcal{H}$ with $\mathbb{R}\gamma^\prime|_{(t_0-\varepsilon,t_0+\varepsilon)}\subset\mathcal{H}$ and $TM=T\f\oplus\mathcal{H}$. However, this cannot be warranted globally in every situation, as it may happen that there are $t_0,\ldots, t_q\in I$ such that $\gamma(t_0)=\ldots=\gamma(t_q)$ and $\gamma(t_0),\ldots,\gamma(t_q)$ are all linearly independent in $T_{\gamma(t_0)}M$.
\end{remark}

\begin{definition}
A nowhere vertical smooth curve $\gamma:I\rightarrow M$ is said to be a $[\hat{\nabla}]$-\textit{transverse-geodesic} if any, and hence all, of the statements of Proposition \ref{everything-is-equivalent} hold.
\end{definition}

\begin{corollary}\label{geodesics-will-be-geodesics} If a curve $\gamma:I\rightarrow M$ is a $\nabla$-geodesic tangent to $\mathcal{H}$ for some bundle-like affine structure $(\mathcal{H}, \nabla)$, then $\gamma$ is a nowhere vertical $[\Hat{\nabla}]$-transverse-geodesic, where $[\Hat{\nabla}]$ is the transverse affine structure associated to $(\mathcal{H},\nabla)$.
\end{corollary}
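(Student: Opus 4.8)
The plan is to verify the two defining requirements of a nowhere-vertical $[\hat{\nabla}]$-transverse-geodesic separately: that $\gamma$ is \emph{nowhere vertical}, and that its acceleration with respect to the associated structure is vertical. For the first, I would discard the degenerate case at the outset: if $\gamma$ were constant then $\gamma'\equiv 0\in T\mathcal{F}$ and the conclusion fails trivially, so I take $\gamma$ to be a nonconstant $\nabla$-geodesic. Uniqueness of geodesics then forces $\gamma'(t)\neq 0$ for every $t\in I$. Since by hypothesis $\gamma'(t)\in\mathcal{H}_{\gamma(t)}$ and the splitting $TM=T\mathcal{F}\oplus\mathcal{H}$ gives $\mathcal{H}_{\gamma(t)}\cap T\mathcal{F}_{\gamma(t)}=\{0\}$, a nonzero horizontal vector cannot be vertical; hence $\gamma'(t)\notin T\mathcal{F}_{\gamma(t)}$ for all $t$, i.e. $\gamma$ is nowhere vertical.

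For the acceleration, the property I want to exploit is the one characterizing the associated structure in Proposition \ref{good-new-switcheroo}(i): the representative $\hat{\nabla}$ of $[\hat{\nabla}]$ produced there satisfies $\hat{\nabla}_XY=\mathcal{H}(\nabla_XY)$ for all horizontal fields $X,Y\in\Gamma(\mathcal{H})$. The slightly delicate step is transporting this field-level identity to the covariant derivative \emph{along} $\gamma$. I would fix $t_0\in I$ and, using that $\gamma$ is an embedding near $t_0$ (its velocity being nonzero there), extend $\gamma'$ to a horizontal field $X\in\Gamma(\mathcal{H})$ on a neighborhood of $\gamma(t_0)$ with $X\circ\gamma=\gamma'$ near $t_0$. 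Because covariant differentiation along a curve depends only on the restriction of the field to the curve, this yields $(\hat{\nabla}_{\gamma'}\gamma')(t_0)=(\hat{\nabla}_XX)_{\gamma(t_0)}=\mathcal{H}(\nabla_XX)_{\gamma(t_0)}=\mathcal{H}\big((\nabla_{\gamma'}\gamma')(t_0)\big)$. Since $\gamma$ is a $\nabla$-geodesic, the right-hand side is $\mathcal{H}(0)=0$. As $t_0$ was arbitrary, $\hat{\nabla}_{\gamma'}\gamma'\equiv 0$, which is in particular vertical.

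It remains to package these facts. Having produced a representative $\hat{\nabla}\in[\hat{\nabla}]$ along which $\gamma$ is in fact a genuine $\hat{\nabla}$-geodesic, and noting that $\gamma$ admits the distribution $\mathcal{H}$ with $\mathbb{R}\gamma'\subset\mathcal{H}$ and $TM=T\mathcal{F}\oplus\mathcal{H}$, statement (3) of Proposition \ref{everything-is-equivalent} is satisfied; by the equivalences there, $\gamma$ is a $\nabla'$-transverse-geodesic for every $\nabla'\in[\hat{\nabla}]$. Combined with the nowhere-vertical conclusion, this is precisely the assertion that $\gamma$ is a nowhere-vertical $[\hat{\nabla}]$-transverse-geodesic. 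The main obstacle I anticipate is bookkeeping rather than depth: one must resist the temptation to quote Proposition \ref{everything-is-equivalent} directly for $\nabla$, since the bundle-like connection $\nabla$ need \emph{not} itself be a transverse affine connection (as the flat-connection example illustrates); the genuine work is passing through the representative $\hat{\nabla}$ and justifying the along-the-curve identity via a local horizontal extension of $\gamma'$.
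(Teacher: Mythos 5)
Your proof is correct and follows essentially the same route as the paper, which simply points to the implication $(3)\implies(1)$ of Proposition \ref{everything-is-equivalent}: you exhibit the representative $\hat{\nabla}\in[\hat{\nabla}]$ from Proposition \ref{good-new-switcheroo}(i) along which $\gamma$ is a genuine geodesic and then invoke those equivalences. The extra details you supply --- ruling out the constant case to get ``nowhere vertical,'' and justifying the along-the-curve identity via a local horizontal extension of $\gamma'$ --- are exactly the steps the paper leaves implicit.
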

\begin{proof}
See $(3)\implies(1)$ in the proof of Proposition \ref{everything-is-equivalent}.
\end{proof}

\begin{remark}
    Given a transverse affine connection $\hat{\nabla}$ with partner connection $\omega$, for every $X, Y\in\Gamma(T\mathcal{F})$ we have that $$
\langle X:Y\rangle :=\hat{\nabla}_{X}Y+\hat{\nabla}_{Y}X=\omega_{X}Y+\omega_{Y}X,
    $$which is a vertical vector field. That means that $T\mathcal{F}$ is a $\hat{\nabla}$-invariant distribution, which as per \cite{lewis}, means that every $\hat{\nabla}$-geodesic tangent to $T\mathcal{F}$ at one point remains tangent to $T\mathcal{F}$ at every point. Thus, we conclude that \textit{the set of all transverse geodesics of a transverse affine structure $[\hat{\nabla}]$ is split between those curves everywhere tangent to $T\mathcal{F}$ (which are therefore each contained in some leaf) and those nowhere tangent to $T\mathcal{F}$.} Of course, the latter class has a more interesting geometric meaning, and we shall restrict our attention to them almost exclusively in what follows.
\end{remark}

\begin{remark}\label{local-diffeo-transv-geo}
If we recall the situation of Proposition \ref{push-pull-submersions}, it is that of a foliation $\mathcal{F}$ given by the connected fibers of a submersion $F:M\rightarrow N$, and a transverse affine structure $[\hat{\nabla}]$ on $(M,\f)$ and an affine connection $\nabla$ on $N$ such that one is the pullback/pushforward of the other. If $\gamma$ is a $[\hat{\nabla}]$-transverse-geodesic on $M$, consider $X\in\mathfrak{L}(\f)$ a vector field that, on some open neighborhood satisfies $X\circ\gamma=\gamma^\prime$. Then $$
\nabla_{(F\circ\gamma)^\prime}(F\circ\gamma)^\prime=(\nabla_{F_*X}F_*X)\circ\gamma=F_*((\hat{\nabla}_{X}X)\circ\gamma)=0,
$$which means that $F\circ\gamma$ is a $\nabla$-geodesic. Conversely, if $\beta$ is any $\nabla$-geodesic on $N$, then any lift of $\beta$ on $M$ is a $[\hat{\nabla}]$-transverse-geodesic.

Meanwhile in the situation depicted in Remark \ref{push-pull-local-diffeo}, we ended up with a foliated affine local diffeomorphism $F$ between $(M,\mathcal{F}^M, [\hat{\nabla}^M])$ and $(N,\mathcal{F}^N, [\hat{\nabla}^N])$, with $\mathcal{F}^M$ and $\mathcal{F}^N$ of the same dimension. Since it is true that $[F^*\hat{\nabla}^N]=[\hat{\nabla}^M]$, and that $F_*$ maps $T\mathcal{F}^M$ isomorphically $T\mathcal{F}^N$, it is easy to see that if $\gamma$ is a $\hat{\nabla}^M$-transverse-geodesic on $M$, then $F\circ\gamma$ is a $\hat{\nabla}^N$-transverse-geodesic on $N$. Conversely, it also holds that given any $\hat{\nabla}^N$-transverse-geodesic $\beta$ on $N$, any lift $\alpha$ of $\beta$ on $M$ is a $\hat{\nabla}^M$-transverse-geodesic.
\end{remark}

In classical affine geometry we know that geodesics exhibit a standard existence and uniqueness property: given $v\in TM$ there is a unique maximal geodesic $\gamma_v$ such that $\gamma^\prime(0)=v$. In the transverse scenario we cannot expect that such a strong result will hold given the degeneracy in the vertical directions. However, the next theorem will provide a kind of uniqueness result for transverse-geodesics modulo projection, as two vectors connected by the differential of a holonomy transformation play the role of a single tangent vector over the leaf space $M/\mathcal{F}$.

\begin{lemma}\label{two-points}
Let $L$ be a leaf of $\mathcal{F}$, two distinct points $p,q\in L$ and $h$ be a holonomy transformation of $\mathcal{F}$ between transversals $T_p$ and $T_q$. Then, there is a open set $\mathcal{U}$ containing $p$ and $q$ and submersion $s:\mathcal{U}\rightarrow \mathbb{R}^q$ defining $\mathcal{F}$ locally, such that $p$ and $q$ lie on the same $s$-plaque and satisfying $s\circ h|_T=s|_{T}$, for some subtranversal $T\ni p$ of $T_p$.
\end{lemma}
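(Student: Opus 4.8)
The plan is to realize the given holonomy transformation $h$ as the holonomy along an explicit path in $L$ from $p$ to $q$, thicken that path to an embedded contractible disk inside the leaf, and then promote this disk to a single plaque of a genuine foliated chart $(\mathcal{U},s)$ by transporting a transversal along the disk via holonomy. The relation $s\circ h|_T=s|_T$ will then come essentially for free, since holonomy transport along a path contained in a single plaque cannot leave that plaque.

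First I would fix a smooth path $c:[0,1]\to L$ with $c(0)=p$, $c(1)=q$ whose induced holonomy germ between $T_p$ and $T_q$ is (a representative of) $h$; such a $c$ exists by the very definition of a holonomy transformation as ``sliding along the leaves'' recalled in Section \ref{sec2}. Since $p\neq q$ forces $\dim L\geq 1$, I may take $c$ embedded in the leaf topology and then thicken $c([0,1])$ to a compact, contractible $(n-q)$-dimensional disk $P_0\subset L$ containing $p,q$ and $c([0,1])$ in its interior. The crucial preliminary observation is that, although $L\hookrightarrow M$ is only an immersion, the restriction $P_0\hookrightarrow M$ is an \emph{embedding}: $P_0$ is compact in the leaf topology and $M$ is Hausdorff, so the continuous injection $P_0\to M$ is a homeomorphism onto its image, and being also an immersion, it realizes $P_0$ as an embedded submanifold of $M$.

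Next I would build the chart. Choosing a subtransversal $T\ni p$ of $T_p$, define $\Phi:P_0\times T\to M$ by letting $\Phi(y,x)$ be the holonomy transport of $x$ along the leaf, following any path in $P_0$ from $p$ to $y$. This is well defined precisely because $P_0$ is contractible, so holonomy over $P_0$ is path-independent; moreover $\Phi(\cdot,p)$ is the inclusion $P_0\hookrightarrow M$, and $\Phi(y,\cdot)$ carries $T$ into the transverse direction. A standard tubular-neighborhood argument then shows that $d\Phi$ is an isomorphism along $P_0\times\{p\}$ and, after shrinking $T$ using the compactness of $P_0$, that $\Phi$ is a diffeomorphism onto an open set $\mathcal{U}\supset P_0$. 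Setting $s:=\mathrm{pr}_T\circ\Phi^{-1}:\mathcal{U}\to T\cong\mathbb{R}^q$, the fibers $s^{-1}(x)=\Phi(P_0\times\{x\})$ are connected pieces of leaves, so $s$ is a submersion defining $\mathcal{F}$ on $\mathcal{U}$, and $s^{-1}(p)=P_0$ contains both $p$ and $q$, yielding the ``same plaque'' requirement.

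Finally I would verify $s\circ h|_T=s|_T$, possibly after shrinking $T$ once more. For $x\in T$, the point $h(x)$ is obtained by sliding $x$ along the leaf following $c$; since $c\subset P_0$, this slide is exactly $t\mapsto\Phi(c(t),x)$, which stays inside the single plaque $s^{-1}(x)$ and terminates near $q$. Hence $h(x)$ lies in $\mathcal{U}$ on the plaque $s^{-1}(x)$, so $s(h(x))=x=s(x)$. The main obstacle is the chart-building step: one must confirm that the chain of local flow-boxes along $c$ can be replaced by a \emph{single} submersion chart in which the entire path is one plaque. This is exactly what the contractibility of $P_0$ (triviality of holonomy over a disk) together with the compact-immersion-is-embedding fact make possible, and the one point genuinely requiring care is the injectivity of $\Phi$ on $P_0\times T$ for $T$ sufficiently small.
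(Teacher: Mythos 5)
Your construction is correct in outline but follows a genuinely different technical route from the paper's. The paper works combinatorially: it covers a simple arc $\alpha$ from $p$ to $q$ by a chain of foliated charts $(\mathcal{V}_\lambda,s_\lambda)$ meeting only consecutively, forms iterated ``plaque saturations'' to build $\mathcal{U}$, and defines $s$ by recursively propagating the value of $s_1$ along plaques through the overlaps; well-definedness and the identity $s\circ h|_T=s|_T$ are then read off from the recursion. You instead trivialize the foliation over a compact contractible disk $P_0\subset L$ thickening the arc, via the holonomy-transport map $\Phi:P_0\times T\to M$, and take $s=\mathrm{pr}_T\circ\Phi^{-1}$. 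Your route is conceptually cleaner (it isolates the real mechanism: holonomy over a contractible set is trivial, so a single product chart exists), and the embeddedness of the compact $P_0$ and the tubular-neighborhood injectivity argument are handled correctly. What you should be aware of is that the two hard points you defer --- (a) that for a \emph{fixed} $T$ the transport $\Phi(y,x)$ is defined for all $(y,x)\in P_0\times T$ simultaneously and is path-independent as an actual map, not merely at the level of germs, and (b) the injectivity of $\Phi$ for small $T$ --- are precisely where the same chain-of-charts compactness argument that the paper makes explicit reappears; ``standard'' here still means a Lebesgue-number argument over a finite chain covering $P_0$. Note also that both your proof and the paper's tacitly assume the holonomy transformation $h$ is induced by a \emph{simple} arc in $L$ (the paper says ``simple curve,'' you say ``embedded''); for a general $h$ this requires either justifying that an embedded representative exists in the relevant homotopy class or replacing the disk $P_0$ by a contractible neighborhood of a possibly non-embedded path, so neither proof is strictly more general than the other on this point.
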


\begin{proof}
The transformation $h$ corresponds to some simple curve $\alpha:[0,1]\rightarrow L$ connecting $p$ to $q$. By the compactness of $\alpha[0,1]$ we can finitely cover it by open sets $\mathcal{V}_1,\ldots, \mathcal{V}_k$ with $p\in\mathcal{V}_1, q\in\mathcal{V}_k$ and submersions $s_\lambda:\mathcal{V}_\lambda\rightarrow\mathbb{R}^q, \lambda=1,\ldots, k$, defining $\mathcal{F}$ locally. There is no loss of generality in supposing that $\mathcal{V}_\alpha\cap\mathcal{V}_\beta \neq\emptyset\iff \beta=\alpha\pm1$.

Define $\mathcal{U}_1$ as the ``plaque saturation'' in $\mathcal{V}_1\cup\mathcal{V}_2$ of $\mathcal{V}_1\cap\mathcal{V}_2$. Define $\mathcal{U}_2$ as the plaque saturation in $\mathcal{U}_1\cup\mathcal{V}_3$ of $\mathcal{U}_1\cap\mathcal{V}_3$. Proceed analogously up to $\mathcal{V}_k$ and obtain $\mathcal{U}:=\mathcal{U}_{k-1}$. Define $s:\mathcal{U}\rightarrow \mathbb{R}^q$ by:
$$
s(x)=\begin{cases}
s_1(x),&\text{ if } x\in\mathcal{V}_1,\\
\!\begin{aligned}
s(z), &\text{ for some }z\in\mathcal{V}_{\lambda-1}\setminus\mathcal{V}_{\lambda}\text{ such that }\exists y\in\mathcal{V}_{\lambda-1}\cap\mathcal{V}_\lambda\\&\text{ with } s_\lambda(y)=s_\lambda(x)\text{ and }s_{\lambda-1}(z)=s_{\lambda-1}(y), &\text{ if }x\in\mathcal{V}_\lambda, \lambda>1.   
\end{aligned}
\end{cases}
$$

It is straightforward to check that $s$ is well defined, and its recursive definition ensures that its plaques are unions of the plaques of all of the $s_\lambda$. Moreover, it is clear that it is smooth, locally defines $\mathcal{F}$, and $s(p)=s(q)$. Finally, let $T$ be any subtransversal of $T_p$ such that $T\subset\mathcal{U}$. Since $h$ is precisely defined as sliding along the plaques as one moves along $\alpha$, it is clear that $h(T)\subset\mathcal{U}$ and $h$ preserves the submersion $s$.
\end{proof}

\begin{theorem}
Let $I, J$ be open intervals containing 0, and $\gamma: I\rightarrow M$, $\beta:J\rightarrow M$ two nowhere vertical $[\hat{\nabla}]$-transverse-geodesics. Suppose that there is some holonomy transformation $h:T_p\rightarrow T_q$, where $T_p$ and $T_q$ are transversals respectively at $p$ and $q$ to which $\gamma^\prime(0)$ and $\beta^\prime(0)$ are tangent, such that $dh(\gamma^\prime(0))=\beta^\prime(0)$. Then, there is $\varepsilon>0$ such that $\pi_\mathcal{F}\circ\gamma|_{(-\varepsilon, \varepsilon)}=\pi_\mathcal{F}\circ\beta|_{(-\varepsilon, \varepsilon)}$.
\end{theorem}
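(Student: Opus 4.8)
The plan is to reduce the assertion to the classical uniqueness of geodesics on a local quotient, transported across the leaf by the holonomy $h$. Since $h$ is a holonomy transformation between transversals at $p=\gamma(0)$ and $q=\beta(0)$, these points lie on a common leaf $L$. Assuming first that $p\neq q$, I would invoke Lemma \ref{two-points} to produce an open set $\mathcal{U}\ni p,q$ and a submersion $s\colon\mathcal{U}\to\mathbb{R}^q$ defining $\mathcal{F}$ with $p,q$ on the same $s$-plaque and $s\circ h|_T=s|_T$ on a subtransversal $T\ni p$ of $T_p$ (so that $T_pT=T_pT_p$). Shrinking domains, I may assume $\gamma|_{(-\varepsilon_0,\varepsilon_0)}$ and $\beta|_{(-\varepsilon_0,\varepsilon_0)}$ take values in $\mathcal{U}$. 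The case $p=q$ is analogous, using the loop version of the plaque-chain construction of Lemma \ref{two-points}, or simply a single flow box when the relevant holonomy germ is trivial.

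Next I would push the transverse affine data down to the base. As in the forward direction of Proposition \ref{locally-affine}, condition (3) of Definition \ref{tasdefi1} forces $\partial\hat\Gamma^{c}_{\mu\nu}/\partial x^{i}=0$ for $c\le q$, so the transverse Christoffel symbols $\hat\Gamma^{c}_{ab}$ are basic and descend to Christoffel symbols of a well-defined affine connection $\check\nabla$ on $s(\mathcal{U})\subset\mathbb{R}^q$. Moreover, properties (1) and (2) of Definition \ref{tasdefi1} give that $\hat\Gamma^{c}_{\mu\nu}$ vanishes (in its $c\le q$ component) whenever one lower index is vertical; hence in foliated coordinates the transverse-geodesic equation $\hat\nabla_{\gamma'}\gamma'\in\Gamma(T\mathcal{F})$ collapses to the $\check\nabla$-geodesic equation for $s\circ\gamma$, and likewise for $\beta$ (this is the local content of Remark \ref{local-diffeo-transv-geo}). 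For the initial data, $s(\gamma(0))=s(p)=s(q)=s(\beta(0))$ because $p,q$ share an $s$-plaque; and differentiating $s\circ h=s$ at $p$ yields $ds_q\circ dh_p=ds_p$ on $T_pT_p$, so that, using $dh_p(\gamma'(0))=\beta'(0)$ and $\gamma'(0)\in T_pT_p$, one gets $(s\circ\beta)'(0)=ds_q(\beta'(0))=ds_p(\gamma'(0))=(s\circ\gamma)'(0)$.

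By the standard existence–uniqueness theorem for the second-order geodesic equation of $\check\nabla$, the two base geodesics then coincide on a common interval: $s\circ\gamma=s\circ\beta$ on $(-\varepsilon,\varepsilon)$ for some $0<\varepsilon\le\varepsilon_0$. It remains to upgrade this to $\pi_{\mathcal{F}}\circ\gamma=\pi_{\mathcal{F}}\circ\beta$, i.e.\ to show that $\gamma(t)$ and $\beta(t)$ lie on the same leaf for small $t$. At $t=0$ they share an $s$-plaque, hence a leaf; for $t\neq 0$, equality of $s$-values only places them on the same $s$-\emph{fiber}, whose plaques (connected components) may be distinct. To close this gap I would use the holonomy sliding map $H$ realizing $h$, a leaf-preserving local diffeomorphism defined near $p$ with $H(p)=q$ and $s\circ H=s$: for small $t$, the point $H(\gamma(t))$ lies on the same leaf as $\gamma(t)$ and satisfies $s(H(\gamma(t)))=s(\gamma(t))=s(\beta(t))$, while $H(\gamma(t))$ and $\beta(t)$ both lie in a single flow box about $q$, where equal transverse coordinates force the same plaque and hence the same leaf. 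Transitivity then gives $\pi_{\mathcal{F}}(\gamma(t))=\pi_{\mathcal{F}}(\beta(t))$.

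The main obstacle is precisely this last step. The defining submersion furnished by Lemma \ref{two-points} need not have connected fibers, so passing from equality of transverse coordinates ($s\circ\gamma=s\circ\beta$) to equality of leaves is not automatic: it must be controlled by the plaque-chain geometry along the leaf path realizing $h$, together with a continuity (openness) argument valid only for small $t$. Everything upstream — descending $\check\nabla$, recognizing the projected curves as its geodesics, and matching initial velocities through $dh$ — is a routine transcription of Propositions \ref{locally-affine} and \ref{push-pull-submersions} and the computation in Remark \ref{local-diffeo-transv-geo}.
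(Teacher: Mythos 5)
Your proposal is correct and follows essentially the same route as the paper: invoke Lemma \ref{two-points}, push $[\hat\nabla]$ down through $s$ to an affine connection on $s(\mathcal{U})$, match initial position and velocity via $s\circ h|_T=s|_T$, and conclude by uniqueness of geodesics. The only divergence is your final step: the paper passes directly from $s\circ\gamma=s\circ\beta$ to $\pi_\mathcal{F}\circ\gamma=\pi_\mathcal{F}\circ\beta$, which is in fact justified because the recursive construction of $s$ in Lemma \ref{two-points} forces each fiber of $s$ to be a plaque-chain traced back to a single plaque of $s_1$, hence contained in a single leaf; your alternative justification via the holonomy sliding map $H$ (with $s\circ H=s$ and a flow box about $q$) is a valid, slightly more explicit way to close the same point.
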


\begin{proof}
We summon Lemma \ref{two-points} to obtain a submersion $s:\mathcal{U}\rightarrow\mathbb{R}^q$ defining $\mathcal{F}$ with $\gamma(0), \beta(0)\in\mathcal{U}$. The Lemma also yields that $s\circ h|_T=s|_{T}$, for some subtranversal $T\ni p$ of $T_p$. This means in particular that \begin{equation}\label{equation}
ds_{\gamma(0)}(\gamma^\prime(0))=ds_{\beta(0)}\circ dh_{\gamma(0)}(\gamma^\prime(0))=ds_{\beta(0)}(\beta^\prime(0))=:v. 
\end{equation}
Choose $\varepsilon>0$ such that $\gamma|_{(-\varepsilon, \varepsilon)}$ and $\beta|_{(-\varepsilon, \varepsilon)}$ are contained in $\mathcal{U}$. By Proposition \ref{push-pull-submersions} and Remark \ref{local-diffeo-transv-geo}, $s(\mathcal{U})$ admits an affine connection $\nabla=s_*\hat{\nabla}$ such that $s\circ\gamma|_{(-\varepsilon, \varepsilon)}$ and $s\circ\beta|_{(-\varepsilon, \varepsilon)}$ are $\nabla$-geodesics. But by equation \eqref{equation} both of these projected curves have initial velocity $v$. By the uniqueness of geodesics, it follows that $s\circ\gamma|_{(-\varepsilon, \varepsilon)}=s\circ\beta|_{(-\varepsilon, \varepsilon)}$, and therefore $\pi_\mathcal{F}\circ\gamma|_{(-\varepsilon, \varepsilon)}=\pi_\mathcal{F}\circ\beta|_{(-\varepsilon, \varepsilon)}$.
\end{proof}

\begin{figure}[h]
\centering{
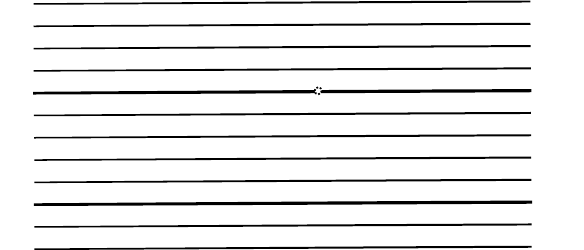
\caption{Geodesics $\gamma$ and $\beta$ have coinciding projection up until they reach leaves $L_-$ and $L_+$,  respectively.}
\label{projecao-das-geodesicas}}
\end{figure}
\subsection{Transverse Jacobi fields}

For a semi-Riemannian submersion $\pi:(M, g)\rightarrow (N, h)$, the equations governing the relation between Jacobi fields on $M$ and on $N$ are found in theorem 2 of \cite{oneill2}, and are given by \begin{align}
\mathcal{V}\left(E^{\prime\prime}+R(E,\gamma^\prime)\gamma^\prime\right)&=\mathcal{V}\left(\mathfrak{D}(E)^\prime\right)+T_{\mathfrak{D}(E)}\gamma^\prime\label{jacobi-submersion1}\\
\mathcal{H}\left(E^{\prime\prime}+R(E,\gamma^\prime)\gamma^\prime\right)&=\left(E_*^{\prime\prime}+R_*(E_*,\sigma^\prime)\sigma^\prime\right)\widetilde{}\ +2A_{\gamma^\prime}(\mathfrak{D}(E))\label{jacobi-submersion2},
\end{align}where $\gamma$ is a horizontal $g$-geodesic, $E\in\mathfrak{X}(\gamma)$, $E_*$ is the projection of $E$ via $d\pi$, $\sigma:=\pi\circ\gamma$ and $\mathfrak{D}(E)=\mathcal{V}((\mathcal{V}E)^\prime)+2A_{\gamma^\prime}\mathcal{H}E-T_{(\mathcal{V}E)}\gamma^\prime$. 

It is our goal in this section to develop a concept of vector fields over curves which would represent an analogue of Jacobi fields on the leaf space $M/\mathcal{F}$, with the particular case where $M/\mathcal{F}=N$ (that is, $\pi$ has connected fibers) being our control scenario. We fix throughout this whole section a nowhere vertical $[\hat{\nabla}]$-transverse-geodesic $\gamma:[a,b]\rightarrow M$ and we will \textit{ad hoc} suppose that $\gamma$ admits a distribution $\mathcal{H}$ such that $\mathbb{R}\gamma^\prime\subset \mathcal{H}$. As usual, we consider the recalibration tensor $\Omega$ defined by $\hat{\nabla}$ and $\mathcal{H}$ and set $\nabla:=\Hat{\nabla}+\Omega$. By Proposition \ref{everything-is-equivalent}, we know that $\gamma$ is a $\nabla$-geodesic.

\begin{remark}\label{remark-on-submersions}
Even though equations \eqref{jacobi-submersion1} and \eqref{jacobi-submersion2} are not \textit{a priori} valid in our setting, note that since $\nabla$ is chosen such that $A_HH=0$, the tensors $A$ and $T$ here still have the same properties of those in the semi-Riemannian submersion setting. We thus have the validity of at least equation \eqref{jacobi-submersion1}, since it does not refer to any objects in the base --- unlike equation \eqref{jacobi-submersion2}.
\end{remark}

Now, suppose $J\in\mathfrak{X}(\gamma)$ is any vector field. We denote by a prime the covariant derivative with respect to $\nabla$, whereas the one with respect to $\Hat{\nabla}$ will be denoted by a dot. A straightforward computation will lead us to:
\begin{align}\label{relation-between-two-jacobi-equations}\begin{split}
\mathcal{V}\left(J^{\prime\prime}+R(J,\gamma^\prime)\gamma^\prime\right)&=\mathcal{V}\left(\ddot J+\Hat{R}(J, \gamma^\prime)\gamma^\prime\right)-2\Omega(\gamma^\prime,[J,\gamma^\prime])\\
\mathcal{H}\left(J^{\prime\prime}+R(J,\gamma^\prime)\gamma^\prime\right)&=\mathcal{H}\left(\ddot J+\Hat{R}(J, \gamma^\prime)\gamma^\prime\right)\end{split}\end{align}

In the case $\mathcal{F}$ is given by the connected fibers of a submersion, it is revealing to compare the second equation above with equation \eqref{jacobi-submersion2}. Doing so will show that:\begin{align}\label{horizontal-part-of-transverse-jacobi-equation}\begin{split}
\mathcal{H}\left(\ddot J+\Hat{R}(J, \gamma^\prime)\gamma^\prime\right)=\left(J_*^{\prime\prime}+R_*(J_*,\sigma^\prime)\sigma^\prime\right)\widetilde{}\ .
\end{split}
\end{align}Therefore $J_*$ is a Jacobi field on $N$ if and only if $J$ is a transverse Jacobi field in the sense of the following definition.

\begin{definition}
Let $\gamma:I\rightarrow M$ be a non-vertical $\Hat{\nabla}$-transverse-geodesic and $J\in\mathfrak{X}(\gamma)$. We say that $J$ is a $\hat{\nabla}$-\textit{transverse Jacobi field} if for every $t\in I$ we have that $$
\hat{\mathcal{J}}(J):=\ddot J(t)+\Hat{R}_{\gamma(t)}\left(J(t), \gamma^\prime(t)\right)\gamma^\prime(t)
$$is a vertical vector.
\end{definition}

If $\tilde{\nabla}\in[\hat{\nabla}]$, then $\hat{\nabla}=\tilde{\nabla}+T$ for some $(1,2)$-tensor $T$ taking values in $T\mathcal{F}$. Therefore, it is easily seen that given a $\hat{\nabla}$-transverse Jacobi field $J$, $\tilde{\mathcal{J}}(J)$ will also be everywhere vertical, which in turn means that being a transverse Jacobi field is a property valued for any representative of the class $[\hat{\nabla}]$. Thus we say that $J$ is a $[\hat{\nabla}]$-transverse Jacobi field when it is a $\tilde{\nabla}$-transverse Jacobi field for some (and hence for any) $\tilde{\nabla}\in[\hat{\nabla}]$.

\begin{remark}\label{consequences-transverse-jacobi-field}
Equations \eqref{relation-between-two-jacobi-equations} show in particular that every Jacobi field is a transverse Jacobi field. Also, if $J$ is a transverse Jacobi field it is clear that given any distribution $\mathcal{H}$ we must have $\mathcal{H}\left(\ddot J+\Hat{R}(J, \gamma^\prime)\gamma^\prime\right)\equiv0$. Thus, in view of equation \eqref{horizontal-part-of-transverse-jacobi-equation}, from now on we understand transverse Jacobi fields as representations of what would be Jacobi fields on $M/\mathcal{F}.$ Moreover, if $V\in\Gamma(T\mathcal{F})$, employing equation \eqref{transverse-curvature-one-vertical-vector}, we have that \begin{align*}
\ddot V+\Hat{R}(V,\gamma^\prime)\gamma^\prime&=\Hat{\nabla}_{\gamma^\prime}\Hat{\nabla}_{\gamma^\prime}V+\left(\mathcal{L}_V\Hat{\nabla}\right)(\gamma^\prime,\gamma^\prime)+\omega_{\cancel{\Hat{\nabla}_{\gamma^\prime}\gamma^\prime}}V-\omega_{\gamma^\prime}\omega_{\gamma^\prime}V\\
&=\cancel{\omega_{\gamma^\prime}\omega_{\gamma^\prime}V}+\left(\mathcal{L}_V\Hat{\nabla}\right)(\gamma^\prime,\gamma^\prime)-\cancel{\omega_{\gamma^\prime}\omega_{\gamma^\prime}V}\\
&=\left(\mathcal{L}_V\Hat{\nabla}\right)(\gamma^\prime,\gamma^\prime),
\end{align*}which is a vertical vector field, by the definition of transverse affine connection. Thus we have that any vertical vector field is a transverse Jacobi field. In particular, if $J$ is a transverse Jacobi field, then $\mathcal{H}J=J-\mathcal{V}J$ is also a transverse Jacobi field.

Equivalently, we can deal with a \textit{normal Jacobi structure} $\overline{J}\in\Gamma(\gamma^*\nu\mathcal{F})$, where $J$ is a transverse Jacobi field on $\gamma$. Recall that $\Hat{\nabla}$ induces a connection $\overline{\nabla}:\mathfrak{X}(M)\times\Gamma(\nu\mathcal{F})\rightarrow \Gamma(\nu\mathcal{F})$, which has its curvature tensor $\overline{R}:\mathfrak{X}(M)\times\mathfrak{X}(M)\times\Gamma(\nu\mathcal{F})\rightarrow \Gamma(\nu\mathcal{F})$ defined by $$
\overline{R}(X,Y)\overline{Z}:=\overline{\hat{R}(X,Y)Z}.
$$
Then, for any $\overline{J}\in\Gamma(\gamma^*\nu\mathcal{F})$, we have that
$$
\overline{\mathcal{J}}(\overline{J})=\overline{\nabla}_{\gamma^\prime}\overline{\nabla}_{\gamma^\prime}\overline{J}+\overline{R}(J,\gamma^\prime)\overline{\gamma^\prime}=\overline{\hat{\nabla}_{\gamma^\prime}\hat{\nabla}_{\gamma^\prime}J+\hat{R}(J,\gamma^\prime)\gamma^\prime}=\overline{\hat{\mathcal{J}}(J)},
$$which means that $\overline{J}$ also satisfies a Jacobi-type equation on $\nu\mathcal{F}$ when $J$ is transverse Jacobi.

If we denote by $\mathcal{J}_\intercal(\gamma)$ the vector space of transverse Jacobi fields along $\gamma$ and by $\mathcal{J}_\nu(\gamma)$ the vector space of normal Jacobi structures on $\gamma$, it is clear by the above remark that $$
\frac{\mathcal{J}_\intercal(\gamma)}{\Gamma(\gamma^*T\mathcal{F})}\cong\mathcal{J}_\nu(\gamma).
$$
When a distribution $\mathcal{H}$ is given, we define $\mathcal{J}_{\intercal,\mathcal{H}}(\gamma):=\{\mathcal{H}J: J\in\mathcal{J}_\intercal(\gamma)\}\subset \mathcal{J}_\intercal(\gamma)$, and note that, by construction we have\begin{align*}
    \mathcal{J}_\intercal(\gamma)&=\mathcal{J}_{\intercal,\mathcal{H}}(\gamma)\oplus \Gamma(\gamma^*T\mathcal{F}),
\end{align*}which means that $\mathcal{J}_{\intercal,\mathcal{H}}(\gamma)\cong\mathcal{J}_\nu(\gamma)$.
\end{remark}
Following O'Neill, we also define the following vector spaces, which will be important in the end of the section: \begin{align*}
\overline{\mathcal{J}}^{a,b}_\intercal(\gamma)&:=\{J\in \overline{\mathcal{J}}_\intercal(\gamma): J(a)\text{ and }J(b)\text{ are vertical}\}\\
\mathcal{J}^{a,b}_{\intercal, \mathcal{H}}(\gamma)&:=\{J\in \mathcal{J}_{\intercal,\mathcal{H}}(\gamma): J(a)\text{ and }J(b)\text{ are zero}\}\\
\mathcal{J}^{v0}_\delta(\gamma)&:=\{J\in\mathcal{J}(\gamma): \mathfrak{D}(J)=0, J(a)\text{ is vertical, and } J(b)=0\}\\
\mathcal{J}^{0v}_\delta(\gamma)&:=\{J\in\mathcal{J}(\gamma): \mathfrak{D}(J)=0, J(b)\text{ is vertical, and } J(a)=0\}\\
\mathcal{J}^{vv}_\delta(\gamma)&:=\{J\in\mathcal{J}(\gamma): \mathfrak{D}(J)=0, J(a)\text{ and } J(b) \text{ is vertical}\}.
\end{align*}

We end this section by showing that with the technology of transverse Jacobi fields we are still able to relate conjugate points ``in the base'' with focal points of leaves, as the classic theory of O'Neill did in the special case of a global submersion.

\begin{proposition}\label{given-transverse-jacobi-field-there-is-jacobi-field}
Let $\gamma:I\rightarrow M$ be a non-vertical $[\Hat{\nabla}]$-transverse-geodesic, $v\in T_{\gamma(a)}\mathcal{F}_{\gamma(a)}$ and $\mathcal{H}$ be any complementary distribution which contains $\mathbb{R}\gamma^\prime$. Let $\nabla$ be an affine connection associated to $[\Hat{\nabla}]$ such that $(\mathcal{H}, \nabla)$ is a bundle-like affine structure. If $J\in\mathfrak{X}(\gamma)$ is any transverse Jacobi field tangent to $\mathcal{H}$ with $J(a)=0$, then there exists a unique Jacobi field $E\in\mathfrak{X}(\gamma)$ such that \begin{enumerate}
        \item $\mathcal{H}E=J$;
        \item $\mathfrak{D}(E)=0$;
        \item $E(a)=v$.
    \end{enumerate}
\end{proposition}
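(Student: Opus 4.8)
The plan is to keep the prescribed horizontal part $\mathcal{H}E=J$ fixed and to \emph{construct} the vertical part $\mathcal{V}E$ so that the resulting field becomes a genuine $\nabla$-Jacobi field. As in the standing conventions of this section I take $\nabla=\hat\nabla+\Omega$ recalibrated so that $A_HH=0$ (Remark \ref{reformation-tensor}), which is precisely what makes $\gamma$ a $\nabla$-geodesic and makes equation \eqref{jacobi-submersion1} available in our setting (Remark \ref{remark-on-submersions}). The guiding observation is that, with $\mathcal{H}E=J$ held fixed, the requirement $\mathfrak{D}(E)=0$ becomes a \emph{linear first-order} ODE for the unknown vertical field $W:=\mathcal{V}E$ along $\gamma$. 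Indeed, since $J$ is horizontal one computes $2A_{\gamma'}\mathcal{H}E=2A_{\gamma'}J=2\mathcal{V}(J')$, a known vertical forcing term, while $T_W\gamma'=\mathcal{V}(\nabla_W\gamma')$ is tensorial (hence algebraic) in $W$, and $\mathcal{V}(W')$ supplies the sole first derivative. Thus $\mathfrak{D}(E)=0$ reads $\mathcal{V}(W')-\mathcal{V}(\nabla_W\gamma')=-2\mathcal{V}(J')$, a linear first-order system with smooth coefficients along $\gamma$.

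Next I would invoke existence and uniqueness for linear ODEs to produce the unique vertical field $W$ along $\gamma$ solving this equation with initial value $W(a)=v$ (recall $v$ is vertical), and set $E:=J+W$. Conditions $(1)$ and $(3)$ are then immediate: $\mathcal{H}E=J$ because $W$ is vertical and $J$ is horizontal, and $E(a)=J(a)+W(a)=0+v=v$ since $J(a)=0$; condition $(2)$, $\mathfrak{D}(E)=0$, holds by construction.

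The substantive step is to verify that $E=J+W$ is in fact a $\nabla$-Jacobi field, i.e.\ that $E''+R(E,\gamma')\gamma'$ vanishes, which I would do by splitting into vertical and horizontal parts. For the vertical part I would use equation \eqref{jacobi-submersion1}, valid here by Remark \ref{remark-on-submersions}: since $\mathfrak{D}(E)=0$, its right-hand side $\mathcal{V}(\mathfrak{D}(E)')+T_{\mathfrak{D}(E)}\gamma'$ vanishes identically. For the horizontal part I would pass to a local submersion chart $s\colon\mathcal{U}\to s(\mathcal{U})$ defining $\mathcal{F}$ -- legitimate because $\mathcal{F}$ is locally given by submersions and the desired vanishing is a pointwise statement along $\gamma$. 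There Proposition \ref{push-pull-submersions} and Remark \ref{local-diffeo-transv-geo} guarantee that $\sigma:=s\circ\gamma$ is a geodesic and, because $J$ is a transverse Jacobi field, that $J_*=s_*J$ is a Jacobi field in the base (via \eqref{horizontal-part-of-transverse-jacobi-equation}). In such a chart equation \eqref{jacobi-submersion2} applies, and with $\mathfrak{D}(E)=0$ and $E_*=s_*E=s_*J=J_*$ its right-hand side reduces to the horizontal lift of $J_*''+R_*(J_*,\sigma')\sigma'$, which vanishes. Hence both parts of the Jacobi operator vanish and $E$ is a Jacobi field.

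Finally, for uniqueness I would argue directly: if $E_1,E_2$ both satisfy $(1)$--$(3)$, then $\Delta:=E_1-E_2$ is vertical (their horizontal parts both equal $J$), and subtracting the two instances of $\mathfrak{D}(E_i)=0$ cancels the common forcing term $2A_{\gamma'}J$, leaving the homogeneous equation $\mathcal{V}(\Delta')-\mathcal{V}(\nabla_\Delta\gamma')=0$ with $\Delta(a)=v-v=0$; ODE uniqueness then forces $\Delta\equiv0$. The main obstacle I anticipate is the careful bookkeeping showing that $\mathfrak{D}(E)=0$ is genuinely first-order linear in $\mathcal{V}E$, so that the single initial condition $W(a)=v$ determines it, together with the justification that O'Neill's identities \eqref{jacobi-submersion1}--\eqref{jacobi-submersion2} may be applied in local submersion charts despite the absence of a global base; once these are secured the verification is essentially formal.
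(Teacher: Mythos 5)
Your proof is correct and follows essentially the same route as the paper's: the paper simply defers to Lemma 1 of \cite{oneill2}, and your argument --- solving the first-order linear ODE $\mathfrak{D}(E)=0$ for the vertical part with initial value $v$, then splitting the Jacobi operator via \eqref{jacobi-submersion1} and (locally) \eqref{jacobi-submersion2}, with uniqueness coming from the homogeneous ODE --- is precisely O'Neill's argument transplanted to the transverse affine setting. The only point worth keeping explicit is the justification that \eqref{jacobi-submersion2} is available in a local submersion chart, which you do flag and which the paper itself relies on when deriving \eqref{horizontal-part-of-transverse-jacobi-equation}.
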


\begin{proofcomment}
This is entirely analogous to the proof of Lemma 1 of \cite{oneill2}, rewritten to the language of transverse affine geometry. That is, we deal with the transverse analogue of whichever object O'Neill considers in the target of the submersion.
\end{proofcomment}

\begin{definition}
Let $\gamma:I\rightarrow M$ be a non vertical $\hat{\nabla}$-transverse-geodesic in $M$. Two points $\gamma(a), \gamma(b)$ for $a\neq b\in I$ are \textit{transversely conjugate along} $\gamma$ if there exists a transverse Jacobi field $J\in\mathfrak{X}(\gamma)$ which is not everywhere vertical but such that $J(a)$ and $J(b)$ are vertical vectors. In the affirmative case, the \textit{transverse conjugacy index} of the pair is $$
\text{Conj}_\intercal(\gamma(a), \gamma(b)):=\text{dim}\left(\overline{\mathcal{J}}^{a,b}_\intercal(\gamma)\right), 
$$where $\overline{\mathcal{J}}^{a,b}_\intercal(\gamma)=\{\overline{J}\in\overline{\mathcal{J}}_\intercal(\gamma):J(a), J(b)$ are vertical.$\}$.
\end{definition}

\begin{lemma}\label{this-is-lemma-2-from-ONEILL}
Let $E\in\mathfrak{X}(\gamma)$. Then $E$ is the solution of $$
\begin{cases}
    E^\prime=A_{\gamma^\prime}E+T_E\gamma^\prime\\
    E(a) \mbox{ is vertical}
\end{cases}
$$ if, and only if $E$ is vertical and $\mathfrak{D}(E)=0$.
\end{lemma}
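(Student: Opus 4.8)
The plan is to rewrite the first-order linear ODE $E'=A_{\gamma'}E+T_E\gamma'$ by splitting it into its $\mathcal{H}$- and $\mathcal{V}$-components, exploiting the algebraic properties of the O'Neill tensors recorded above: that $A_E=A_{\mathcal{H}E}$ and $T_E=T_{\mathcal{V}E}$, and that both $A_E$ and $T_E$ interchange $\mathcal{H}$ and $\mathcal{V}$. Since $\gamma$ is horizontal (because $\mathbb{R}\gamma'\subset\mathcal{H}$), these properties give that $A_{\gamma'}\mathcal{V}E$ is horizontal, that $A_{\gamma'}\mathcal{H}E$ is vertical, and that $T_E\gamma'=T_{\mathcal{V}E}\gamma'$ is vertical. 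Hence the $\mathcal{H}$-part of the ODE reads $\mathcal{H}(E')=A_{\gamma'}\mathcal{V}E$ while the $\mathcal{V}$-part reads $\mathcal{V}(E')=A_{\gamma'}\mathcal{H}E+T_E\gamma'$. I would also record the two identities $\mathcal{H}(\nabla_{\gamma'}\mathcal{V}E)=A_{\gamma'}\mathcal{V}E$ and $\mathcal{V}(\nabla_{\gamma'}\mathcal{H}E)=A_{\gamma'}\mathcal{H}E$, both immediate from the defining formulas for $A$ once $\gamma'$ is horizontal.

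For the forward implication, I would write $E=\mathcal{H}E+\mathcal{V}E$ and substitute into the $\mathcal{H}$-part. Using $\mathcal{H}(\nabla_{\gamma'}\mathcal{V}E)=A_{\gamma'}\mathcal{V}E$, the two copies of $A_{\gamma'}\mathcal{V}E$ cancel and the equation collapses to $\mathcal{H}(\nabla_{\gamma'}\mathcal{H}E)=0$. The key observation is then that $D_tX:=\mathcal{H}(\nabla_{\gamma'}X)$ defines a genuine linear connection on the pullback bundle $\gamma^*\mathcal{H}$ — the Leibniz rule holds because $f'X$ remains horizontal — so $D_t(\mathcal{H}E)=0$ is a homogeneous linear ODE. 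As $E(a)$ is vertical we have $\mathcal{H}E(a)=0$, and uniqueness of solutions forces $\mathcal{H}E\equiv 0$, i.e. $E$ is vertical everywhere. Feeding $\mathcal{H}E=0$ back into the $\mathcal{V}$-part gives $\mathcal{V}(E')=T_E\gamma'$; comparing with $\mathfrak{D}(E)=\mathcal{V}((\mathcal{V}E)')+2A_{\gamma'}\mathcal{H}E-T_{\mathcal{V}E}\gamma'$, which now reduces to $\mathcal{V}(E')-T_E\gamma'$, yields $\mathfrak{D}(E)=0$.

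For the converse, suppose $E$ is vertical with $\mathfrak{D}(E)=0$. Then $\mathcal{H}E=0$, so $\mathfrak{D}(E)=\mathcal{V}(E')-T_E\gamma'=0$ is exactly the $\mathcal{V}$-part of the ODE, while the $\mathcal{H}$-part $\mathcal{H}(E')=A_{\gamma'}E$ holds by applying the identity $\mathcal{H}(\nabla_{\gamma'}\mathcal{V}E)=A_{\gamma'}\mathcal{V}E$ to the vertical field $E$. Since $E(a)$ is trivially vertical, $E$ solves the stated initial value problem. The only genuinely non-formal step — and the one I would treat with most care — is the uniqueness argument: checking that $X\mapsto\mathcal{H}(\nabla_{\gamma'}X)$ is a bona fide covariant derivative along $\gamma$ on $\gamma^*\mathcal{H}$, so that the standard existence-and-uniqueness theorem for linear ODEs applies and the vanishing initial value propagates to $\mathcal{H}E\equiv 0$. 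Everything else is bookkeeping with the $\mathcal{H}/\mathcal{V}$ decomposition, mirroring O'Neill's original Lemma but without appeal to any skew-symmetry of $A$ or $T$, which is unavailable here.
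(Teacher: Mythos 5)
Your proof is correct and follows exactly the route the paper intends: the paper's ``proof'' is only a comment deferring to Lemma 2 of O'Neill's \emph{Submersions and geodesics}, and your argument is precisely that adaptation --- split the equation into $\mathcal{H}$- and $\mathcal{V}$-parts using $A_E=A_{\mathcal{H}E}$, $T_E=T_{\mathcal{V}E}$ and the interchange property, cancel the $A_{\gamma^\prime}\mathcal{V}E$ terms to get the homogeneous linear equation $\mathcal{H}(\nabla_{\gamma^\prime}\mathcal{H}E)=0$, and invoke uniqueness for the induced connection on $\gamma^*\mathcal{H}$. You also rightly flag that no skew-symmetry of $A$ or $T$ is used, which is exactly why the adaptation to the affine setting goes through.
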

\begin{proofcomment}
Again, this is just a relatively straightforward adaption of Lemma 2 of \cite{oneill2} to the transverse affine geometry parlance.
\end{proofcomment}

\begin{corollary}\label{unfolding-of-lemma-2-from-oneill}
If $J\in\mathcal{J}^{0v}_\delta(\gamma)$ [resp. in $\mathcal{J}^{v0}_\delta(\gamma)$] is such that $\mathcal{H}J=0$, then $J$ is 0.
\end{corollary}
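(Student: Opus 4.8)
The plan is as follows. Fix a representative $\nabla\in[\hat\nabla]$ adapted to $\mathcal H$ as in the hypotheses preceding the statement, so that $\gamma$ is a genuine $\nabla$-geodesic and $J$ is an honest $\nabla$-Jacobi field along $\gamma$. The first observation is that the hypothesis $\mathcal HJ=0$ simply says $J=\mathcal VJ$, i.e. $J$ is everywhere vertical. Moreover, by the very definition of $\mathcal J^{0v}_\delta(\gamma)$ (resp. $\mathcal J^{v0}_\delta(\gamma)$) we already have $\mathfrak D(J)=0$, together with the boundary condition $J(a)=0$ (resp. $J(b)=0$). Thus $J$ is a vertical field along $\gamma$ with $\mathfrak D(J)=0$.

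Next I would invoke Lemma \ref{this-is-lemma-2-from-ONEILL}. Its content is precisely that a field $E\in\mathfrak X(\gamma)$ is vertical with $\mathfrak D(E)=0$ if and only if it solves the first-order system $E^\prime=A_{\gamma^\prime}E+T_E\gamma^\prime$ with vertical initial value. Applying the relevant direction to $E=J$, we conclude that $J$ satisfies
\[
J^\prime=A_{\gamma^\prime}J+T_J\gamma^\prime
\]
on all of $I$. The crucial structural point --- and the reason a single boundary condition will be enough --- is that this is a \emph{first-order} linear ODE for $J$: the maps $E\mapsto A_{\gamma^\prime}E$ and $E\mapsto T_E\gamma^\prime$ are $\mathbb R$-linear in their argument and smooth along $\gamma$, so the equation has the form $J^\prime=\mathcal A(t)J$ for a smooth endomorphism field $\mathcal A(t)$ of $T_{\gamma(t)}M$.

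Finally I would invoke uniqueness of solutions of linear ODEs. A first-order linear system is determined by its value at any single point of $I$; in particular, the only solution vanishing at one point is the identically-zero solution. In the case $J\in\mathcal J^{0v}_\delta(\gamma)$ we have $J(a)=0$, and in the case $J\in\mathcal J^{v0}_\delta(\gamma)$ we have $J(b)=0$; either way, uniqueness forces $J\equiv0$, as claimed.

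The only genuinely substantive step is the reduction of the (second-order) transverse Jacobi equation to the first-order system via Lemma \ref{this-is-lemma-2-from-ONEILL}; once that is in hand the conclusion is immediate, since a single vanishing boundary value suffices to kill a solution of a first-order linear equation, in contrast to the two conditions one would need for the full second-order equation. I would therefore regard verifying that $J$ actually lands in the hypotheses of Lemma \ref{this-is-lemma-2-from-ONEILL} --- namely that $\mathcal HJ=0$ together with membership in $\mathcal J^{0v}_\delta$ or $\mathcal J^{v0}_\delta$ really yields ``vertical and $\mathfrak D(J)=0$'' --- as the main (if modest) obstacle.
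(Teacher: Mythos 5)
Your argument is correct and is essentially the paper's own proof: both reduce to the ``if'' direction of Lemma \ref{this-is-lemma-2-from-ONEILL} (verticality of $J$ from $\mathcal{H}J=0$ plus $\mathfrak{D}(J)=0$ gives the homogeneous first-order linear system $J'=A_{\gamma'}J+T_J\gamma'$) and then conclude by uniqueness for first-order linear ODEs from the single vanishing endpoint value. Your write-up merely makes explicit the verification, left implicit in the paper, that the hypotheses of the lemma are met.
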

\begin{proof}
The ``if part'' of Lemma \ref{this-is-lemma-2-from-ONEILL} yields that such $J$ satisfies a homogeneous first order linear equation. Then $J(a)=0$ [resp. $J(b)=0$] implies that $J\equiv 0$.
\end{proof}

\begin{remark}
    Recall that on a semi-Riemannian manifold, a point $\gamma(b)$ is a focal point of a submanifold $P$ along a geodesic $\gamma$ that is normal to $P$ if there exists a nonzero $P$-Jacobi field $J$ along $\gamma$ such that $J(b)=0$. Now, being $P$-Jacobi means that $J(a)\in T_{\gamma(a)}P$ and $\tan_P(J^\prime(a))=-S_{\gamma^\prime(a)}J(a)$, where $S_{\gamma^\prime(a)}:T_{\gamma(a)}P\rightarrow T_{\gamma(a)}P$ is the Weingarten operator of $P$ at $\gamma(a)$ associated with the normal vector $\gamma^\prime(a)$, which is given by $S_{\gamma^\prime(a)}v=-\nabla_v\gamma^\prime$. Even though these notions are not available in our current setting, if $J$ is vertical and $\gamma^\prime$ is tangent to $\mathcal{H}$, we have that $T_J\gamma^\prime=\mathcal{V}(\nabla_J\gamma^\prime)$, and therefore we can say that $J$ is $\mathcal{F}_{\gamma(a)}$-Jacobi if $J(a)\in T_{\gamma(a)}\mathcal{F}_{\gamma(a)}$ and $\mathcal{V}J^\prime(a)=T_{J(a)}\gamma^\prime(a)$, or equivalently $\mathfrak{D}(J)(a)=0$. However, equation \eqref{jacobi-submersion1} implies that $\mathcal{V}(\mathfrak{D}(J))^\prime=-T_{\mathfrak{D}(J)}\gamma^\prime$ and then $(\mathfrak{D}(J))^\prime=-T_{\mathfrak{D}(J)}\gamma^\prime+A_{\gamma^\prime}\mathfrak{D}(J)$, and therefore $\mathfrak{D}(J)$ vanishing at any point means that it is identically zero. Therefore, note that $J$ is a $\mathcal{F}_a$-Jacobi field which vanishes at $b$ if, and only if, $J$ is in $\mathcal{J}^{v0}_\delta(\gamma)$. We thus take the dimension of $\mathcal{J}^{v0}_\delta(\gamma)$ to be the \textit{focal order} of $\gamma(b)$ with respect to $\mathcal{F}_a$ along $\gamma$.
\end{remark}

\begin{remark}\label{transverse-conjugate-points-imply-focal-points}
If $\mathcal{H}$ which contains $\mathbb{R}\gamma^\prime$, then for $E$ in $\mathcal{J}^{vv}_\delta(\gamma)$, (or in $\mathcal{J}^{v0}_\delta(\gamma)$ or in $\mathcal{J}^{0v}_\delta(\gamma)$), we have that $\mathcal{H}E$ is in $\mathcal{J}^{a,b}_{\intercal, \mathcal{H}}(\gamma)$. By Proposition \ref{given-transverse-jacobi-field-there-is-jacobi-field}, this assignment is surjective. Therefore $\mathcal{J}^{a,b}_{\intercal, \mathcal{H}}(\gamma)$ is isomorphic to any of these spaces modulo the kernel of this assignment, which are obviously the vertical Jacobi fields. But, by Corollary \ref{unfolding-of-lemma-2-from-oneill}, this kernel consists only of the zero field, in the case of $\mathcal{J}^{v0}_\delta(\gamma)$ and $\mathcal{J}^{0v}_\delta(\gamma)$. Therefore we have $$
\overline{\mathcal{J}}^{a,b}_\intercal(\gamma)\cong\mathcal{J}^{a,b}_{\intercal, \mathcal{H}}(\gamma)\cong\mathcal{J}^{v0}_\delta(\gamma)\cong\mathcal{J}^{0v}_\delta(\gamma)\cong\frac{\mathcal{J}^{vv}_\delta(\gamma)}{\mathcal{J}_{\delta,v}(\gamma)}.
$$From that we conclude that the following numbers are equal:
\begin{enumerate}
\item The order of $\gamma(b)$ as a focal point of $\mathcal{F}_{\gamma(a)}$ along $\gamma$;
\item The order of $\gamma(a)$ as a focal point of $\mathcal{F}_{\gamma(b)}$ along $\gamma$;
\item The dimension of $\frac{\mathcal{J}^{vv}_\delta(\gamma)}{\mathcal{J}_{\delta,v}(\gamma)}$;
\item The transverse conjugacy index of the pair $\gamma(a), \gamma(b)$ along $\gamma$.
\end{enumerate}
\end{remark}

\section{Main results}\label{sec5}

On an affine manifold $(M, \nabla)$, let $\mathcal{D}\subset TM$ be the maximal domain of the exponential map $\exp =\exp^\nabla$ and let $C\subset \mathcal{D}$ denote a subset of initial velocities of a family of geodesics $\mathcal{C}$ satisfying certain conditions laid out in Ref. \cite{kledilson}, and $\mathcal{Z}\subset TM$ be the image of the zero section.

\begin{theorem}[{\cite[Theorem 3.5]{kledilson}}]\label{pc+disp=proper}
If $\exp|_C$ is a proper map, then the collection $\mathcal{C}$ is pseudoconvex and disprisoning. If $C\cap\mathcal{Z}$ is compact and $C$ is closed in $\mathcal{D}$, then the converse holds.
\end{theorem}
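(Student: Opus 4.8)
The plan is to prove the two implications separately, in each case translating the geometric hypotheses into statements about sequences of initial velocities in $C$ and their geodesics. Write $\pi\colon TM\to M$ for the bundle projection and, for $v\in\mathcal{D}$, let $\gamma_v$ be the geodesic with $\gamma_v'(0)=v$, so $\exp(v)=\gamma_v(1)$ and $\pi(v)=\gamma_v(0)$. Throughout I use the structural conditions on $C$ from the reference, namely that $\mathcal{C}$ is invariant under the geodesic flow and under rescaling of initial velocities. These let me pass between a geodesic segment of $\mathcal{C}$ with endpoints in a compact $K$ and a single vector of $C$: given a segment $\gamma_v|_{[t_1,t_2]}$ with endpoints in $K$, the rescaled velocity $u=(t_2-t_1)\,\gamma_v'(t_1)$ lies in $C$ and satisfies $\pi(u)=\gamma_v(t_1)\in K$ and $\exp(u)=\gamma_v(t_2)\in K$, while $\gamma_u([0,1])=\gamma_v([t_1,t_2])$. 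I shall use this correspondence repeatedly.

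For the first implication I argue by contradiction in both parts. If pseudoconvexity fails for some compact $K$, I obtain segments $\sigma_n$ of geodesics of $\mathcal{C}$ with endpoints in $K$ but containing points $x_n$ escaping every compact set; the correspondence yields $u_n\in C$ with $\exp(u_n)\in K$ and $\sigma_n=\gamma_{u_n}([0,1])$, so properness of $\exp|_C$ gives a subsequence $u_n\to u\in C$, and continuous dependence of geodesics on initial data then confines the $\sigma_n$ to a fixed compact neighborhood of $\gamma_u([0,1])$, contradicting $x_n\to\infty$. For disprisonment, suppose a half-geodesic $\gamma|_{[t_0,b)}$ has compact closure $K_0$. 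Choosing $t_n\to b$ and $\varepsilon_n\to 0$, set $u_n=(1-\varepsilon_n)(b-t_n)\,\gamma'(t_n)\in C$, so that $\exp(u_n)=\gamma\!\left(b-\varepsilon_n(b-t_n)\right)\in K_0$. Properness would force a subsequence $u_n\to u\in C\subset\mathcal{D}$; but the ``full'' vectors $(b-t_n)\gamma'(t_n)$ converge to the same limit and lie outside $\mathcal{D}$ (their image under $\exp$ would be the missing endpoint $\gamma(b)$), so that limit is a boundary point of the open set $\mathcal{D}$, contradicting $u\in\mathcal{D}$.

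For the converse I use the extra hypotheses. Fix compact $K\subset M$ and a sequence $v_n\in C$ with $\exp(v_n)\in K$. Rescaling each $v_n$ down to the zero section and using that $C$ is closed and scale-invariant shows $0_{\pi(v_n)}\in C\cap\mathcal{Z}$; since $C\cap\mathcal{Z}$ is compact, the initial points $\pi(v_n)$ stay in a fixed compact set, so both endpoints of each $\gamma_{v_n}$ lie in a common compact $\hat K$. Pseudoconvexity then confines every segment $\gamma_{v_n}|_{[0,1]}$ to a single compact $K^\ast$. I next claim the $v_n$ are bounded in an auxiliary Riemannian metric: otherwise, normalizing $\hat v_n=v_n/|v_n|$ over $K^\ast$ and passing to a subsequence $\hat v_n\to\hat v$, the reparametrized geodesics $s\mapsto\gamma_{v_n}(s/|v_n|)$ remain in $K^\ast$ over the growing intervals $[0,|v_n|]$ and converge to a half-geodesic $\gamma_{\hat v}|_{[0,\infty)}$ imprisoned in $K^\ast$, contradicting disprisonment. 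Hence $\{v_n\}$ lies in a compact subset of $TM$; a subsequence converges to some $v$, which lies in $C$ because $C$ is closed and satisfies $\exp(v)\in K$ by continuity, so $(\exp|_C)^{-1}(K)$ is compact.

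The main obstacle is the velocity bound in the converse: since an affine connection has no canonical unit speed, I cannot reparametrize to fixed speed, and must instead extract from a sequence of geodesics of unboundedly long affine parameter, all trapped in $K^\ast$, a limiting inextendible half-geodesic imprisoned in $K^\ast$ — this is exactly where disprisonment is decisive and where the limiting argument for the geodesic flow needs the most care. A secondary delicate point, used in both directions, is the bookkeeping at the zero section: the compactness of $C\cap\mathcal{Z}$ together with the closedness of $C$ is what both pins the initial points into a compact set and prevents limiting velocities from degenerating to constant geodesics lying outside $C$ or slipping out of the domain $\mathcal{D}$ of $\exp$.
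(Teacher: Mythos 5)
The paper does not actually prove this statement: it is imported verbatim as \cite[Theorem 3.5]{kledilson}, so there is no internal proof to compare against. Your overall architecture --- converting geodesic segments into single vectors of $C$ via flow/rescaling invariance and running compactness arguments in $TM$ --- is the right one, but two steps have genuine gaps.

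The serious one is the last step of the converse. Having shown $\{v_n\}$ has compact closure in $TM$, you extract $v_n\to v$ and assert $v\in C$ ``because $C$ is closed''. But $C$ is only closed \emph{in} $\mathcal{D}$, and $\mathcal{D}$ is open in $TM$; nothing you have said prevents $v$ from landing on $\partial\mathcal{D}$, in which case $\exp(v)$ is undefined and $(\exp|_C)^{-1}(K)$ fails to be compact. Your closing paragraph attributes the control of this degeneration to the compactness of $C\cap\mathcal{Z}$ and the closedness of $C$, but those hypotheses cannot do the job. The missing idea is one more application of disprisonment: if $v\notin\mathcal{D}$, the maximal forward geodesic $\gamma_v|_{[0,b)}$ with $b\le 1$ satisfies $\gamma_v(s)=\lim_n\exp(sv_n)\in K^\ast$ for every $s<b$, so it is a forward-inextendible geodesic of $\mathcal{C}$ (rescale $v$ slightly so the scaled vector lies in $\mathcal{D}$, hence in $C$ by closedness and scale-invariance) imprisoned in the compact $K^\ast$ --- contradiction. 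The same care is needed in your unboundedness argument: the limit unit vector $\hat v$ need not lie in $\mathcal{D}$ and the limit geodesic need not be defined on $[0,\infty)$; what you actually obtain is a forward-inextendible piece imprisoned in $K^\ast$, which suffices, but you must also verify that this limit geodesic belongs to $\mathcal{C}$. A second, smaller gap is in the forward direction: the disprisonment argument via $u_n=(1-\varepsilon_n)(b-t_n)\gamma'(t_n)$ only makes sense for $b<\infty$; the complete-but-imprisoned case $b=+\infty$ needs the separate (easier) observation that $u_n=n\gamma'(t_0)\in C$ has $\exp(u_n)$ in a fixed compact set while $\{u_n\}$ is unbounded in $TM$, which already contradicts properness.
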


Armed with that result, we go back to our setting and fix, from now on, a general transversely affine manifold $(M,\mathcal{F}, [\Hat{\nabla}])$. We first recall a definition first introduced in \cite{transverse-diameter}.

\begin{definition}\label{def: inextendibility}
Let $\gamma:[a,b)\rightarrow M$ be a continuous curve. We say that a leaf $L\in\mathcal{F}$ is a \textit{transverse limit to the right} for $\gamma$ if for any saturated open set $\mathcal{U}\supset L$ there exists some $s_0\in[a,b)$ such that $s\geq s_0\implies \gamma(s)\in \mathcal{U}$. We say that $\gamma$ is \textit{transversely right-inextendible} if there are no transverse limits to the right for $\gamma$. We analogously define versions to the left for a curve defined on $(a, b]$. Finally, a curve $\gamma:(a,b)\rightarrow M$ is \textit{transversely inextendible} if for some (and hence for any) $c\in (a,b)$ we have both that $\gamma|_{[c,b)}$ is transversely right-inextendible and $\gamma|_{(a,c]}$ is transversely left-inextendible. 
\end{definition}

Note that any curve that is transversely right-inextendible is in particular right-inextendible (in the usual sense), but the converse is not true. Indeed, it is clear that the definition of transverse (right- or left-)inextendibility is equivalent to the requirement that the projection of the curve onto the leaf space $M/\f$ is (right- or left-)inextendible in the usual sense therein. But a curve on $M$ may be inextendible and yet have an extendible projection. With that, we can introduce the transverse analogues of the two key hypotheses in Beem and Parker's version of Hadamard's theorem:

\begin{definition}
Let $\mathcal{C}$ be a family on non-vertical $[\Hat{\nabla}]$-transverse-geodesics. We say that $\mathcal{C}$ is
\begin{itemize}
    \item \textit{transversely pseudoconvex} if for every transversely compact subset $K\subset M$ there is a transversely compact subset $K^*\subset M$ such that if a segment of $\gamma\in\mathcal{C}$ has its endpoints in $K$, then that segment is entirely contained in $K^*$;
    \item \textit{transversely disprisoning} if for any transversely inextendible transverse-geodesic  $\gamma:(a,b)\rightarrow \mathcal{O}$ of a  $(-\infty\leq a<b\leq+\infty)$ in $\mathcal{C}$, and any $t_0\in(a,b)$, neither $\overline{\gamma[t_0,b)}$ nor $\overline{\gamma(a,t_0]}$ is transversely compact. If $\mathcal{C}$ is not transversely disprisoning, then it is said to be \textit{transversely imprisoning}.
\end{itemize}
\end{definition}

Remember from Remark \ref{good-new-switcheroo} that for any complementary distribution $\mathcal{H}$ we obtain a bundle-like affine structure $(\mathcal{H},\nabla)$ associated with $[\Hat{\nabla}]$. We thus define $\mathcal{C}_\mathcal{H}$ as the family of non-constant $\nabla$-geodesics which are tangent to $\mathcal{H}$.

\begin{proposition}\label{pseudoconvexity-rises}
If $(M, \mathcal{F}, [\Hat{\nabla}])$ is transversely pseudoconvex and the leaves of $\mathcal{F}$ are compact, then $\mathcal{C}_\mathcal{H}$ is pseudoconvex.
\end{proposition}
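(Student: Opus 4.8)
The plan is to deduce ordinary pseudoconvexity of $\mathcal{C}_{\mathcal H}$ from the transverse pseudoconvexity of $(M,\mathcal F,[\Hat{\nabla}])$, the only gap being the discrepancy between \emph{transversely compact} sets (those whose image under $\pi_{\mathcal F}$ is a compact subset of $M/\mathcal F$) and genuinely compact sets; closing this gap is exactly where compactness of the leaves enters. First I would record that, by Corollary~\ref{geodesics-will-be-geodesics}, every member of $\mathcal C_{\mathcal H}$ is a nowhere-vertical $[\Hat{\nabla}]$-transverse-geodesic, so $\mathcal C_{\mathcal H}$ is a subfamily of the family of all non-vertical transverse-geodesics to which the transverse-pseudoconvexity hypothesis applies. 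Hence any segment of a curve in $\mathcal C_{\mathcal H}$ is in particular a segment of a curve in that full family, and the containment conclusion of transverse pseudoconvexity is available to us.

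The technical core is the following Key Lemma: if all leaves of $\mathcal F$ are compact (and $M/\mathcal F$ is Hausdorff, as in the ambient setting), then $\pi_{\mathcal F}\colon M\to M/\mathcal F$ is proper, i.e.\ $\pi_{\mathcal F}^{-1}(C)$ is compact for every compact $C\subset M/\mathcal F$. I would prove this from the local structure of compact foliations: a compact leaf $L$ has finite holonomy (Epstein--Millett--Edwards, using Hausdorffness of the leaf space), so by generalized Reeb stability it admits an \emph{open saturated} neighborhood $\mathcal O_L$, modelled on $\tilde L\times_{\mathrm{Hol}(L)}D$ with $\mathrm{Hol}(L)$ finite and $D$ an open transversal disc, which can be chosen with compact closure $\overline{\mathcal O_L}$. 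Since $\pi_{\mathcal F}$ is an open map, the images $\pi_{\mathcal F}(\mathcal O_L)$ form an open cover of the compact set $C$; a finite subcover $\{\pi_{\mathcal F}(\mathcal O_{L_i})\}_{i=1}^{k}$ yields $\pi_{\mathcal F}^{-1}(C)\subset\bigcup_{i=1}^{k}\pi_{\mathcal F}^{-1}\big(\pi_{\mathcal F}(\mathcal O_{L_i})\big)=\bigcup_{i=1}^{k}\mathcal O_{L_i}$, where the last equality uses that each $\mathcal O_{L_i}$ is saturated. Thus $\pi_{\mathcal F}^{-1}(C)$ is contained in the compact set $\bigcup_{i=1}^{k}\overline{\mathcal O_{L_i}}$; being also closed (as $C$ is closed in the Hausdorff space $M/\mathcal F$ and $\pi_{\mathcal F}$ is continuous), it is compact.

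With the Key Lemma in hand the assembly is short. Given a compact $K\subset M$, the image $\pi_{\mathcal F}(K)$ is compact, so $K$ is transversely compact; transverse pseudoconvexity then furnishes a transversely compact $K^{*}\subset M$ containing every segment of a non-vertical transverse-geodesic---in particular of every $\gamma\in\mathcal C_{\mathcal H}$---whose endpoints lie in $K$. I would then set $K^{**}:=\pi_{\mathcal F}^{-1}\big(\pi_{\mathcal F}(K^{*})\big)$. Since $K^{*}$ is transversely compact, $\pi_{\mathcal F}(K^{*})$ is compact, so the Key Lemma makes $K^{**}$ compact; and $K^{*}\subset K^{**}$, so every segment of a curve of $\mathcal C_{\mathcal H}$ with endpoints in $K$ is contained in the compact set $K^{**}$. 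This is precisely ordinary pseudoconvexity of $\mathcal C_{\mathcal H}$.

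The main obstacle is the Key Lemma: properness of $\pi_{\mathcal F}$ is what upgrades a merely transversely compact containing set to a genuinely compact one, and it is the sole place where the hypothesis of compact leaves is indispensable. The delicate point is securing compact saturated neighborhoods of the leaves, which relies on the finiteness of holonomy of compact leaves in a foliation with Hausdorff leaf space; were the leaves non-compact, a segment controlled only transversely could run off to infinity along the leaf directions and escape every compact set, and the implication would break down.
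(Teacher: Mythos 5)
Your argument is correct and follows essentially the same route as the paper's: a compact $K$ is transversely compact, transverse pseudoconvexity supplies a transversely compact $K^*$, compactness of the leaves upgrades this to a genuinely compact containing set, and Corollary~\ref{geodesics-will-be-geodesics} identifies the curves of $\mathcal{C}_{\mathcal H}$ as non-vertical transverse-geodesics. The only difference is that the paper simply asserts that $K^*$ is compact, whereas you justify the upgrade by proving properness of $\pi_{\mathcal F}$ and passing to the saturation $\pi_{\mathcal F}^{-1}(\pi_{\mathcal F}(K^*))$ --- a more careful treatment, which correctly flags that Hausdorffness of $M/\mathcal{F}$ (present in the ambient hypotheses of Theorem~\ref{thrm princ}, though not in the proposition's own statement) is also used at this step.
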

\begin{proof}
If $K$ is a compact subset of $M$, it is in particular transversely compact. Let $K^*$ be the transversely compact subset which contains all of the $\Hat{\nabla}$-transverse-geodesic segments whose endpoints lie in $K$. Since the leaves of $\mathcal{F}$ are compact, it follows that $K^*$ is compact. By Corollary \ref{geodesics-will-be-geodesics} we have that any $\nabla$-geodesic tangent to $\mathcal{H}$ is a $\Hat{\nabla}$-transverse-geodesic, and therefore $\mathcal{C}_\mathcal{H}$ is pseudoconvex.
\end{proof}

\begin{remark}\label{suf-not-nec}
We take a moment to emphasize that the compactness of the leaves of $\mathcal{F}$ is a sufficient but far from necessary hypothesis for the proposition above. To see that, consider for example the simple foliation of $\mathbb{R}^3$ given by the submersion $(t,x,y)\mapsto(t,x)$. The flat connection on $\mathbb{R}^3$ defines a transverse affine structure and also a bundle-like affine structure together with the distribution $\mathcal{H}=\mathbb{R}\frac{\partial}{\partial t}\oplus\mathbb{R}\frac{\partial}{\partial x}$. It is clear that this example is transversely pseudoconvex and $\mathcal{C}_\mathcal{H}$ is pseudoconvex, even though the leaves are homeomorphic to $\mathbb{R}$.   
\end{remark}

\begin{proposition}\label{disprisonment-rises}
If $(M, \mathcal{F}, [\Hat{\nabla}])$ is transversely disprisoning and $M/\mathcal{F}$ is Hausdorff, then $\mathcal{C}_\mathcal{H}$ is disprisoning.    
\end{proposition}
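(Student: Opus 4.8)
The plan is to argue by contradiction, mirroring the structure of the proof of Proposition \ref{pseudoconvexity-rises} but now exploiting the Hausdorffness of $M/\mathcal{F}$ in place of compactness of the leaves. Suppose $\mathcal{C}_\mathcal{H}$ fails to be disprisoning. Then there is a maximal $\nabla$-geodesic $\gamma:(a,b)\to M$ tangent to $\mathcal{H}$ and some $t_0\in(a,b)$ for which, say, $\overline{\gamma[t_0,b)}$ is compact (the left-hand case being symmetric). By Corollary \ref{geodesics-will-be-geodesics}, $\gamma$ is a nowhere-vertical $[\Hat{\nabla}]$-transverse-geodesic, and since $\pi_\mathcal{F}$ is continuous and $M/\mathcal{F}$ is Hausdorff, $\pi_\mathcal{F}\bigl(\overline{\gamma[t_0,b)}\bigr)$ is compact; that is, $\overline{\gamma[t_0,b)}$ is transversely compact. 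The goal is therefore to upgrade $\gamma$ to a \emph{transversely inextendible} transverse-geodesic still possessing a transversely compact half, which will directly contradict transverse disprisonment.

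First I would recall, from the discussion following Definition \ref{def: inextendibility}, that transverse right-inextendibility of $\gamma|_{[t_0,b)}$ is equivalent to $\bar\gamma:=\pi_\mathcal{F}\circ\gamma$ having no transverse limit to the right, i.e. to $\bar\gamma(s)$ failing to converge in $M/\mathcal{F}$ as $s\to b^-$. This splits the argument into two cases. If $\gamma|_{[t_0,b)}$ is already transversely right-inextendible, I would extend $\gamma$ to the left to a transversely left-inextendible transverse-geodesic; the resulting curve is transversely inextendible, belongs to the family of all transverse-geodesics, and has right half equal to $\overline{\gamma[t_0,b)}$, which is transversely compact. This contradicts the hypothesis that $(M,\mathcal{F},[\Hat{\nabla}])$ is transversely disprisoning, closing this case.

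The remaining, and genuinely delicate, case is when $\bar\gamma(s)\to x_0=\pi_\mathcal{F}(L)$ for some leaf $L$ as $s\to b^-$, so that $\gamma$ is transversely right-\emph{extendible} and transverse disprisonment does not apply directly. Here I would pass to a local model: choosing (as in Lemma \ref{two-points}) a submersion $s:\mathcal{U}\to V\subseteq\mathbb{R}^q$ defining $\mathcal{F}$ near $L$ with $s(L)=x_0$, Proposition \ref{push-pull-submersions} equips $V$ with the pushforward connection $\nabla^{V}=s_{*}\Hat{\nabla}$, and by Remark \ref{local-diffeo-transv-geo} the tail $s\circ\gamma$ is a $\nabla^{V}$-geodesic in the genuine affine manifold $V$ converging to the interior point $x_0$, of which the horizontal tail of $\gamma$ is the lift. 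Since horizontal $\nabla$-geodesics stay horizontal and lift and project bijectively onto $\nabla^{V}$-geodesics, $s\circ\gamma$ is itself a \emph{maximal} $\nabla^{V}$-geodesic. The plan is then to show that a maximal geodesic of an affine manifold cannot converge to an interior point: by local existence and uniqueness of $\nabla^{V}$-geodesics one extends $s\circ\gamma$ through $x_0$, and lifting this extension horizontally (again by Remark \ref{local-diffeo-transv-geo}) would prolong $\gamma$ beyond its maximal parameter interval, a contradiction. Hence this case cannot occur, and every such $\gamma$ reduces to the first case.

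The main obstacle is precisely this last step: controlling the behavior of the projected geodesic $s\circ\gamma$ as it approaches $x_0$, so as to guarantee that it genuinely extends through $x_0$ — equivalently, that its velocity neither degenerates nor blows up while its image accumulates at the single interior point $x_0$. This is where the Hausdorffness of $M/\mathcal{F}$ is essential, both to make the limit leaf $L$ and the local affine chart $V$ well defined and to ensure that transverse compactness is detected by compactness of the image in $M/\mathcal{F}$; once the non-convergence of $\bar\gamma$ is secured, the contradiction with transverse disprisonment is immediate.
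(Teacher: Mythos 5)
Your overall reduction --- pass from imprisonment of a horizontal $\nabla$-geodesic to a violation of transverse disprisonment, treating the limit-leaf case separately --- has the right shape, but the step you yourself flag as ``the main obstacle'' is a genuine gap, and it is not a gap that Hausdorffness can fill. In your Case 2 you need the maximal $\nabla^V$-geodesic $s\circ\gamma$ in the local quotient to extend through the interior point $x_0$ to which it converges. Local existence and uniqueness of geodesics extend a geodesic past its final parameter only when its \emph{velocity} has a nonzero limit point in $TV$; knowing only that its \emph{image} accumulates at the single point $x_0$ is not enough, because the velocity may blow up while the curve spirals into $x_0$ in finite affine parameter --- this genuinely happens for smooth affine connections and is exactly the imprisonment phenomenon the hypotheses are meant to exclude. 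Hausdorffness of $M/\mathcal{F}$ gives uniqueness of the limit leaf but exerts no control on $|(s\circ\gamma)'|$, so ``one extends $s\circ\gamma$ through $x_0$'' is unsupported, and with it all of Case 2. There is a secondary gap in Case 1: you cannot in general prolong the maximal geodesic $\gamma$ ``to the left to a transversely left-inextendible transverse-geodesic'' --- maximality is precisely the obstruction, and a maximal curve may still possess a transverse limit to the left --- so the reduction to a transversely inextendible curve needs a different argument (for instance, treating the left end symmetrically to the right one).

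For contrast, the paper never tries to extend the projected geodesic. After using transverse disprisonment to produce a right endleaf $L$, it supposes the imprisoned segment accumulates at two distinct points $p\neq q$ of $L$, reparametrizes the arcs of $\gamma$ from $\gamma(t_k)$ to $\gamma(s_k)$ so that their initial velocities are unit vectors for an auxiliary complete Riemannian metric --- this is the missing velocity control: those normalized initial velocities lie in a compact subset of $TM$ and subconverge to a horizontal unit vector $v$ --- and passes to a limit curve $\beta$ tangent to $\mathcal{H}$. Hausdorffness of $M/\mathcal{F}$ enters only at that point, to show that $\pi_\mathcal{F}\circ\beta$ is constantly equal to the class of $L$, i.e.\ that $\beta$ lies inside the leaf $L$, contradicting the fact that $\beta'$ is a nonzero horizontal vector; hence the imprisoned segment has a unique accumulation point, i.e.\ an endpoint. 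If you wish to keep your ``extend through $x_0$'' strategy, you must import this normalization/compactness argument in $TM$ to rule out velocity blow-up; as written, the proposal does not prove the proposition.
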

\begin{proof}
Let $\gamma:(a,b)\rightarrow M$ ($-\infty\leq a<b\leq +\infty$) be any $\nabla$-geodesic tangent to $\mathcal{H}$ and suppose that for some $t_0\in(a,b)$ either $\gamma(a, t_0]$ or $\gamma[t_0, b)$ are contained in some compact subset $K\subset M$. We shall deal with the case of disprisonment to the right, since the one to the left is entirely analogous. We must show then that $\gamma|_{[t_0,b)}$ is extendible to the right. By Corollary \ref{geodesics-will-be-geodesics}, $\gamma$ is a $\hat{\nabla}$-transverse-geodesic, and therefore the transverse disprisonment condition implies that $\gamma$ has a right endleaf $L$ of $\mathcal{F}$. 

If $(x_k)_{k\in\mathbb{N}}\subset [t_0, b)$ is a sequence converging to $b$, by the compactness of $K$ we have a converging subsequence for $(\gamma(x_k))_{k\in\mathbb{N}}$. If every subsequence converges to the same point $p$, then we would have that $p$ is an endpoint for $\gamma$. So, we suppose that there are at least two subsequences of $(x_k)_{k\in\mathbb{N}}$, say, $t_k$ and $s_k$ converging respectively to distinct points $p$ and $q$ on $L$. There is no loss in generality if we assume that $t_1<s_1<t_2<s_2<\ldots$. Let $h$ be an auxiliary complete Riemannian metric on $M$ and let $\Tilde{\gamma}_k$ be the reparametrization of the restriction of $\gamma_{[t_k, s_k]}$ to an interval $[0, a_k]$ such that $h\left(\Tilde{\gamma}_k^\prime(0),\Tilde{\gamma}_k^\prime(0)\right)=1, \forall k$.

If $v_k=\Tilde{\gamma}_k^\prime(0)$, we have that $\Tilde{\gamma}_k(t)=\exp^\nabla_{\Tilde{\gamma}_k(0)}(tv_k),$ for $t\in[0,a_k]$. Since $\{v_k\}$ lies in a compact, up to subsequence, $v_k$ converges to some $v$ with $h(v,v)=1$ and given that $\mathcal{H}$ is complementary to the tangent bundle of a regular foliation, it is closed, and therefore $v\in\mathcal{H}$. Now, there are two cases to be dealt with.

\noindent\textbf{Case 1.} $(a_k)_{k\in\mathbb{N}}$ is unbounded. Then, we can assume $a_k\rightarrow+\infty$ and therefore, for any $t\in[0,+\infty)$ eventually $t<a_k$, hence $tv_k\in\mathcal{D}$. Therefore $\gamma_k$ converges pointwise to a curve $\tilde{\gamma}$ given by $\tilde{\gamma}(t)=\exp(tv_0)$, which means $\tilde{\gamma}$ is tangent to $\mathcal{H}$.

\noindent\textbf{Case 2.} $(a_k)_{k\in\mathbb{N}}$ is bounded. In this case, we can assume $a_k\rightarrow a$, for some $a\geq0$. However, $a$ cannot be 0, since that would mean that the limit curve would collapse to a point, which cannot happen since we assumed $\gamma(t_k)$ and $\gamma(s_k)$ converge to distinct points. Therefore, $a>0$ and then, for any $t\in[0,a)$, eventually $t<a_k$, and again we conclude that $\tilde{\gamma}$ is tangent to $\mathcal{H}$.

Now, for each $k$ let $\beta_k$ be a reparametrization of $\gamma_k$ to the interval $[0,1]$, and $\beta:[0,1]\rightarrow M$ a reparametrization of $\tilde{\gamma}$. If $\pi_\mathcal{F}:M\rightarrow M/\mathcal{F}$ is the projection, then we have $(\pi_\mathcal{F}\circ\beta_k)(t)\rightarrow (\pi_\mathcal{F}\circ\beta)(t),\forall t\in [0,1]$. On the other hand, if $\Hat{L}$ is the class of the leaf $L$ in $M/\mathcal{F}$, we show that $\pi_\mathcal{F}\circ\beta_k(t)\rightarrow\Hat{L}, \forall t\in[0,1]$. Indeed, given any neighborhood $\Hat{\mathcal{U}}\ni\hat{L}$, $\pi_{\mathcal{F}}^{-1}(\Hat{\mathcal{U}})$ is a saturated neighborhood of $L$. Since $L$ is a right endleaf for $\gamma$, there exists $K\in[t_0,b)$ such that $t\geq K\implies \gamma(t)\in\pi_{\mathcal{F}}^{-1}(\Hat{\mathcal{U}})$. In terms of the sequence $\beta_k $, this means that there is $N\in\mathbb{N}$ such that $k\geq N\implies \mathrm{im}(\beta_k)\subset\pi_{\mathcal{F}}^{-1}(\Hat{\mathcal{U}})$. Since $M/\mathcal{F}$ is Hausdorff, we conclude that $(\pi_\mathcal{F}\circ\beta)(t)=\Hat{L}$, which means that $\beta$ is contained in $L$, but that contradicts the fact that $\beta^\prime\parallel\tilde{\gamma}^\prime\in\mathcal{H}$.
\end{proof}

\begin{theorem}\label{thrm princ}
Let $(M,\mathcal{F})$ be a foliation whose leaves are all compact and such that $M/\mathcal{F}$ is Hausdorff. Suppose $(M,\mathcal{F}$) is endowed with a transverse affine structure $[\Hat{\nabla}]$ such that the family of all non vertical transverse-geodesics is transversely pseudoconvex, transversely disprisoning and without transverse conjugate points. Then the universal cover $\tilde{M}$ of $M$ splits diffeomorphically as a product $ \tilde{L}\times B$, where $B$ is a contractible manifold and $\tilde{L}$ is the universal cover of some leaf $L$ of $\mathcal{F}$.
\end{theorem}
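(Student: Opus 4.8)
The plan is to run the Beem--Parker argument \emph{transversally}, i.e. along the leaf space, and then trivialize the resulting fibration over a contractible base after passing to the universal cover. First I would fix a horizontal distribution $\mathcal H$ with $TM = T\mathcal F \oplus \mathcal H$ and, using Remark \ref{reformation-tensor}, replace the chosen representative by the associated transnormal connection $\nabla := \hat\nabla + \Omega \in [\hat\nabla]$, so that $A_HH = 0$; then every $\nabla$-geodesic tangent to $\mathcal H$ is a nowhere vertical $[\hat\nabla]$-transverse-geodesic (Corollary \ref{geodesics-will-be-geodesics}) and the transnormality property holds. By Propositions \ref{pseudoconvexity-rises} and \ref{disprisonment-rises} (which is exactly where the compactness of the leaves and the Hausdorffness of $M/\mathcal F$ are used) the family $\mathcal C_\mathcal H$ of non-constant such geodesics is pseudoconvex and disprisoning on $M$.

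Next I would set up a transverse exponential map. Fixing $x_0 \in M$ on a leaf $L_0$, consider $E := \pi_\mathcal F \circ \exp^\nabla_{x_0}|_{\mathcal H_{x_0} \cap \mathcal D} : \mathcal H_{x_0} \cap \mathcal D \to M/\mathcal F$, which by transnormality sends each horizontal ray to a transverse-geodesic and hence is well defined into the leaf space. The absence of transverse conjugate points, through the transverse-Jacobi-field correspondence of Section \ref{section: geodesics} (Proposition \ref{given-transverse-jacobi-field-there-is-jacobi-field} together with Remark \ref{transverse-conjugate-points-imply-focal-points}), makes $dE$ everywhere nonsingular, so $E$ is a local diffeomorphism onto the leaf space. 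To upgrade this to a covering I would invoke Theorem \ref{pc+disp=proper}: taking $C$ to be the set of horizontal initial velocities based at $x_0$, so that $C$ is closed in $\mathcal D$ and $C \cap \mathcal Z$ is a single point (hence compact), pseudoconvexity and disprisoning of $\mathcal C_\mathcal H$ force $\exp^\nabla|_C$ to be proper; a proper local diffeomorphism is a covering map, and disprisoning yields surjectivity (transverse geodesic connectedness). Thus $E$ exhibits $\mathbb R^q \cong \mathcal H_{x_0}$ as a covering of the leaf space in the orbifold sense, so $M/\mathcal F$ is a good (developable) affine $q$-orbifold $\mathcal O$ whose orbifold universal cover $B := \widetilde{\mathcal O}$ is the contractible manifold $\mathbb R^q$; this is simultaneously the content of Corollary \ref{corx}.

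Finally I would reconstruct $\tilde M$ from $B$. Since the leaves are compact and $M/\mathcal F = \mathcal O$ is Hausdorff, $\pi_\mathcal F : M \to \mathcal O$ realizes $\mathcal O$ as the leaf space with $M$ a manifold on which the local isotropy acts freely; pulling this back along the universal orbifold covering $B = \widetilde{\mathcal O} \to \mathcal O$ produces a smooth manifold $\hat M := M \times_{\mathcal O} B$ with a genuine fibre bundle $\hat M \to B$ whose fibre is a leaf $L$, while $\hat M \to M$ is an honest covering. As $B$ is contractible the bundle is trivial, $\hat M \cong L \times B$; passing to universal covers and using that $B$ is simply connected gives $\tilde M = \widetilde{\hat M} \cong \tilde L \times B$, with $\tilde{\mathcal F}$ the fibres of the projection onto $B$. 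Equivalently, once $\tilde M \to B$ is known to be a fibration over the contractible $B$, the long exact homotopy sequence forces the fibre to be simply connected, hence equal to $\tilde L$.

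I expect the main obstacle to be the second paragraph: rigorously transferring the Beem--Parker covering conclusion to the transverse/leaf-space setting, that is, controlling $E$ as an orbifold covering in the presence of nontrivial leaf holonomy, and making precise the interaction between Theorem \ref{pc+disp=proper} (stated for an affine manifold) and the horizontal family $\mathcal C_\mathcal H$ on $M$. The descent of the transverse affine structure to $\mathcal O$ and the identification of transverse conjugate points with conjugate points downstairs---both prepared in Sections \ref{sec3} and \ref{section: geodesics}, via Proposition \ref{push-pull-submersions} and Remark \ref{local-diffeo-transv-geo}---are the technical underpinnings that must be assembled carefully before the covering argument can be closed.
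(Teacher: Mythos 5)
Your outline takes a genuinely different route from the paper: you try to run Beem--Parker directly on the leaf space by means of the single-point transverse exponential map $E=\pi_\mathcal{F}\circ\exp^\nabla_{x_0}|_{\mathcal H_{x_0}\cap\mathcal D}$, establish $M/\mathcal F$ as a good orbifold with contractible universal cover $B$, and then reconstruct $\tilde M$ by a fibered product. The paper instead considers $f:=\exp^\nabla|_{\mathcal H|_L}$, the exponential restricted to the \emph{whole} horizontal bundle over a leaf $L$. This is an equidimensional map $\mathcal H|_L\to M$ between honest manifolds, so ``proper local diffeomorphism implies covering map'' applies verbatim (properness coming from Theorem \ref{pc+disp=proper} applied to $C=\mathcal H|_L$, which is closed with $C\cap\mathcal Z\cong L$ compact, and local diffeomorphism coming from the absence of focal points of leaves via Remark \ref{transverse-conjugate-points-imply-focal-points}). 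Pulling $\mathcal H|_L$ back over $\tilde L$ then identifies $\tilde M$ with a vector bundle over $\tilde L$, forces the lifted leaves to be simply connected, and only \emph{then} is the leaf space formed --- upstairs on $\tilde M$, where the holonomy has been killed and $B:=\tilde M/\mathcal F^*$ is a genuine manifold. Contractibility of $B$ is obtained by a second application of the pseudoconvexity/disprisonment machinery to the projected connection $\overline\nabla$ on $B$, not read off from the first covering map.

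The genuine gap in your version is exactly the step you flag as the ``main obstacle'': the claim that $E$ exhibits $\mathcal H_{x_0}\cap\mathcal D$ as an orbifold covering of $M/\mathcal F$, and hence that $M/\mathcal F$ is a good orbifold. Theorem \ref{pc+disp=proper} gives you properness of $\exp|_C$ as a map into $M$, and the Jacobi-field correspondence gives nondegeneracy of $d(\pi_\mathcal{F}\circ\exp)$ away from singular strata; but near a leaf with nontrivial holonomy the leaf space is not a manifold, ``local diffeomorphism'' has no literal meaning, and a proper immersion-like map into an orbifold is not automatically an orbifold covering --- one must show it factors correctly through the local uniformizing charts and their isotropy actions. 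That is precisely the content that would need to be supplied, and it is what the paper's choice of $f$ (defined on the rank-$q$ bundle over the entire $(n-q)$-dimensional leaf, rather than on a single fiber) is engineered to avoid. Your subsequent fibered-product reconstruction $\hat M=M\times_{\mathcal O}B$ also leans on this unproved orbifold-covering statement (connectedness of the pullback, and the identification of its fibers with the generic leaf, both require the holonomy analysis that the paper carries out via Reeb stability and the suspension description of $\mathrm{Tub}(L)$). So the architecture is salvageable --- and indeed your second paragraph is essentially the content of Corollary \ref{corx}, which the paper derives \emph{after} the main theorem rather than as a step toward it --- but as written the covering claim for $E$ is asserted rather than proved, and the theorem does not follow without it.
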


\begin{proof}

Let $\pi:\Tilde{M}\rightarrow M$ be the universal covering map and denote by $\mathcal{F}^*$ the pullback foliation $\pi^*\mathcal{F}$. We will show that $\tilde{M}/\mathcal{F}^*$ is locally Euclidean. Indeed, let $p$ be any point of $\tilde{M}$, denote by $L^\prime$ the leaf of $\mathcal{F}^*$ passing through $p$, and let $L$ be the leaf $\pi(L^\prime)$ on $(M, \mathcal{F})$.
Since $\mathcal{F}$ has compact leaves and $M/\mathcal{F}$ is Hausdorff, it is known that every leaf of $\mathcal{F}$ has finite holonomy (see \cite[Thm. 1.4]{millett}). By Reeb's stability theorem, there exists hence a saturated tubular neighborhood $\mathrm{Tub}(L)\supset L$ of $L$ and a smooth retraction $\chi:\mathrm{Tub}(L)\rightarrow L$, such that $\chi$ restricted to each leaf inside $\mathrm{Tub}(L)$ is a finite covering map. In fact, $\mathrm{Tub}(L)$ can be constructed as the saturation of a local transversal $S_{\pi(p)}$ at $\pi(p)$, taken small enough so that the holonomy group $\mathrm{Hol}_x(L)$
acts on it by diffeomorphisms, which is possible since $\mathrm{Hol}_x(L)$ is finite. In this setting, $\mathrm{Tub}(L)$ is foliated-diffeomorphic to the suspension of the holonomy homomorphism $\pi_1(L)\to \mathrm{Hol}_x(L)$ (see, e.g., \cite[Thm. 2.6]{alex4} for details). Shrinking $S_{\pi(p)}$ if necessary, we can further suppose that $\mathrm{Tub}(L)$ is the union of $\pi$-evenly covered open sets that locally trivialize $\f$. Notice that this ensures that $\mathrm{Tub}(L'):=\pi^{-1}(\mathrm{Tub}(L))$ is a tubular neighborhood of $L'$ with similar properties. In particular, there is a unique smooth map $\chi': \mathrm{Tub}(L') \to L'$ such that the diagram 
$$
\xymatrix{
\mathrm{Tub}(L')\ar[rr]^-{\pi|_{\mathrm{Tub}(L')}}\ar[d]_-{\chi'} & & \mathrm{Tub}(L)\ar[d]^-{\chi}\\
L^\prime\ar[rr]_-{\pi|_{L^\prime}}& & L
}$$
commutes. We therefore conclude that each leaf of $\mathcal{F}^\ast$ inside $\mathrm{Tub}(L')$ covers both $L^\prime$ and $L$.

Now, choose any bundle-like affine structure $(\mathcal{H},\nabla)$ associated with $[\hat{\nabla}]$. Define the map $f$ as the restriction of $\exp^\nabla$ to vectors in $\mathcal{H}$ along $L$, that is $f:=\exp^\nabla_{\mathcal{H}|_L}$. The absence of transverse conjugate points and Remark \ref{transverse-conjugate-points-imply-focal-points} imply that $f$ is a local diffeomorphism. Furthermore, by Propositions \ref{pseudoconvexity-rises} and \ref{disprisonment-rises}, we have that the family $\mathcal{C}_\mathcal{H}$ is pseudo-convex and disprisoning. Since $\mathcal{H}$ is closed and the leaves of $\mathcal{F}$ are compact, we can employ Proposition \ref{pc+disp=proper} to conclude that $f$ is a covering map. Now, consider the universal cover $\varphi: \tilde{L}\rightarrow L$ of $L$, and note that we have a vector bundle $\pi_0:\mathcal{H}|_L\rightarrow L$. If $E$ denotes the pullback bundle $\varphi^*(\mathcal{H}|_L)$, we have that 
$$\xymatrix{
&E\ar@{->}[r]^-{\pi_1}\ar@{->}[d]_-{\pi_2} \ar@{.>}@/_1pc/[dl]_-{\rho} &\tilde{L}\ar@{->}[d]^-{\varphi}\\
M&\mathcal{H}|_L\ar@{->}[r]_-{\pi_0}\ar@{->}[l]^-{f}&L
}$$
commutes, and moreover, that $\pi_2:E\rightarrow\mathcal{H}|_L$ is a covering map, since it has fibers homeomorphic to those of $\varphi$, which are discrete. It follows that $\rho:=f\circ\pi_2:E\rightarrow M$ is a covering map. However, $E$ is a vector bundle over $\tilde{L}$, and $\tilde{L}$ is simply connected, which means that $E$ is simply connected, and therefore it is diffeomorphic to $\tilde{M}$. In fact, given $q\in \rho^{-1}(\pi(p))\cap \tilde{L}$,\footnote{Note that the image of the zero section of $\pi_1:E\rightarrow \tilde{L}$ is diffeomorphic to $\tilde{L}$ and therefore we denote them by the same symbol.} there is a unique diffeomorphism $\Psi:\Tilde{M}\rightarrow E$, such that $\rho\circ\Psi=\pi$ and $\Psi(p)=q$.  Then $\rho(\Psi(L^\prime))=L\implies \Psi(L^\prime)\subset\rho^{-1}(L)$ and, by a connectedness argument, we conclude that $\Psi(L^\prime)\subset \tilde{L}$. By the same token, we see that $\Psi^{-1}(\tilde{L})\subset L^\prime$, which yields that $\Psi$ maps $L^\prime$ diffeomorphically onto $\tilde{L}$.


 Since $L^\prime$ is hence simply connected, it follows that the suspension of $\pi_1(L')\to \mathrm{Hol}_p(L')$ is trivial. Therefore $\mathcal{F}^\ast|_{\mathrm{Tub}(L')}$ is given, say, by the trivial product $\mathrm{Tub}(L')=L'\times S_p$, where $S_p$ is the connected component of $\pi^{-1}(S_{\pi(p)})$ containing $p$. Since $p$ was arbitrary, we thus conclude that the leaf space $\tilde{M}/\mathcal{F}^*$ is locally Euclidean. Moreover, it is easy to see that $M/\mathcal{F}$ being Hausdorff implies that $\tilde{M}/\mathcal{F}^*$ is Hausdorff as well. Since it is always second-countable, we have that $B:=\tilde{M}/\mathcal{F}^\ast$ is a smooth manifold. The projection $\tilde{\pi}:\tilde{M}\rightarrow B$ is thus a submersion.

$$\xymatrix{&\tilde{M}\ar@{->}[rd]^-{\tilde{\pi}}\ar@{->}[ld]_-{\pi}\\
M&&B}$$

Since $\pi$ is a foliated local diffeomorphism and $\mathcal{F}^*$ has the same dimension of $\mathcal{F}$, Remark \ref{push-pull-local-diffeo} allows us to consider the pullback transverse affine connection $\tilde{\nabla}:=\pi^*\hat{\nabla}$ on $(\tilde{M},\mathcal{F}^*)$. Now, employing Remark \ref{push-pull-submersions}, we have that $\tilde{\nabla}$ can be projected via $\tilde{\pi}$, to yield an affine connection $\overline{\nabla}$ on $B$. Note that, since $\pi$ is a local diffeomorphism, the local affine geometry of $(M, \mathcal{F}, [\hat{\nabla}])$ is the same one as the one of $(\tilde{M}, \mathcal{F}^*,[\tilde{\nabla}])$, so that the latter is again without pairs of transverse conjugate points. This, in the light of the discussion which introduced transverse Jacobi fields, yields that $(B, \overline{\nabla})$ does not have any pair of conjugate points. Now, let us see that $(B, \overline{\nabla})$ is pseudoconvex and disprisioning.


\begin{claim}
If $\pi$ is proper, then it is transversely proper.
\end{claim}

Indeed, let $K\subset M$ be transversely compact, and $(\mathcal{U}_\lambda)_{\lambda\in \Lambda}$ be a cover of $\pi^{-1}(K)$ by open sets saturated by the leaves of $\mathcal{F}^*$. Since $\pi$ is proper, it has a finite number of sheets, say $m$. For each $p\in K$, let $\mathcal{V}_p$ be an evenly covered neighborhood of $p$ such that each connected component of $\pi^{-1}(\mathcal{V}_p)$ is contained in some $ \mathcal{U}_i$. If $\hat{\mathcal{V}}_p$ denotes the $\mathcal{F}$-saturation of $\mathcal{V}_p$, the family of all $\hat{\mathcal{V}}_p$ is a saturated cover for $K$, which means that there exists a subfamily $\hat{\mathcal{V}}_1,\ldots, \hat{\mathcal{V}}_l$ which covers $K$. If $\mathcal{W}_i^1,\ldots, \mathcal{W}_i^{m_i}$ ($m_i\leq m$) are the connected components of $\pi^{-1}(\hat{\mathcal{V}}_i)$, then it is clear that each of these sets is $\mathcal{F}^*$-saturated and is contained in some $\mathcal{U}_\lambda$. It follows that the family $\{\mathcal{W}_i^{j_i}:i=1, \ldots, l; j_i=1\ldots, m_i\}$, is a finite cover of $\pi^{-1}(K)$ which refines $(\mathcal{U}_\lambda)_{\lambda\in \Lambda}$. We thus obtained a finite subcover for $(\mathcal{U}_\lambda)_{\lambda\in \Lambda}$, hence $\pi^{-1}(K)$ is transversely compact, establishing the claim.

Let $K\subset B$ be a compact subset. Then $\tilde{\pi}^{-1}(K)$ is transversely compact in $\tilde{M}$ and so is $\pi(\tilde{\pi}^{-1}(K))$ in $M$. Applying the transverse pseudoconvexity of $(M, \mathcal{F}, [\Hat{\nabla}])$ we obtain a transversely compact $K^*\subset M$ such that every $\hat{\nabla}$-geodesic with endpoints in $\pi(\tilde{\pi}^{-1}(K))$ is contained in $K^*$. Since $\pi$ is proper, by the previous claim it is transversely proper, and thus we have that $\pi^{-1}(K^*)$ is transversely compact, hence $\tilde{\pi}(\pi^{-1}(K^*))$ will be compact in $B$. Let $\gamma:[a,b]\rightarrow B$ be a $\overline{\nabla}$-geodesic in $B$ with $\gamma(a), \gamma(b)\in K$ and suppose that, for some $t_0\in(a,b)$, it verifies $\gamma(t_0)\notin \tilde{\pi}(\pi^{-1}(K^*))$. Now, if $\tilde{\gamma}$ is any lift of $\gamma$ to $\tilde{M}$, by Remark \ref{local-diffeo-transv-geo} it follows that $\tilde{\gamma}$ is a $[\tilde{\nabla}]$-transverse-geodesic, and thus $\pi\circ\tilde{\gamma}$ is a $[\hat{\nabla}]$-transverse-geodesic. On the other hand, $$
\gamma(t_0)\notin \tilde{\pi}(\pi^{-1}(K^*))\implies \tilde{\gamma}(t_0)\notin\pi^{-1}(K^*)\implies \pi\circ\tilde{\gamma}(t_0)\notin K^*,
$$which is a contradiction. Therefore pseudoconvexity holds on $(B,\overline{\nabla})$. For disprisonment, let $\gamma:(a,b)\rightarrow B$ be an inextendible $\overline{\nabla}$-geodesic on $B$ and suppose there is $t_0\in(a,b)$ and a compact $K\subset M$ such that $\gamma[t_0,b)\subset K$. Therefore, if $\tilde{\gamma}$ is any $\tilde{\nabla}$-transverse-geodesic which projects to $\gamma$, then $\tilde{\gamma}[t_0,b)$ is contained in the transversely compact $\tilde{\pi}^{-1}(K)$, and in the same manner, the $\hat{\nabla}$-transverse geodesic $\hat{\gamma}:=\pi(\tilde{\gamma})$ is such that $\hat{\gamma}[t_0,b)$ is contained in the transversely compact $\pi(\tilde{\pi}^{-1}(K))\subset M$. By the transverse disprisonment of $(M,\mathcal{F}, [\hat{\nabla}])$ it follows that $\hat{\gamma}$ must have a right endleaf $L\in\mathcal{F}$. Then, it is easy to see that some leaf $L^\prime\in\mathcal{F}^*$ with $\pi(L^\prime)=L$ will be a right endleaf of $\tilde{\gamma}$. Finally, since $\tilde{\pi}$ is a submersion, it follows that it maps $L^\prime$ to a right endpoint of $\gamma$, contradicting the assumption of its inextendibility.

Finally, Theorem \ref{pc+disp=proper} applied to the family of all geodesics of $(B,\overline{\nabla})$ implies that, for any $x\in B$, $\exp_x$ is proper. Since there are no conjugate points, it is also a local diffeomorphism. We therefore obtain that $\exp_x$ is a covering map. However, since $\tilde{M}$ is simply connected and $\tilde{\pi}$ has connected fibers, it follows from the fibration homotopy sequence
$$\cdots \to \pi_1(\tilde{L}) \to \pi_1(\tilde{M})\to \pi_1(B)\to \pi_0(\tilde{L})\to \cdots$$
that $B$ is simply connected. Thus, $\exp_x$ is the universal cover, meaning that $B$ is contractible, which in turn implies that the fiber bundle $(\Tilde{M}, \tilde{\pi}, B, \tilde{L})$ is trivial, that is, $\tilde{M}=\tilde{L}\times B$.
\end{proof}

We recall that a foliation $(M,\mathcal{F})$ is called \textit{developable} when there is some smooth covering map $p:M^\prime\rightarrow M$ such that the pullback foliation $p^*\mathcal{F}$ on $M^\prime$ is a simple foliation (i.e., given by connected fibers of a submersion). In this case we say that $p^*\mathcal{F}$ is a \textit{development} of $\f$, or that $\f$ \textit{develops} to it. From what we saw in the previous proof, Theorem \ref{thrm princ} can be alternatively restated with this terminology:

\begin{corollary}
Let $(M,\mathcal{F})$ be a foliation whose leaves are all compact and such that $M/\mathcal{F}$ is Hausdorff. Suppose $(M,\mathcal{F})$ is endowed with a transverse affine structure $[\Hat{\nabla}]$ such that the family of all non vertical transverse-geodesics is transversely pseudoconvex, transversely disprisoning and without transverse conjugate points. Then $\mathcal{F}$ develops, on the universal covering $\tilde{M}$ of $M$, to a simple foliation over a contractible manifold $B$.
\end{corollary}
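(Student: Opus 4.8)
The plan is to recognize that this statement merely recasts Theorem \ref{thrm princ}, together with the constructions carried out in its proof, in the vocabulary of developable foliations; thus almost all the work is already done, and the real task is to check that those constructions literally match the definition of a development. First I would take as the covering map in the definition of developability the universal covering map $\pi:\tilde{M}\to M$ itself, which is a smooth covering map. The associated pullback foliation $\pi^*\mathcal{F}=\mathcal{F}^*$ is then a foliation on $\tilde{M}$ of the same dimension as $\mathcal{F}$, precisely the one already considered in the proof of Theorem \ref{thrm princ}. It therefore remains only to exhibit $\mathcal{F}^*$ as a simple foliation over a contractible base.

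Next I would invoke the intermediate conclusions established in that proof. There it is shown that $B:=\tilde{M}/\mathcal{F}^*$ is locally Euclidean, Hausdorff and second-countable, hence a smooth manifold, and that the quotient projection $\tilde{\pi}:\tilde{M}\to B$ is a submersion whose fibers are the leaves of $\mathcal{F}^*$. Since each such leaf is diffeomorphic to $\tilde{L}$, the universal cover of a leaf $L$ of $\mathcal{F}$, these fibers are connected. Consequently $\mathcal{F}^*$ coincides with the simple foliation defined by the connected fibers of $\tilde{\pi}$, which is exactly what it means for $\mathcal{F}$ to develop to $\mathcal{F}^*$. The contractibility of $B$ is likewise part of the proof of Theorem \ref{thrm princ}: the exponential map of the projected connection $\overline{\nabla}$ at any point of $B$ is shown to be a covering map, and $B$ is simply connected (via the fibration homotopy sequence for $\tilde{L}\hookrightarrow\tilde{M}\to B$ together with the simple connectedness of $\tilde{M}$), so that exponential map is in fact the universal cover and $B$ is contractible.

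The only point requiring genuine care --- and hence the step I would treat as the main obstacle --- is verifying that the leaves of $\mathcal{F}^*$ are \emph{exactly} the fibers of $\tilde{\pi}$, so that $\mathcal{F}^*$ is genuinely simple with connected fibers, rather than merely tangent to a fibration with possibly disconnected fibers. This is guaranteed by the tubular-neighborhood analysis in the proof of Theorem \ref{thrm princ}, where the diffeomorphism $\Psi$ is shown to carry each leaf $L'$ of $\mathcal{F}^*$ onto the simply connected $\tilde{L}$, trivializing $\mathcal{F}^*$ over $\mathrm{Tub}(L')$ as a product. With this identification of leaves with connected fibers in hand, the corollary follows at once.
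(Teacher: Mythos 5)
Your proposal is correct and matches the paper's intent exactly: the paper offers no separate argument beyond noting that the corollary is a restatement of Theorem \ref{thrm princ} in the language of developable foliations, and you correctly identify the covering map $\pi:\tilde{M}\to M$, the pullback foliation $\mathcal{F}^*$, and the fact that the proof of the main theorem already exhibits $\tilde{\pi}:\tilde{M}\to B$ as a submersion onto a contractible manifold whose connected fibers are the leaves of $\mathcal{F}^*$. (Your flagged ``point requiring care'' is in fact automatic, since $\tilde{\pi}$ is by construction the projection onto the leaf space and leaves are connected by definition, but raising it does no harm.)
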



As announced in the Introduction, an interesting and important corollary of the results in this paper is applicable to an \textit{affine orbifold} $(\mathcal{O},\nabla)$, that is, an orbifold endowed with a \textit{cone connection} (for a detailed discussion, see \cite[Section 2.2]{daza}). When dealing with orbifolds, the notion of a covering map is slightly more involved than usual (see \cite[Section 2.3]{caramello3}), since it has to take eventual singularities into account. Even so, we can still define an orbifold $\mathcal{O}$ to be \textit{developable} if there is an orbicover $p:\mathcal{O}^\prime\rightarrow\mathcal{O}$ such that $\mathcal{O}^\prime$ is a manifold. It is clear that being developable in this sense is equivalent as the foliation which realizes it as a leaf space being developable in the sense mentioned above. The definitions of pseudoconvexity and disprisonment for families of geodesics thereon are identical. As mentioned above, the orbifold $\mathcal{O}$ can be realized as a leaf space $M/\mathcal{F}$ for a foliation $\mathcal{F}$ of finite holonomy and compact leaves, and we have a map $\pi:M\rightarrow\mathcal{O}$ which is a submersion in the sense of orbifolds (see, e.g., \cite{caramello3}). A construction entirely analogous to that of Proposition \ref{push-pull-submersions} will show that $\nabla$ induces a transverse affine connection $\hat{\nabla}$ on $M$, and the reasoning of Remark \ref{local-diffeo-transv-geo} can be extended to orbifolds to yield that $\pi$ projects $\hat{\nabla}$-transverse-geodesics onto $\nabla$-geodesics. One can thus prove that the pseudoconvexity and disprisonment of $\nabla$ implies the transverse pseudoconvexivity and transverse disprisonment of $\hat{\nabla}$. Analogously to the theory developed in Section 5, we have that $\hat{\nabla}$-transverse Jacobi fields are mapped by $d\pi$ into Jacobi fields of $(\mathcal{O},\nabla)$ and, therefore, the absence of conjugate points in $(\mathcal{O},\nabla)$ imply the absence of transverse conjugate points in $(M,\mathcal{F},[\hat{\nabla}])$. We have thus established Corollary \ref{corx}, whose statement we repeat here:  

\begin{corollary}[Affine Hadamard theorem for orbifolds]
Let $(\mathcal{O},\nabla)$ be an affine orbifold without conjugate points and such that the family of its geodesics is pseudoconvex and disprisoning. Then $\mathcal{O}$ is a good (developable) orbifold and whose universal covering is a contractible manifold.
\end{corollary}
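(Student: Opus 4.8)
The plan is to deduce this statement from the main Theorem \ref{thrm princ} by realizing the affine orbifold $(\mathcal{O},\nabla)$ as the leaf space of a foliation equipped with a transverse affine structure, and then transporting all of the hypotheses across this realization. First I would invoke the standard fact (see \cite{caramello3}) that any orbifold $\mathcal{O}$ is the leaf space $M/\mathcal{F}$ of a foliation $\mathcal{F}$ with compact leaves and finite holonomy whose leaf space is Hausdorff; concretely one may take $M$ to be an orthonormal frame bundle of an auxiliary Riemannian orbifold metric, on which a compact Lie group acts with finite stabilizers, with $\mathcal{F}$ the orbit foliation and $\pi:M\to\mathcal{O}$ the canonical orbifold submersion. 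This places us squarely in the foliation-theoretic hypotheses of Theorem \ref{thrm princ}.

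Next I would transport the affine data. By a construction parallel to Proposition \ref{push-pull-submersions}(b), the cone connection $\nabla$ pulls back along $\pi$ to a transverse affine structure $[\hat{\nabla}]$ on $(M,\mathcal{F})$, and the orbifold analogue of Remark \ref{local-diffeo-transv-geo} shows that $\pi$ carries $[\hat{\nabla}]$-transverse-geodesics to $\nabla$-geodesics and lifts every $\nabla$-geodesic to a transverse-geodesic. With this dictionary, the three remaining hypotheses transfer directly. Pseudoconvexity of the $\nabla$-geodesics yields transverse pseudoconvexity, since a transverse-geodesic segment with endpoints in a transversely compact set projects to a geodesic segment with endpoints in a compact set of $\mathcal{O}$, which is confined to a compact set whose $\pi$-saturated preimage is transversely compact. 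Disprisonment of $\nabla$ yields transverse disprisonment, because a transversely imprisoned transverse-geodesic would project to an imprisoned geodesic in $\mathcal{O}$. Finally, the absence of conjugate points in $(\mathcal{O},\nabla)$ forces the absence of transverse conjugate points, since $d\pi$ sends $\hat{\nabla}$-transverse Jacobi fields to $\nabla$-Jacobi fields, exactly as in the discussion culminating in Remark \ref{transverse-conjugate-points-imply-focal-points}.

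Having verified every hypothesis, I would apply Theorem \ref{thrm princ} to obtain a diffeomorphic splitting $\tilde{M}=\tilde{L}\times B$ of the universal cover of $M$, with $B=\tilde{M}/\mathcal{F}^*$ a contractible manifold. It then remains to read off the orbifold conclusions. The covering $\pi:\tilde{M}\to M$ is foliated and induces an orbifold covering $B=\tilde{M}/\mathcal{F}^*\to M/\mathcal{F}=\mathcal{O}$; since $B$ is simply connected (indeed contractible) and a genuine manifold, it is the universal orbifold cover of $\mathcal{O}$. Consequently $\mathcal{O}$ is developable, hence good, and its universal cover is the contractible manifold $B$, as claimed.

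I expect the main obstacle to lie in the second step, namely making the orbifold-to-foliation dictionary rigorous over the singular locus of $\mathcal{O}$. Away from the singular strata $\pi$ is an honest submersion and the cited Propositions and Remarks apply verbatim; but over the singular strata the fibers of $\pi$ carry nontrivial finite isotropy, so one must check that the pullback of $\nabla$ remains holonomy-invariant and that the orbifold notions of geodesic, Jacobi field, and conjugate point match the transverse notions of Section \ref{section: geodesics} under the local finite-group quotients. This is precisely where the finiteness of the holonomy groups is essential, as it guarantees that the local models are quotients of the manifold picture by finite groups and that all the relevant constructions descend and lift compatibly.
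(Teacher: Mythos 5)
Your proposal follows essentially the same route as the paper: realize $(\mathcal{O},\nabla)$ as the leaf space of a compact-leaf, finite-holonomy foliation $(M,\mathcal{F})$ via the orbifold submersion $\pi$, pull back $\nabla$ to a transverse affine structure by the analogue of Proposition \ref{push-pull-submersions}, transfer pseudoconvexity, disprisonment and absence of conjugate points to their transverse counterparts, and apply Theorem \ref{thrm princ}. Your closing remarks on reading off the universal orbifold cover from $B=\tilde{M}/\mathcal{F}^*$ and on the care needed over the singular locus are in fact more explicit than the paper's own (rather terse) treatment, but they do not constitute a different argument.
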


\section*{Acknowledgements}
IPCS is partially supported by the project PID2020-118452GBI00 of the Spanish government. HPM was financed in part by the Coordenação de Aperfeiçoamento de Pessoal de Nível Superior - Brasil (CAPES) - Finance Code 001.

\end{document}